\newcounter{spec}
\newenvironment{thlist}{\begin{list}{\rm{(\roman{spec})}}%
{\usecounter{spec}\labelwidth=20pt\itemindent=0pt\labelsep=10pt}}%
{\end{list}}
\newcommand{\surj}{\rightarrow\!\!\!\!\!\rightarrow}
\newcommand{\Surj}{\relbar\joinrel\surj}
\renewcommand{\P}{{\mathbf P}}
\newcommand{\sA}{{\mathcal A}}
\newcommand{\sM}{{\mathcal M}}
\newcommand{\sO}{{\mathcal O}}
\newcommand{\bH}{\mathbb{H}}
\newcommand{\car}{\operatorname{car}}
\newcommand{\R}{{\mathbb R}}
\newcommand{\Z}{{\mathbb Z}}
\newcommand{\Q}{{\mathbb Q}}
\newcommand{\C}{{\mathbb C}}
\renewcommand{\H}{{\mathcal H}}
\newcommand{\ovF}{{\overline {\F}}}
\newcommand{\ovV}{{\overline V}}
\newcommand{\ovX}{{\overline X}}
\newcommand{\K}{{\mathcal K}}
\newcommand{\cl}{{\operatorname{cl}}}
\newcommand{\Br}{{\operatorname{Br}}}
\newcommand{\Ker}{{\operatorname{Ker}}}
\newcommand{\Coker}{{\operatorname{Coker}}}
\newcommand{\Hom}{{\operatorname{Hom}}}
\newcommand{\Spec}{{\operatorname{Spec \ }}}
\newcommand{\prodr}{\operatornamewithlimits{\prod\!\!\!\!\!\!\!\!\coprod}}
\newcommand{\num}{{\operatorname{num}}}
\newcommand{\nr}{{\operatorname{nr}}}
\renewcommand{\div}{{\operatorname{div}}}
\newcommand{\tors}{{\operatorname{tors}}}
\newcommand{\cd}{{\operatorname{cd}}}
\newcommand{\eff}{{\operatorname{eff}}}
\newcommand{\inv}{{\operatorname{inv}}}
\renewcommand{\o}{{\operatorname{o}}}
\newcommand{\Tate}{{\operatorname{Tate}}}
\newcommand{\by}[1]{\overset{#1}{\longrightarrow}}
\newcommand{\iso}{\by{\sim}}
\renewcommand{\lim}{\varprojlim}
\newcommand{\colim}{\varinjlim}
\renewcommand{\phi}{\varphi}
\numberwithin{equation}{section}
\newcommand{\cqfd}
{%
\mbox{}%
\nolinebreak%
\hfill%
\rule{2mm}{2mm}%
\medbreak%
\par%
}
\newfont{\gothic}{eufb10}
\renewcommand{\qed}{{\hfill$\square$}}
\newtheorem{theo}{Th\'{e}or\`{e}me}[section]
\newtheorem{prop}[theo]{Proposition}
\newtheorem{conj}[theo]{Conjecture}
\newtheorem{lem}[theo]{Lemme}
\newtheorem{cor}[theo]{Corollaire}
\theoremstyle{definition}
\newtheorem{defi}[theo]{D\'efinition}
\theoremstyle{remark}
\newtheorem{rema}[theo]{Remarque}
\newtheorem{qn}[theo]{Question}
\newtheorem{propr}[theo]{Propri\'et\'es}
\newcommand{\bthe}{\begin{theo}}
\newcommand{\ble}{\begin{lem}}
\newcommand{\bpr}{\begin{prop}}
\newcommand{\bco}{\begin{cor}}
\newcommand{\bde}{\begin{defi}}
\newcommand{\ethe}{\end{theo}}
\newcommand{\ele}{\end{lem}}
\newcommand{\epr}{\end{prop}}
\newcommand{\eco}{\end{cor}}
\newcommand{\ede}{\end{defi}}
\newcommand{\Gal}{{\rm{Gal}}}
\newcommand{\et}{   {\operatorname{\acute{e}t}   }   }
\newcommand{\Zar}{{\operatorname{Zar}}}
\newcommand{\Pic}{\operatorname{Pic}}
\newcommand{\F}{{\mathbb F}}
\DeclareFontFamily{U}{wncy}{}
\DeclareFontShape{U}{wncy}{m}{n}{%
<5>wncyr5%
<6>wncyr6%
<7>wncyr7%
<8>wncyr8%
<9>wncyr9%
<10>wncyr10%
<11>wncyr10%
<12>wncyr6%
<14>wncyr7%
<17>wncyr8%
<20>wncyr10%
<25>wncyr10}{}
\DeclareMathAlphabet{\cyr}{U}{wncy}{m}{n}
\begin{document}

\title[Cycles de codimension $2$ et $H^3$ non ramifi\'e]{Cycles de codimension 2 et $H^3$ non ramifi\'e pour les vari\'et\'es sur les corps finis}
\author{Jean-Louis Colliot-Th\'el\`ene}
\address{C.N.R.S., Universit\'e Paris Sud\\Math\'ematiques, B\^atiment 425\\91405 Orsay Cedex\\France}
\email{jlct@math.u-psud.fr}
\author{Bruno Kahn}
\address{Institut de Math\'ematiques de Jussieu\\UMR 7586\\ Case 247\\4 place
Jussieu\\75252 Paris Cedex 05\\France}
\email{kahn@math.jussieu.fr}
\date{Soumis au Journal of K-Theory, 28  janvier 2012; version r\'evis\'ee  soumise  le 7 juin 2012 et  accept\'ee le 23 juillet 2012}
\maketitle

 \begin{abstract}
  Soit $X$ une vari\'et\'e projective et lisse sur un corps fini $\F$. On s'int\'eresse
au groupe de cohomologie non ramifi\'ee $H^3_\nr(X,\Q/\Z(2))$.
Un faisceau de conjectures implique que ce groupe  
est fini.  Pour certaines classes de solides, on a $H^3_\nr(X,\Q/\Z(2))=0$. 
  Savoir si c'est le cas pour tout solide (vari\'et\'e de dimension 3) est un probl\`eme ouvert.
 Lorsqu'un solide $X$  est fibr\'e au-dessus d'une courbe $C$, de fibre g\'en\'erique une surface projective et lisse $V$
sur le corps global $\F(C)$, la combinaison de $H^3_\nr(X,\Q/\Z(2))=0$ 
et de la conjecture de Tate pour les diviseurs sur $X$ a pour cons\'equence  un principe local-global 
de type Brauer--Manin pour le groupe de Chow des z\'ero-cycles de la fibre g\'en\'erique $V$. 
 Ceci \'eclaire d'un jour nouveau 
des   investigations commenc\'ees
il y a  trente ans.

\end{abstract}

\begin{altabstract}
Let $X$ be a smooth projective variety over a finite field $\F$. We discuss the
unramified cohomology group $H^3_\nr(X,\Q/\Z(2))$. Several conjectures 
put together imply
that this group is finite. For certain classes of threefolds, $H^3_\nr(X,\Q/\Z(2))$  actually vanishes. 
It is an open question whether this holds   for arbitrary threefolds. 
For a threefold $X$ equipped with a fibration onto a curve $C$, 
the generic fibre of which is a smooth projective surface $V$ over the global
field $\F(C)$, the vanishing of  $H^3_\nr(X,\Q/\Z(2))$  together with the Tate conjecture
for divisors on $X$ implies a local-global principle of Brauer--Manin type
for the Chow group of zero-cycles on $V$. 
This sheds a new light on 
work started thirty years ago.
\end{altabstract}
 
  \tableofcontents
 
 \section*{Introduction}

\`A une  vari\'et\'e $X$ projective, lisse, et g\'eom\'etriquement connexe sur un corps $k$, on associe (\cite{ct-o}, \cite{ctbarbara}) des
 groupes de cohomologie non ramifi\'ee $$H^{i}_{\nr}(X,\Q/\Z(i-1)), \hskip2mm i \geq 1,$$ qui sont  
des invariants birationnels. 
 Pour $i=1$ ce groupe   classifie les rev\^etements ab\'eliens \'etales de $X$.
Pour $i=2$, on obtient le groupe de Brauer $\Br(X)$. 
 Pour tout $i$, ils satisfont une propri\'et\'e de rigidit\'e
\cite[Thm. 4.4.1]{ctbarbara}.

On s'int\'eresse ici au groupe $H^{3}_{\nr}(X,\Q/\Z(2))$ et \`a ses liens  avec le groupe
de Chow $CH^2(X)$ des cycles de codimension deux modulo l'\'equiva\-lence rationnelle.
Ceci a d\'ej\`a fait  l'objet de deux articles r\'ecents, l'un de Claire Voisin et du premier
auteur \cite{ctv}, l'autre du deuxi\`eme auteur \cite{kahncycle}. 
 Dans ces articles,
il est \'etabli que :

\begin{itemize}

\item  Pour $k$ un corps fini ou alg\'ebriquement clos,  
pour tout nombre premier $l$  diff\'erent de la caract\'eristique, le grou\-pe
$H^{3}_{\nr}(X,\Q_{l}/\Z_{l}(2))$ est extension d'un groupe fini $C_{l}$ par 
un groupe divisible. 
 Le  groupe  $C_{l}$ est le sous-groupe de torsion du conoyau de l'application cycle  
 $ CH^{2}(X) \otimes \Z_{l} \to H^4_{\et}(X,\Z_{l}(2)).$

\item Si $k=\F$ est un corps fini,   $H^{3}_{\nr}(X,\Q/\Z(2))$ est fini si $X$ appartient \`a
une certaine classe $B_\Tate(\F)$  qui contient conjecturalement toutes les vari\'et\'es
projectives et lisses.

\end{itemize}

\medskip

Il se trouve, et c'est le point de d\'epart de cet article, que le groupe $H^{3}_{\nr}(X,\Q/\Z(2))$
 intervient dans l'\'etude des z\'ero-cycles sur les vari\'et\'es d\'efinies sur un corps  global. 
Dans ce domaine, on a la conjecture suivante  \cite{cts81,  katosaito, saito, ctseattle}:

\medskip

\noindent {\bf \textit{Conjecture \ref{congenzerocycle}.}}
 {\it
Soit $K$ un corps global. Soit $l$ un nombre premier diff\'erent de la caract\'eristique de $K$. Soit $V$ une vari\'et\'e projective et lisse,
g\'eom\'etriquement connexe sur $K$. S'il existe une famille $\{z_{v}\}$ de z\'ero-cycles locaux
 de
degr\'es premiers \`a $l$  telle que, pour tout \'el\'ement  
$A \in \Br(V)\{l\}$, on ait $$\sum_{v} \inv_{v}(A(z_{v}))=0,$$ alors il existe un z\'ero-cycle de
degr\'e premier \`a $l$  sur $V$.}

Cette conjecture a fait l'objet d'un certain nombre de travaux, l'un des plus r\'ecents \'etant l'article    \cite{wittenberg}  de O.~Wittenberg.

\medskip

Un lien entre la conjecture \ref{congenzerocycle} et la cohomologie non ramifi\'ee de degr\'e $3$ est donn\'e par l'\'enonc\'e suivant.

\medskip

\noindent {\bf \textit{Th\'eor\`eme}} (\emph{cf}. Th\'eor\`eme  \ref{corollairebrauermaninF(C)}).  
   {\it Soit $X$ une vari\'et\'e
projective lisse de dimension $3$ fibr\'ee au-dessus d'une courbe $C$ sur un corps fini $\F$,
la fibre g\'en\'erique \'etant une surface lisse g\'eom\'etriquement int\`egre
 $V$ sur $K=\F(C)$.  Soit $l\neq \car(\F)$ un nombre premier. On suppose que:
 \begin{thlist}
\item 
 L'application cycle
$$ CH^1(X) \otimes \Q_{l} \to H^2_{\et}(X,\Q_{l}(1))$$
est surjective (conjecture de Tate en codimension $1$).
 \item  On a $H^3_\nr(X,\Q_{l}/\Z_{l}(2))=0$. 
 \end{thlist}
 Alors la conjecture \ref{congenzerocycle} vaut pour pour la $K$-vari\'et\'e $V$.
}

\medskip

En combinant ce th\'eor\`eme et 
un r\'esultat r\'ecent de Parimala et Suresh (\cite{parimalasuresh}, Th\'eor\`eme  \ref{H3parimalasuresh} ci-dessous),
 on \'etablit l'\'enonc\'e   suivant (cas particulier de notre
th\'eor\`eme \ref{parasugeneralise}) :

\medskip

\noindent {\bf \textit{Th\'eor\`eme}}.
{\it 
Soient $\F$ un corps fini de caract\'eristique $p$ dif\-f\'e\-rente de $2$,  
et $C,S,X$ des vari\'et\'es projectives, lisses, g\'eom\'etriquement connexes
sur $\F$, de dimensions respectives $1,2,3$, \'equip\'ees de morphismes dominants
$X \to S \to C$, la fibre g\'en\'erique  de $X\to S$ \'etant une conique lisse, 
la fibre  g\'en\'erique  de $S \to C$ \'etant une courbe lisse g\'eom\'etriquement int\`egre.
Supposons satisfaite la conjecture de Tate pour les diviseurs sur la surface $S$.
Soit $V/\F(C)$  la fibre g\'en\'erique
de l'application compos\'ee $X \to C$.

Si la $\F(C)$-surface
$V$ porte une famille de z\'ero-cycles locaux de   degr\'es 1  orthogonale au
groupe de Brauer de $V$,  alors il existe un z\'ero-cycle de degr\'e une puissance de $p$
sur $V$.
}

 \medskip
 
 Parimala et Suresh  \'etablissent en effet que l'on a $H^3_\nr(X,\Q_{l}/\Z_{l}(2))=0$
pour les vari\'et\'es ci-dessus et $l \neq p$. Dans leur article, ils \'etablissent le th\'eor\`eme 
ci-dessus
pour  $S=\P^1_{C}$ : dans ce cas la conjecture de Tate est bien connue.

\medskip

D\'etaillons maintenant le contenu de l'article.

\medskip

Au \S \ref{outils}, on d\'ecrit les outils de cohomologie motivique utilis\'es dans l'article.
 La suite exacte \eqref{eqLK} est particuli\`erement importante. 

\medskip

Au \S \ref{structureH3}, on   donne une nouvelle preuve  
 d'une partie  du th\'eor\`eme  1.1  
 de \cite{kahncycle} sur la structure
du groupe $H^3_\nr(X,\Q/\Z(2))$,  lorsque le corps de base est un corps fini $\F$
(et plus g\'en\'eralement un corps \`a cohomologie galoisienne finie en $l$).

\medskip

Aux \S\S \ref{tailledivisibleH3},    \ref{taillefiniH3} et   \ref{exemplesquestions}, on se penche sur la taille du groupe $H^3_\nr(X,\Q/\Z(2))$ pour
$X$ projective et lisse sur un corps fini.  

\medskip

Au  \S \ref{tailledivisibleH3}, on
 explique, avec un peu plus de d\'etails que dans \cite{kahncycle}, comment  certaines conjectures  
  impliquent sa finitude.
On exhibe
des classes importantes de vari\'et\'es qui v\'erifient cette conclusion:   citons ici la proposition  \ref{surfacedomine} et le th\'eor\`eme  \ref{copiekahn}.

\medskip

Pour $X$ de dimension au plus 2, le groupe $H^3_\nr(X,\Q/\Z(2))$ est nul
(voir  la proposition
\ref{ctssgros}).  On donne au \S  \ref{taillefiniH3} deux cas de
divisibilit\'e
de ce groupe pour $X$ de dimension $3$.

\medskip

\noindent {\textit {\textbf {Question \ref{questiondim3fini}}}}.
Pour   $X/\F$  une vari\'et\'e projective et lisse de dimension~$3$,
a-t-on $H^3_\nr(X,\Q/\Z(2))=0$?

\medskip

  L'analogue de cette   question pour les vari\'et\'es complexes a une r\'eponse n\'egative,
comme l'ont montr\'e de nombreux exemples (cf. \cite[\S 5]{ctv}).

\medskip

 On a    $H^3_\nr(X,\Q/\Z(2))=0$  pour  plusieurs classes de  vari\'et\'es $X$ de dimension 3  g\'eom\'etriquement unir\'egl\'ees
 (\cite{parimalasuresh}, \cite{ctsurvoisin}).
 Par ailleurs,  pour une vari\'et\'e complexe $X$ de dimension 3   unir\'egl\'ee,  on peut gr\^ace \`a
un  th\'eor\`eme de C.~Voisin montrer  $H^3_\nr(X,\Q/\Z(2))=0$ (\cite[Thm. 1.2]{ctv}).
Ceci motive :

\medskip
  
  \noindent{\bf \textit{Conjecture \ref{conjunireglee3finie}}}.
{\it  Pour   $X/\F$  une vari\'et\'e projective et lisse de dimension~$3$ g\'eom\'etriquement unir\'egl\'ee,
 on a $H^3_\nr(X,\Q/\Z(2))=0$.} \\

\medskip

Au \S  \ref{desgal}, 
on fait de la descente galoisienne sur les groupes
 $H^3_\nr(X,\Q/\Z(2))$ et  $CH^2(X)$ 
pour $X$ une vari\'et\'e   projective, lisse, g\'eom\'etriquement connexe sur un corps $F$ \emph{a priori} quelconque.
 On \'etudie en particulier le groupe
 $$\Ker\left(H^{3}_\nr(X,\Q/\Z(2)) \to H^{3}_\nr(\ovX,\Q/\Z(2))\right),$$
o\`u  $\ovX=X\times_{F}F_{s}$ pour une cl\^oture s\'eparable $F_{s}$ de $F$; 
on relie ce noyau \`a la $K$-th\'eorie
 du corps des fonctions de $\ovX$. Lorsque   $F$ est de dimension cohomologique $\leq 1$,
 ceci donne un premier contr\^ole sur
 les noyau et
conoyau de la restriction
$$CH^2(X)\to CH^2(\ovX)^G,$$
o\`u $G=\Gal(F_{s}/F)$ (th\'eor\`eme \ref{cdFleq1}).

 \medskip

Au \S \ref{varfinies}, on applique les r\'esultats   du \S  \ref{desgal}
au cas des vari\'et\'es sur les corps finis.  Des r\'esultats connus 
sur la $K$-cohomologie de telles vari\'et\'es \cite{ctraskind,grossuwa},
et qui reposent sur les conjectures de Weil  d\'emontr\'ees par Deligne,  
nous permettent alors d'\'etablir   le   :
 
\medskip

\noindent{\bf \textit {Th\'eor\`eme \ref{hauptsatz}}}. {\it 
Soit $\F$ un corps fini.
Soit $\ovF$ une cl\^{o}ture alg\'{e}brique de $\F$, et soit $G$ le groupe de Galois de $\ovF$
sur
$\F$. Soit $V$ une $\F$-vari\'{e}t\'{e} projective, lisse, g\'{e}om\'{e}triquement int\`{e}gre.
Soit $M$ le module galoisien fini
$\bigoplus_{l} H^3(\ovV,\Z_{l}(2))\{l\}$.
On a alors une suite exacte 
\begin{multline*}
0 \to \Ker \left(CH^2(V) \to CH^2(\ovV)\right)  \to  H^1(G,M) \\
\to  \Ker\left(H^3_\nr(V,\Q/\Z(2)) \to H^3_\nr(\ovV,\Q/\Z(2))\right)\\
\to   \Coker\left(CH^2(V) \to CH^2(\ovV)^G\right) \to 0.
\end{multline*}

}

 Au \S \ref{globalpositif}, on rappelle les conjectures locales-globales g\'en\'erales de
\cite{ctseattle} sur
 les z\'ero-cycles dans le cas des vari\'et\'es  sur un corps global $K=\F(C)$ de caract\'eristique positive,
 et on montre comment le th\'eor\`eme de Parimala et Suresh \cite{parimalasuresh} permet d'\'etablir
 ces conjectures pour des surfaces  fibr\'ees en coniques au-dessus d'une courbe
 de genre quelconque, lorsque l'on suppose la conjecture de Tate pour
les surfaces sur $\F$
  (Th\'eor\`eme \ref{parasugeneralise}).
 On \'etablit en particulier les deux th\'eor\`emes cit\'es  au d\'ebut de l'introduction.
 
\medskip

 Au \S \ref{global}, nous consid\'erons les  vari\'et\'es  
 projectives et lisses  
sur un corps global quelconque.
Pour le groupe $H^3_{\nr}(V,\Q/\Z(2))$ d'une telle vari\'et\'e $V$, 
nous formulons    
  des  questions et conjectures
   \'etroitement li\'ees d'une part 
  aux conjectures pour les vari\'et\'es sur un corps fini discut\'ees au \S 
\ref{exemplesquestions}, d'autre part
 \`a une conjecture
faite en 1981 par  Sansuc et  le premier auteur  pour  $V$  surface rationnelle sur un corps de nombres  \cite{cts81}. 
  On voit  comment le
th\'eor\`eme  \ref{parasugeneralise}  est parall\`ele aux r\'esultats d\'ej\`a obtenus pour les surfaces fibr\'ees en coniques
au-dessus d'une courbe sur un corps de nombres, sous l'hypoth\`ese que le groupe de Tate-Shafarevich
de la jacobienne de la courbe est fini (cf. \cite{wittenberg}).

\subsection*{Notations}
  \'Etant donn\'e un groupe ab\'elien $A$, un entier $n>0$ et un nombre premier $l$,
 on note $A[n]$ le sous-groupe de $A$ form\'e des \'el\'ements annul\'es par $n$,
 et on note $A\{l\}$ le sous-groupe de torsion $l$-primaire de $A$. On note $A_\div$ son
sous-groupe divisible maximal.

 \section{Les outils}\label{outils}
 
\subsection{Cohomologie non ramifi\'ee} Soient $k$ un corps et $X$ une $k$-vari\'et\'e. Pour
tout nombre premier
$l$ diff\'erent de la caract\'eristique $p$
 de $k$,   tout entier $n>0$, tout entier $j\in \Z$ et tout ouvert $U \subset X$, on dispose
du groupe de cohomologie \'etale
 $H^{i}_{\et}(U,\mu_{l^n}^{\otimes j}).$ 
 En faisceautisant ces groupes
 pour la topologie de Zariski sur $X$, on obtient des faisceaux ${\mathcal H}^{i}_{X}(\mu_{l^n}^{\otimes j}).$
 On peut faire la m\^eme construction en rempla\c cant $\mu_{l^n}^{\otimes j}$ par
$\Q_{l}/\Z_{l}(j)=\colim_{n}\mu_{l^n}^{\otimes j}$.
 
On d\'efinit les groupes de cohomologie non ramifi\'ee $H^{i}_\nr(X,\mu_{l^n}^{\otimes j})$ et
\break
$H^{i}_\nr(X,\Q_{l}/\Z_{l}(j))$ par les formules
$$H^{i}_\nr(X,\mu_{l^n}^{\otimes j}) = H^0(X,{\mathcal H}^{i}_{X}(\mu_{l^n}^{\otimes j}))$$
et
 $$H^{i}_\nr(X,\Q_{l}/\Z_{l}(j)) = H^0(X,{\mathcal H}^{i}_{X}(\Q_{l}/\Z_{l}(j)))).$$
 
 Comme il est expliqu\'e par exemple dans \cite{ctbarbara}, la conjecture de Gersten pour la cohomologie \'etale, \'etablie par
 Bloch et Ogus \cite{bo}, permet pour $X/k$ {\it lisse} et int\`egre, de corps des fonctions $k(X)$,
d'identifier ces groupes \`a
 $$\Ker \big(H^{i}(k(X),\mu_{n}^{\otimes j}) \to \bigoplus_{x \in X^{(1)}} H^{i-1}(k(x),
\mu_{n}^{\otimes j-1})\big)$$
 et
  $$\Ker \big(H^{i}(k(X),\Q_{l}/\Z_{l}(j)) \to \bigoplus_{x \in X^{(1)}} H^{i-1}(k(x),
\Q_{l}/\Z_{l}(j-1))\big),$$
  o\`u $x$ parcourt l'ensemble des points de codimension 1 de $X$, le corps $k(x)$ est
  le corps r\'esiduel en $x$  et les fl\`eches sont des
  applications r\'esidus en cohomologie galoisienne associ\'ees aux anneaux de valuation discr\`ete ${\mathcal O}_{X,x}$.
Ceci permet de montrer  que les groupes  $H^{i}_\nr(X,\mu_{l^n}^{\otimes j})$ et
$H^{i}_\nr(X,\Q_{l}/\Z_{l}(j))$ sont des invariants  $k$-birationnels des $k$-vari\'et\'es int\`egres projectives et lisses.
Pour de telles vari\'et\'es, ceci
permet de les identifier aux groupes de cohomologie non ramifi\'ee introduits par Ojanguren et le premier auteur dans \cite{ct-o}.

Une cons\'equence de la conjecture de Bloch--Kato 
(maintenant un th\'eor\`eme, gr\^ace \`a plusieurs
auteurs, parmi lesquels nous citerons par ordre alphab\'etique  Rost, Suslin, Voevodsky,
Weibel)
est que pour $i \geq 1$  les groupes 
$$H^{i}_\nr(X,\Q_{l}/\Z_{l}(i-1))$$ sont la r\'eunion, et non seulement la limite inductive, des groupes
$H^{i}_\nr(X,\mu_{l^n}^{\otimes i-1}) $.

En utilisant les complexes de de Rham-Witt (Bloch,   Illusie, Milne), pour $X$ r\'egulier de type
fini sur un corps $k$ de caract\'eristique $p>0$ et $i>0$, on d\'efinit des groupes
\begin{align}
H^i(X,\Z/p^n(j))&:= H^{i-j}_\et(X,\nu_n(j))\label{eqHWl}\\ 
H^i(X,\Q_p/\Z_p(j)) &:= \colim\nolimits_n H^i(X,\Z/p^n(j))\notag
\end{align}
o\`u $\nu_n(j)$ est le faisceau de Hodge-Witt logarithmique de poids $j$ et de niveau $n$, d'o\`u des groupes 
$H^{i}_\nr(X,\Q_{p}/\Z_{p}(i-1))$
\cite[d\'ef. 2.7, App. A]{kahncycle}.

Pour $i\geq 1$, et $X$ lisse sur un corps $k$, on d\'efinit :
$$H^{i}_\nr(X,\Q/\Z(i-1)) =  \bigoplus_{l} H^{i}_\nr(X,\Q_{l}/\Z_{l}(i-1)).$$

\subsection{Complexes motiviques} Le pr\'esent article, tout comme l'article \cite{kahncycle},
utilise de fa\c con cruciale les complexes  $\Z(2)$,   version moderne des complexes $\Gamma(2) $ de Lichtenbaum, et leurs propri\'et\'es.
On notera que celles-ci ne font intervenir que le th\'eor\`eme de Merkurjev--Suslin;
la conjecture g\'en\'erale de Bloch--Kato n'est pas utilis\'ee dans  \cite{kahncycle} ni dans le pr\'esent article  --- alors qu'elle
l'est,  en degr\'e 3, dans l'article \cite{ctv}.

Les complexes  $\Gamma(2)$ et $\Z(2)$ sont des objets de la cat\'egorie d\'eriv\'ee des
faisceaux sur le petit site Zariski d'un sch\'ema $X$; on note leurs \'etalifi\'es avec un indice $\et$.
 \'Etant donn\'e une vari\'et\'e $X$ lisse (voire un sch\'ema r\'egulier de type fini) sur un
corps
$k$ d'exposant caract\'eristique
$p$ et un entier $n$ inversible sur
$X$, on a  
des triangles exacts (``suites de Kummer et d'Artin-Schreier'') dans cette cat\'egorie
d\'eriv\'ee :
\begin{equation}\label{LK}
 \Gamma(2)   
\by{\times n} \Gamma(2)  
 \to \mu_{n}^{\otimes 2}\by{+1}, \quad  \Gamma(2)   
\by{\times p^r} \Gamma(2)  
 \to \nu_{r}(2)[-2]\by{+1} 
 \end{equation}
 (Lichtenbaum \cite{lichtenbaum1, lichtenbaum2},  Kahn \cite{kahn}),
et leur variante ``moderne''
\begin{equation} \label{LSVGL2}
\Z_\et(2)  \by{\times n}   \Z_\et(2) \to
\mu_{n}^{\otimes 2} \by{+1} , \; \Z_\et(2)   
\by{\times p^r} \Z_\et(2)  
 \to \nu_{r}(2)[-2]\by{+1} 
  \end{equation}
 Ces derniers sont des  cas particuliers de triangles exacts
\begin{equation} \label{LSVGLq}
   \Z_\et(i) \by{\times n}  \Z_\et(i) \to \mu_{n}^{\otimes i} \by{+1} , \quad  \Z_\et(i)   
\by{\times p^r} \Z_\et(i)  
 \to \nu_{r}(i)[-i]\by{+1}
     \end{equation}
o\`u $\nu_r(i)$ est le faisceau de Hodge-Witt logarithmique de poids $i$ et de niveau $r$ si
$p>1$, et est $0$ par d\'efinition si $p=1$.  On renvoie
\`a \cite[th. 2.6]{kahncycle} pour les d\'etails et les r\'ef\'erences.

\subsection{Cohomologie motivique et $\K$-cohomologie} Certains des groupes d'hypercohomologie
de complexes motiviques utilis\'es dans le pr\'esent article ont \'et\'e \'etudi\'es dans le
pass\'e, en particulier dans l'article \cite{ctraskind},
dans le contexte de la $K$-cohomologie. Ils apparaissent aussi
comme groupes de Chow sup\'e\-rieurs (Bloch).

Rappelons ici que 
l'on note $\K_{i,X}$ 
les faisceaux Zariski sur 
un sch\'ema $X$ associ\'es au pr\'efaisceau qui \`a un ouvert
$U$ associe le groupe $K_{i}(H^0(U,O_{X}))$,
o\`u $K_{i}(A)$ est le $i$-i\`eme groupe de $K$-th\'eorie d'un anneau commutatif $A$, 
comme d\'efini par Quillen.

Pour la commodit\'e du lecteur, nous r\'ep\'etons ici les  identifications
(voir \cite[Thm. 1.1 et 
Thm. 1.6]{kahn} et
\cite[\S 2]{kahncycle}) entre les divers groupes qui nous int\'eressent ici.

\begin{prop}\label{isosbasdegre}
Pour  $X$ une vari\'et\'e lisse sur un corps, on a des isomorphismes canoniques
\begin{align*} CH^2(X,2) \simeq H^0(X,\K_{2})  &\simeq \bH^2_\Zar(X,\Gamma(2))  \simeq
\bH^2_{\et}(X,\Gamma(2))\\  &\simeq  \bH^2_\Zar(X,\Z(2)) \simeq   \bH^2_\et(X,\Z(2));\\
CH^2(X,1) \simeq H^1(X,\K_{2})  &\simeq \bH^3_\Zar(X,\Gamma(2))  \simeq  
\bH^3_{\et}(X,\Gamma(2))\\  &\simeq \bH^3_\Zar(X,\Z(2)) \simeq  \bH^3_\et(X,\Z(2));\\
CH^2(X) = CH^2(X,0) \simeq H^2(X,\K_{2}) &\simeq   \bH^4_\Zar(X,\Gamma(2))\\
&\simeq   \bH^4_\Zar(X,\Z(2)).
\end{align*}
\end{prop}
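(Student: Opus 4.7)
The identifications in Proposition \ref{isosbasdegre} form a bookkeeping compilation of four standard comparison theorems, each available in the weight-$2$ range without invoking the full Bloch--Kato conjecture. My plan is to obtain every isomorphism by applying one such theorem, and to exhibit the particular shape of the proposition (three full rows, with only four terms in the last one) as a direct reflection of where each comparison is known to hold.

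First, Bloch's theorem on higher Chow groups gives, for $X$ smooth over a field, $CH^j(X,i)\simeq \bH^{2j-i}_\Zar(X,\Z(j))$. Specializing to $j=2$ and $i=0,1,2$ supplies the isomorphisms with $\bH^{4-i}_\Zar(X,\Z(2))$ in all three rows. The identifications with $H^{2-i}(X,\K_2)$ then follow from Quillen's Gersten resolution for $\K_{2,X}$ on a smooth variety: in the relevant degrees the Gersten complex coincides with a shift of Bloch's cycle complex, yielding the Bloch--Quillen formula and its higher analogues as worked out in \cite[Thm. 1.1 and 1.6]{kahn} and \cite[\S 2]{kahncycle}.

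Second, for the middle columns I would use the natural comparison map $\Gamma(2)\to \Z(2)$ in the derived category of Zariski sheaves. The Merkurjev--Suslin theorem on $K_2$ modulo $\ell$ implies that this map is a quasi-isomorphism through the relevant degrees, hence induces isomorphisms on Zariski hypercohomology in degrees $\leq 3$; the same argument handles the étalified versions. Third, the étale columns in the first two rows come from the change-of-topology map $\Z(2)_\Zar \to R\pi_*\Z(2)_\et$ (and its analogue for $\Gamma(2)$), which induces isomorphisms on $\bH^i$ in degrees $\leq 3$ on smooth varieties over a field. This is the weight-$2$ case of the Beilinson--Lichtenbaum statement and again reduces to Merkurjev--Suslin. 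The absence of étale entries in the third row reflects the genuine failure of this comparison in degree $4$: Zariski and étale hypercohomology of $\Z(2)$ typically differ by torsion detected by unramified cohomology, precisely the group $H^3_\nr(X,\Q/\Z(2))$ at the heart of the paper.

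No single step is difficult in isolation; the content is a layered assembly of four comparison maps. The principal obstacle is organizational rather than substantive: one must keep track of which isomorphisms rest on the Merkurjev--Suslin theorem and which are formal, so as to avoid a circular reference to the higher-weight Bloch--Kato theorem that the authors are explicitly trying to circumvent, and one must be careful that the comparison $\Gamma(2)\to \Z(2)$ and the Zariski-to-étale comparison for $\Z(2)$ are independent inputs rather than a single statement applied twice.
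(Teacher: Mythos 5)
Your proposal is correct and is essentially the paper's own route: the paper gives no independent argument for this proposition but simply cites \cite[Thm. 1.1 et Thm. 1.6]{kahn} and \cite[\S 2]{kahncycle}, whose proofs are exactly the layered assembly you describe (higher Chow groups as Zariski hypercohomology of $\Z(2)$, Gersten resolutions in weight two, the comparison $\Gamma(2)\simeq\Z(2)$, and the change-of-topology isomorphism in degrees $\le 3$, with the degree-$4$ failure measured by $H^3_\nr$). One attribution to adjust: the quasi-isomorphism $\Gamma(2)\to\Z(2)$ rests on identifying the degree-one cohomology sheaves via Merkurjev--Suslin's computation of indecomposable $K_3$ (that is, $CH^2(F,3)\simeq K_3^{\mathrm{ind}}(F)$, \cite{msK3}), not on Hilbert 90 for $K_2$, which is the input only for the Zariski-to-\'etale comparison in degrees $\le 3$.
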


\subsection{$CH^2$ et $H^3_\nr$} Notons que dans la proposition ci-dessus, les isomorphismes
entre groupes de cohomologie Zariski et \'etale s'arr\^etent en degr\'e $3$. Outre le triangle
exact
\eqref{LSVGL2},  
 la suite exacte 
\begin{equation}\label{eqLK}
0 \to CH^2(X) \to \bH^4_{\et}(X,\Z(2)) \to H^3_\nr(X,\Q/\Z(2))  \to 0
\end{equation}
joue un r\^ole central dans le pr\'esent article. Avec le complexe $\Gamma(2)$ de Lichtenbaum \cite{lichtenbaum1}
\`a la place de  de $\Z(2)$,
cette suite   appara\^{\i}t pour
la premi\`ere fois (mais de fa\c con pas compl\`etement  explicite)  
 dans   un article de Lichtenbaum (\cite{lichtenbaum2}, Thm. 2.13, sa d\'e\-mons\-tra\-tion, et
remarque  2.14),
 o\`u elle est \'etablie \`a la $2$-torsion pr\`es.
Toujours avec le complexe $\Gamma(2)$,
elle   est  \'etablie en toute g\'en\'eralit\'e dans
\cite[Thm. 1.1]{kahn}.
  Sous la forme moderne ci-dessus, la suite est \'etablie dans \cite[Prop. 2.9]{kahncycle}.

\subsection{Descente} Enfin on fait de la descente galoisienne sur l'hypercohomologie des
complexes 
$\Z(2)$.
Dans \cite{ctv}, on fait cela sur les complexes de Gersten en $K$-th\'eorie, comme cela
avait \'et\'e fait par Raskind et le premier auteur dans \cite{ctraskind}, \`a la suite
de Spencer Bloch.
Dans divers articles, et en particulier \cite{kahn2, kahn}, 
le second auteur a montr\'e l'utilit\'e de la descente galoisienne
sur l'hypercohomologie \'etale des complexes $\Gamma(2)$ ou $\Z(2)$.

\section{Cycles de codimension $2$ et   $H^3$ non ramifi\'e}
 \label{structureH3}

Soit $l$ un nombre premier. On dit qu'un corps $k$ est \emph{\`a cohomologie galoisienne finie en $l$} 
  si  $l\neq \car(k)$ et si, pour
tout module galoisien fini 
$M$  d'ordre une puissance de $l$
et tout  entier $i\geq 0$, les groupes de cohomologie
$H^{i}(k,M)$ sont finis. Ceci implique que pour toute $k$-vari\'et\'e
$X$ les groupes de cohomologie \'etale $H^{i}_{\et}(X,M)$ sont finis.
Exemples de tels corps : les corps alg\'ebri\-que\-ment clos, les corps
finis, le corps des r\'eels, les corps $p$-adiques, les corps locaux sup\'erieurs
dont la derni\`ere caract\'eristique r\'esiduelle est distincte de $l$.

La consid\'eration de tels corps simplifie l'exposition de l'application cycle, et la d\'emonstration du th\'eor\`eme principal de cette section.

Le lemme suivant est bien connu et tr\`es utile:

\begin{lem}\label{l2.1}
 Soient $A$ un groupe ab\'elien et $l$ un nombre premier.
On a une application naturelle
\begin{equation}\label{eq2.0}
A \otimes \Z_{l} \to   \lim\nolimits_{n} A/l^n.
\end{equation}
\begin{thlist}
\item Pour tout $n>0$, cette application induit modulo $l^n$
un isomorphisme
$$(A \otimes \Z_{l} )/l^n \iso A/l^n.$$
\item Le $\Z_{l}$-module $ \lim\nolimits_{n} A/l^n$  est de type fini si et seulement si $A/l$ est fini.
Si c'est le cas, 
 la fl\`eche \eqref{eq2.0}
est surjective.
\item Soit $T$ un groupe ab\'elien de torsion.
Le groupe $T \otimes \Z_{l}  $ s'identifie naturellement
au sous-groupe de torsion $l$-primaire de $T$. 
\end{thlist}
\end{lem}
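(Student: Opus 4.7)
The map \eqref{eq2.0} sends $a\otimes z$ to the compatible system $(z_n\cdot a)_n\in \lim_n A/l^n$, where $z_n$ denotes the image of $z$ in $\Z/l^n=\Z_l/l^n\Z_l$; this is well defined since $l^n$ acts as zero on $A/l^n$. For (i), I tensor the short exact sequence $0\to A\by{l^n} A\to A/l^n\to 0$ with the flat $\Z$-module $\Z_l$, obtaining $(A\otimes\Z_l)/l^n\simeq A\otimes(\Z_l/l^n)=A/l^n$. Unpacking the construction, this is exactly the map induced by \eqref{eq2.0} modulo $l^n$, and (i) follows.

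Set $M:=\lim_n A/l^n$. One implication of (ii) is immediate: if $M$ is a finitely generated $\Z_l$-module then $M/lM$ is finite, and $A/l$ is a quotient of $M/lM$ hence finite. For the converse, assume $A/l$ finite; an induction using $0\to l^nA/l^{n+1}A\to A/l^{n+1}\to A/l^n\to 0$ shows that each $A/l^n$ is finite, so $M$ is a profinite $l$-group and in particular an $l$-adically complete $\Z_l$-module. Lifting generators of $A/l$ to elements $a_1,\dots,a_r\in A$ produces a $\Z_l$-linear map $\Z_l^r\to A\otimes\Z_l\to M$ that is surjective modulo $l$ by construction. Topological Nakayama, applied to the complete local ring $\Z_l$ and the complete module $M$, then forces $\Z_l^r\to M$ itself to be surjective, yielding simultaneously the finite generation of $M$ and the surjectivity of \eqref{eq2.0}.

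For (iii), decompose $T=T\{l\}\oplus T'$, where $T'$ is the sum of the $q$-primary components for primes $q\neq l$. Every $t\in T'$ is annihilated by an integer coprime to $l$, which is a unit of $\Z_l$, so $T'\otimes\Z_l=0$. The evaluation map $T\{l\}\otimes_\Z\Z_l\to T\{l\}$ sending $t\otimes z$ to $z\cdot t$ is surjective via $t\mapsto t\otimes 1$; it is injective because $l^a t=0$ implies $t\otimes z=t\otimes(z\bmod l^a)$, so every element of the source reduces to a single pure tensor $s\otimes 1$ with $s\in T\{l\}$. Combining these, $T\otimes\Z_l\simeq T\{l\}$.

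The only genuinely nonformal step is the appeal to topological Nakayama in (ii); everything else reduces to flatness of $\Z_l$ over $\Z$ and a bit of bookkeeping. Verifying the completeness hypothesis on $M$ is precisely where the assumption that $A/l$ be finite enters, since it ensures each level $A/l^n$ is finite.
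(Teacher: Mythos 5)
Your proofs of (i) and (iii) are correct; the paper treats these as standard and only writes out the substantive direction of (ii). There your route genuinely differs from the paper's. The paper picks a finitely generated subgroup $B\subset A$ generating $A$ modulo $l$, shows by induction on $n$ that $B/l^n\to A/l^n$ is surjective, and then passes to the limit — surjectivity of the induced maps on the (finite) kernels makes $\lim$ exact — to obtain
$$B\otimes\Z_l \Surj \lim\nolimits_n B/l^n \Surj \lim\nolimits_n A/l^n,$$
which yields finite generation of the limit and the surjectivity of \eqref{eq2.0} in one stroke. You instead work directly on $M=\lim_n A/l^n$, using its compactness/$l$-adic completeness and topological Nakayama. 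Both mechanisms are standard; the paper's has the merit of making the factorization through a finitely generated group explicit, while yours isolates the role of compactness.

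One step of yours is glossed, and as literally written it does not follow: you assert that $\Z_l^r\to M$ is ``surjective modulo $l$ by construction.'' What is true by construction is that the composite $\Z_l^r\to M\by{\pi_1} A/l$ is surjective; since $lM\subseteq\Ker\pi_1$, this only gives $\Z_l^r$ surjective onto $M/\Ker\pi_1$, and you need $\Ker\pi_1=lM$ (equivalently $M/lM\iso A/l$) before Nakayama applies. That identity is correct under your finiteness hypothesis but requires an argument — e.g.\ a compactness argument with the nonempty finite sets of $l$-divisions at each level, or, more simply, the observation that the same induction proving finiteness of the $A/l^n$ also shows your lifts $a_1,\dots,a_r$ generate $A/l^n$ for \emph{every} $n$; then the image of $\Z_l^r$ in $M$ is dense, and being compact it is closed, hence all of $M$. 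Note that this repair is in substance the paper's argument, so the Nakayama framing does not let you bypass the level-by-level induction; with it supplied, your proof is complete.
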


\begin{proof}
D\'emontrons simplement (ii). Soit $B \subset A$ un sous-groupe de type fini engendrant $A$ modulo $l$.
On v\'erifie par r\'ecurrence sur $n$ que les applications de groupes finis $B/l^n \by{f_{n}} A/l^n$ sont surjectives. D'o\`u la surjectivit\'e des $\Ker f_{n+1}\to \Ker f_{n}$, qui entra\^\i ne la deuxi\`eme des surjections
$$B\otimes \Z_{l} \Surj   \lim\nolimits_{n} B/l^n \Surj   \lim\nolimits_{n} A/l^n.$$
 \end{proof}

\subsection{L'application cycle} Soit $X$ une $k$-vari\'et\'e lisse, et soit $l$ un nombre premier. 

Supposons d'abord $l$ distinct de la caract\'eristique de $k$. Puisque $k$ est \`a cohomologie galoisienne finie en $l$, le groupe
\begin{equation}\label{eql-adic}
H^j_\et(X,\Z_l(i))=\lim\nolimits_n H^j_\et(X,\mu_{l^n}^{\otimes i})
\end{equation}
est un $\Z_l$-module de type fini (lemme \ref{l2.1}).

Supposons maintenant $k$ \emph{fini} de caract\'eristique $p$ et $X$ \emph{projective}. Alors les groupes $H^j_\et(X,\Z/p^n(i))$ de \eqref{eqHWl} sont \emph{finis} \cite[cor. 1.12]{milneamer}, et on peut d\'efinir $H^j_\et(X,\Z_p(i))$ comme en \eqref{eql-adic}: c'est un $\Z_p$-module de type fini.

Dans les deux cas cit\'es, on dispose d'applications classe de cycle
\begin{equation}\label{eqtate}
CH^{i}(X) \otimes \Z_{l} \to H^{2i}_{\et}(X,\Z_{l}(i)) \quad (i\ge 0)
\end{equation}
d\'efinies comme limites projectives des applications classe de cycle usuelles:
\[CH^{i}(X) /l^n\to H^{2i}_{\et}(X,\Z/l^n(i)).\]

Pour $l\ne p$, celles-ci sont construites dans \cite{cycle}. Pour $l=p$, une construction est donn\'ee dans \cite{gros}, et une construction \'equivalente dans \cite{milneamer}. Une autre est donn\'ee dans \cite[\S 3.1]{kahncycle}; il est probable, mais non d\'emontr\'e, que les deux constructions co\"\i ncident.

 \subsection{\'Enonc\'e du th\'eor\`eme}
 
\begin{theo}\label{Hauptsatz} Soient $l$ un nombre premier, $k$ un corps  \`a cohomologie galoisienne finie en $l$, et soit $X$ une
$k$-vari\'et\'e lisse. Soit  $M$ le
$\Z_{l}$-module de type fini
$$M = \Coker \left(CH^2(X) \otimes \Z_{l} \to H^4_{\et}(X,\Z_{l}(2))\right).$$
Le quotient de
$H^3_\nr (X,\Q_{l}/\Z_{l}(2))$
 par son sous-groupe divisible maximal
 est le groupe fini $M_\tors$.\\
 Si $k$ est fini et $X$ est projective, cette assertion s'\'etend \`a $l=p$, caract\'eristique de $k$.
\end{theo}

Ce th\'eor\`eme est un cas particulier d'un th\'eor\`eme du second auteur \cite[th. 1.1]{kahncycle}
valable sur un corps quelconque, les classes de cycle \'etant \`a valeurs dans la cohomologie \'etale continue de Jannsen.  Le cas o\`u le corps de base est le corps des complexes
avait \'et\'e consid\'er\'e par C.~Voisin et le premier auteur dans \cite{ctv}.

La m\'ethode que nous utilisons ici, sp\'ecifique au cas des corps
\`a cohomologie galoisienne finie, repose sur un argument de comptage.
Elle   a \'et\'e reprise par Alena Pirutka
\cite{pirutka3}
pour \'etudier, pour tout $i$ entier,  le conoyau de l'application
$$\Coker \left(CH^{i}(X) \otimes \Z_{l} \to H^{2i}_{\et}(X,\Z_{l}(2))\right)$$
pour $X$ projective et lisse de dimension $d$ sur un corps $k$  \`a cohomologie galoisienne finie en $l$,
avec application particuli\`ere au cas $i=d-1$ lorsque $k$ est un corps fini.

 \subsection{D\'emonstration du th\'eor\`eme} Cette d\'emonstration fonctionne uniform\'ement pour $l\ne p$ ou pour $l=p$, en prenant la d\'efinition de la classe de cycle $p$-adique  de \cite{kahncycle}.

On consid\`ere la suite exacte fondamentale (\ref{eqLK})
$$0 \to CH^2(X) \to \bH^4_\et(X,\Z(2)) \to H^3_\nr (X,\Q/\Z(2)) \to 0.$$
On la tensorise avec $\Z_{l}$. Cela donne  
$$0 \to CH^2(X)\otimes \Z_{l } \to \bH^4_\et(X,\Z(2))\otimes \Z_{l } \to H^3_\nr (X,\Q_{l}/\Z_{l}(2)) \to 0.$$

Du triangle exact  (\ref{LSVGL2})
on d\'eduit  
 la suite exacte de groupes ab\'eliens
\begin{equation}\label{eq2.4}
0 \to \bH^4_\et(X,\Z(2))/l^n \to  H^4_{\et}(X,\mu_{l^{n}}^{\otimes 2}) \to
\bH^5_\et(X,\Z(2))[l^n] \to 0.
\end{equation}

Les fl\`eches
$$\bH^4_\et(X,\Z(2)) \to \bH^4_\et(X,\Z(2))/l^n \to H^4_{\et}(X,\mu_{l^{n}}^{\otimes 2}     )   $$
induisent des fl\`eches
$$\bH^4_\et(X,\Z(2)) \to  \lim_{n} \bH^4_\et(X,\Z(2))/l^n \to  \lim_{n} H^4_{\et}(X,\mu_{l^{n}}^{\otimes 2})=H^4_{\et}(X,\Z_{l}(2))$$
(o\`u la derni\`ere \'egalit\'e est ici une d\'efinition) et donc une fl\`eche
\begin{equation}\label{eq2.5}
 \theta_{1} : \bH^4_\et(X,\Z(2)) \otimes
\Z_{l} \to H^4_{\et}(X,\Z_{l}(2)).
\end{equation}

De \eqref{eq2.4} et de la finitude de $H^4_{\et}(X,\mu_{l^{n}}^{\otimes 2})$, on d\'eduit la 
suite  exacte de $\Z_{l}$-modules de type fini
$$0 \to  \lim_{n} \bH^4_\et(X,\Z(2))/l^n \to   H^4_{\et}(X,\Z_{l}(2))  \to   \lim_{n}  
\bH^5_\et(X,\Z(2))[l^n] \to 0.$$

Le terme de droite est sans torsion, car c'est un module de Tate.
 Comme   $ \lim_{n} \bH^4_\et(X,\Z(2))/l^n  $ est un $\Z_{l}$-module de type
 fini, 
 il r\'esulte  alors du lemme \ref{l2.1} (ii)
 que l'application $\theta_{1} $ de (\ref{eq2.5}) 
 a son conoyau sans torsion.

Si $l\ne p$, l'application compos\'ee 
$$CH^2(X)\otimes \Z_{l }  \to \bH^4_\et(X,\Z(2))\otimes
\Z_{l } \to H^4_\et(X,\Z_{l}(2))$$ 
est   l'application cycle classique \cite[Rem.
3.1]{kahncycle}. Si $l=p$, il en est de m\^eme avec la d\'efinition de \cite{kahncycle} (ibid.).

Consid\'erons alors le diagramme commutatif de suites exactes

\begin{equation}\label{eq2.6}
\begin{CD}
 0  & \to & CH^2(X)\otimes \Z_{l } & \to & \bH^4_\et(X,\Z(2))\otimes \Z_{l }  & \to & H^3_\nr (X,\Q_{l}/\Z_{l}(2)) & \to & 0 \\
&&   ||&& 
@VV{\theta_1}V @VV{\theta_{0}}V\\
  &   & CH^2(X)\otimes \Z_{l}  & \to & H^4_\et(X,\Z_{l}(2)) & \to & M & \to  & 0.
\end{CD}
\end{equation} 
o\`u la fl\`eche  $\theta_{0}$ 
est induite par le reste du diagramme. L'image de cette fl\`eche est \'evidemment de torsion. Notons
$$\theta : H^3_\nr (X,\Q_{l}/\Z_{l}(2))  \to M_\tors$$
l'application induite.
On a vu que la fl\`eche verticale m\'ediane a son conoyau sans torsion, et le diagramme montre que ce conoyau s'identifie \`a celui de 
$\theta_{0}$.
 Ainsi $\theta$ est surjective.

Par d\'efinition de la fl\`eche $\bH^4_\et(X,\Z(2))\otimes \Z_{l } \to H^4_\et(X,\Z_{l}(2)) $,
pour tout $n>0$, 
le diagramme suivant commute
\[\begin{CD}
(\bH^4_\et(X,\Z(2))\otimes \Z_{l })/l^n  @>{a}>> \bH^4_\et(X,\Z(2))/l^n \\
 @V{c}VV @V{d}VV\\
 H^4_\et(X,\Z_{l}(2))/l^n @>{b}>>  H^4_{\et}(X,\mu_{l^{n}}^{\otimes 2}) 
 \end{CD}\] 

La fl\`eche $a$
est un isomorphisme (lemme \ref{l2.1} (i)).
La fl\`eche $d$
est une injection, d'apr\`es (\ref{LSVGL2}).
On conclut que $c$
est injectif.

Notons $K=H^3_\nr (X,\Q_{l}/\Z_{l}(2))$. Passant au quotient modulo $l^n$ dans le diagramme
\eqref{eq2.6}, on voit que $c$
induit une injection
$$ \theta_{0} \hskip2mm {\rm mod} \hskip1mm  l^n \hskip1mm : K/l^n \hookrightarrow M/l^n.$$
On a vu que $\theta_{0}$ induit une surjection $
\theta : K \to M_\tors$.
On conclut que $\theta$ induit un isomorphisme 
$$ \theta \hskip2mm {\rm mod} \hskip1mm  l^n \hskip1mm :  K/l^n \to M_\tors/l^n.$$

Le groupe $ M_\tors$ est fini, en particulier d'exposant $l^e$ fini. Pour $n \geq e$,
la projection naturelle $M_\tors/l^{n+1} \to M_\tors/l^n$ est donc un isomorphisme.
Il en r\'esulte que pour $n \geq e$, les fl\`eches 
$$K/l^{n+1} \to K/l^{n} \to M_\tors/l^n$$
et $M_\tors \to M_\tors/l^n$ 
sont des isomorphismes. Ainsi pout tout tel $n$, $l^nK=l^{n+1}K$.
Le sous-groupe $l^eK \subset K$ est donc un groupe divisible,
  c'est le sous-groupe divisible maximal $K_\div$ de $K$, et le quotient de
  $K$ par $K_\div$ s'identifie au groupe fini $M_\tors$.

On a donc montr\'e que le quotient
$K=H^3_\nr(X,\Q_{l}/\Z_{l}(2))$ par son sous-groupe divisible maximal 
s'identifie  au sous-groupe de torsion du conoyau de l'application cycle
$$CH^2(X)\otimes \Z_{l}   \to  H^4_\et(X,\Z_{l}(2)),$$
sous-groupe de torsion  qui est fini.

\subsection{Remarques}

  \begin{rema} On a vu dans la d\'emonstration que la fl\`eche $\theta_1$ de \eqref{eq2.5} a un
conoyau sans torsion. On en d\'eduit l'\'enonc\'e suivant: \emph{toute classe
$\alpha\in H^4_{\et}(X,\Z_{l}(2) )_\tors$ est dans l'image de
$\bH^4_\et(X,\Z(2))\otimes \Z_l$}. Dans
\cite[cor. 3.5]{kahncycle}, il est d\'emontr\'e que le noyau de $\theta_1$ est divisible, 
ce qui implique que $\alpha$ provient m\^eme de 
$\bH^4_\et(X,\Z(2))_\tors\iso \left(\bH^4_\et(X,\Z(2))\otimes\Z_l\right)_\tors$ (voir lemme
\ref{l2.1} (iii)), donc en particulier de $\bH^4_\et(X,\Z(2))$.

Si $k=\C$, si $X$ est projective et si on remplace $\Z(2)$ par $\Gamma(2)$, ce dernier 
\'enonc\'e est d\^u \`a Lichtenbaum  \cite[th. 2.15]{lichtenbaum2}. L'argument
ci-dessus \'evite le th\'eor\`eme de Lieberman, utilis\'e dans \cite{lichtenbaum2}, selon lequel l'\'equivalence
homologique co\"\i ncide avec l'\'equivalence num\'erique sur les cycles de codimension $2$
(th\'eor\`eme reposant
\emph{in fine} sur le th\'eor\`eme (1,1) de Lefschetz).

 \begin{comment} Soit $k=\bf C$ le corps des complexes. Pour tout $l$ premier, 
on a le diagramme commutatif de suites exactes
   \[\begin{CD}
 0  & \to & CH^2(X)  & \to & \bH^4_\et(X,\Z(2))  & \to & H^3_\nr (X,\Q/\Z(2)) & \to & 0 \\
&&   @VVV @VVV @VV{\theta_{0}}V\\
  &   & CH^2(X)\otimes \Z_{l}  & \to & H^4_\et(X,\Z_{l}(2)) & \to & M & \to  & 0.
\end{CD}\] 
On a \'etabli la surjectivit\'e de la fl\`eche induite $$\theta : H^3_\nr (X,\Q/\Z(2)) \to M_\tors.$$
 
Ceci est \`a comparer 
avec le r\'esultat suivant. Pour une vari\'et\'e projective et lisse
sur les complexes, Lichtenbaum  (\cite{lichtenbaum2}, Th\'eor\`eme 2.15, dont la d\'emonstration 
repose sur
un r\'esultat de Lieberman) 
montre que toute classe
dans $H^4_{\et}(X,\Z_{l}(2) )_\tors$ est dans l'image de $\bH^4_\et(X,\Z(2))$.
\end{comment}

Si l'on lit cet \'enonc\'e \`a la lueur de la suite exacte
$$0 \to CH^2(X) \to \bH^4_\et(X,\Z(2)) \to H^3_\nr (X,\Q/\Z(2)) \to 0,$$
on voit que le corollaire 2.16 de \cite{lichtenbaum2}  implique  que les contre-exemples de Atiyah et Hirzebruch 
\`a la conjecture de Hodge enti\`ere pour les cycles de codimension 2
donnent des exemples sur $\C$ avec $H^3_\nr (X,\Q/\Z(2)) \neq 0$
(voir  \cite{ctv}).
\end{rema}

\begin{rema}
Soit $X$ une vari\'et\'e projective, lisse, g\'e\-o\-m\'e\-tri\-que\-ment
con\-nexe sur un corps fini $\F$. Supposons $H^3_\nr(X,\Q_{l}/\Z_{l}(2))$ fini
(comme on le verra au \S  \ref{tailledivisibleH3}, c'est 
peut-\^etre toujours le cas).
La proposition 4.3.5
de Saito-Sato  \cite{saitosato} \'etablit  
alors 
le r\'esultat suivant, cas particulier
du Th\'eor\`eme \ref{Hauptsatz} :
  {\it l'ordre} de $H^3_\nr(X,\Q_{l}/\Z_{l}(2))$ est \'egal \`a {\it l'ordre} du sous-groupe de torsion du groupe
$$\Coker \left(CH^2(X) \otimes \Z_{l} \to H^4_{\et}(X,\Z_{l}(2))\right).$$
\end{rema}

 \section{Finitude de
$H^3$ non ramifi\'e  : r\'esultats et conjectures}\label{tailledivisibleH3}

 Commen\c cons
  par un rappel.
   
 \begin{prop}\label{ctssgros} Soit $k$ un corps alg\'ebriquement clos.
 Pour toute $k$-vari\'et\'e lisse $X$
 de dimension $d \leq 2$ (resp. pour tout corps de fonctions $K/k$ en au plus $2$ variables), on
a $H^3_\nr(X,\Q/\Z(2))=0$ (resp. $H^3_\nr(K/k,\Q/\Z(2))=0$). La m\^eme conclusion est vraie si
$k$ est fini et $X$ \emph{projective}. 
 \end{prop}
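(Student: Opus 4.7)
Le plan est uniforme : on part de l'injection de Bloch-Ogus
\[
H^3_\nr(X, \Q/\Z(2)) \hookrightarrow H^3(k(X), \Q/\Z(2)),
\]
valable pour $X$ lisse int\`egre sur $k$ et rappel\'ee au \S\ref{outils}. Les deux premi\`eres assertions (sur $k$ alg\'ebriquement clos) se r\'eduisent donc au calcul de $H^3(K, \Q/\Z(2))$ pour $K$ un corps de fonctions de degr\'e de transcendance au plus $2$ sur $k$. Pour tout $l \neq \car(k)$, le th\'eor\`eme de Tsen-Lang donne $\cd_l(K) \leq 2$ (voir Serre, \emph{Cohomologie galoisienne}, II.4), donc $H^3(K, \mu_{l^n}^{\otimes 2}) = 0$ pour tout $n$ ; par le th\'eor\`eme de Bloch-Kato rappel\'e au \S\ref{outils}, $H^3(K, \Q_l/\Z_l(2))$ est la r\'eunion des $H^3(K, \mu_{l^n}^{\otimes 2})$, donc nul. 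En caract\'eristique $p > 0$, la partie $p$-primaire se traite par un argument analogue pour les faisceaux de Hodge-Witt logarithmique $\nu_n(2)$, \`a partir de $\cd_p(K) \leq 2$ et des r\'esultats de Bloch-Gabber-Kato.

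Pour $k = \F$ fini et $X$ projective lisse, les cas $\dim X = 0, 1$ rel\`event du m\^eme argument : $\F(X)$ est soit fini, soit un corps global de caract\'eristique positive, donc de dimension cohomologique au plus $2$ en dehors de $p$, avec un analogue Hodge-Witt pour $l = p$. Le cas $\dim X = 2$ est essentiellement diff\'erent car $\cd \F(X) = 3$, si bien que $H^3(\F(X), \Q/\Z(2))$ est g\'en\'eriquement non nul : il faut exploiter la projectivit\'e de $X$. On invoque alors la suite exacte \eqref{eqLK} et le diagramme commutatif de la d\'emonstration du Th\'eor\`eme \ref{Hauptsatz} : si l'application cycle
\[
CH^2(X) \otimes \Z_l \to H^4_\et(X, \Z_l(2))
\]
est surjective, alors la fl\`eche $\bH^4_\et(X, \Z(2)) \otimes \Z_l \to H^4_\et(X, \Z_l(2))$, d\'ej\`a injective \`a conoyau sans torsion par l'argument du Th\'eor\`eme \ref{Hauptsatz}, devient un isomorphisme ; la suite \eqref{eqLK} force alors $H^3_\nr(X, \Q_l/\Z_l(2)) = 0$. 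Pour une surface projective lisse sur $\F$, cette surjectivit\'e est classique : pour $l \neq p$ elle r\'esulte de la th\'eorie du corps de classes sup\'erieur de Kato-Saito, qui \'etablit en fait $CH_0(X)/l^n \iso H^4_\et(X, \mu_{l^n}^{\otimes 2})$ ; pour $l = p$, des travaux parall\`eles de Gros, Milne et Kato-Saito-Sato.

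Le principal obstacle est donc le cas $\dim X = 2$ sur un corps fini : contrairement au cas alg\'ebriquement clos, l'annulation demand\'ee n'est pas une cons\'equence formelle de la dimension cohomologique du corps des fonctions, mais un r\'esultat arithm\'etique de type corps de classes sup\'erieur --- le premier cas non trivial de la conjecture de Kato, d\'emontr\'e par Kato lui-m\^eme en dimension $2$.
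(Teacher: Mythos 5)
Votre r\'eduction du cas alg\'ebriquement clos au calcul de $H^3(K,\Q/\Z(2))$ via Tsen--Lang et Bloch--Kato est correcte et co\"{\i}ncide avec l'argument du texte pour la torsion premi\`ere \`a $p$. En revanche, le cas crucial que vous isolez vous-m\^eme --- $X$ surface projective et lisse sur $\F$ --- comporte une lacune r\'eelle. De la surjectivit\'e de l'application cycle $CH^2(X)\otimes\Z_l\to H^4_\et(X,\Z_l(2))$ (qui d\'ecoule bien de la th\'eorie du corps de classes non ramifi\'ee de Kato--Saito), le th\'eor\`eme \ref{Hauptsatz} ne donne que la \emph{divisibilit\'e} de $H^3_\nr(X,\Q_l/\Z_l(2))$ : il identifie $M_\tors$ au quotient de ce groupe par son sous-groupe divisible maximal et ne dit rien du sous-groupe divisible lui-m\^eme. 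Votre pas suppl\'ementaire repose sur l'affirmation que $\theta_1:\bH^4_\et(X,\Z(2))\otimes\Z_l\to H^4_\et(X,\Z_l(2))$ serait injective ``par l'argument du th\'eor\`eme \ref{Hauptsatz}'' ; or cet argument n'\'etablit que l'injectivit\'e de $\theta_1$ \emph{modulo} $l^n$ (la fl\`eche $c$) et le fait que son conoyau est sans torsion. Le noyau de $\theta_1$ n'est connu qu'\^etre divisible (\cite[cor. 3.5]{kahncycle}, cf. la premi\`ere remarque suivant la d\'emonstration du th\'eor\`eme \ref{Hauptsatz}), ce qui ne l'emp\^eche pas a priori d'\^etre non nul. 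C'est pr\'ecis\'ement pourquoi le texte, au th\'eor\`eme \ref{corollairebrauermaninF(C)} et au corollaire \ref{c3.22}, ne tire de la surjectivit\'e (ou de la finitude du conoyau) de l'application cycle que la divisibilit\'e de $H^3_\nr$, jamais sa nullit\'e. Telle quelle, votre d\'emonstration s'arr\^ete donc \`a ``$H^3_\nr(X,\Q_l/\Z_l(2))$ divisible''.

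L'ingr\'edient manquant est une \emph{finitude}, apr\`es quoi fini et divisible entra\^{\i}ne nul. Pour une surface, la suite exacte de Bloch--Ogus rappel\'ee au d\'ebut du \S~\ref{conjbass} suffit : sur un corps fini, $H^3_\et(X,\Q_l/\Z_l(2))$ est fini (Deligne et Hochschild--Serre, \cite{ctss}), et $CH^2(X)\otimes\Q_l/\Z_l\simeq\Q_l/\Z_l$ (finitude de $A_0(X)$, Kato--Saito) s'injecte dans $H^4_\et(X,\Q_l/\Z_l(2))$ via la classe d'un z\'ero-cycle de degr\'e premier \`a $l$ et la trace g\'eom\'etrique ; d'o\`u la finitude de $H^3_\nr(X,\Q_l/\Z_l(2))$, donc sa nullit\'e. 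C'est en substance l'argument original de \cite[Rem. 2 p.~790]{ctss}, que le texte se contente de citer. Deux impr\'ecisions secondaires : pour $l=p$ sur un corps alg\'ebriquement clos, la nullit\'e de $H^3(K,\Q_p/\Z_p(2))=H^1(K,\nu_{\infty}(2))$ n'est pas une question de dimension cohomologique --- on a toujours $\cd_p(K)\le 1$ en caract\'eristique $p$, et le groupe \`a annuler est un $H^1$ --- mais le contenu du lemme de Suwa \cite[lemma 2.1]{suwa}, le cas d'un corps fini \'etant le th\'eor\`eme de Kato \cite{katocrelle} s'appuyant sur Gros \cite{gros0} ; enfin, pour $l\neq p$, le cas des surfaces sur un corps fini est d\^u \`a Colliot-Th\'el\`ene, Sansuc et Soul\'e plut\^ot qu'\`a Kato seul.
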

 
 Pour
 la torsion premi\`ere \`a $p={\rm car}(k)$,  le r\'esultat est
clair sur un corps s\'eparablement clos (la dimension cohomologique
 du corps de fonctions $k(X)$ est au plus $2$);  sur un corps fini, il est 
 d\^u \`a Sansuc, Soul\'e et au premier auteur \cite[Rem. 2 p.~790]{ctss}.
 Pour la $p$-torsion,  le r\'esultat sur un corps alg\'ebriquement clos est d\^u \`a N. Suwa	
\cite[lemma 2.1]{suwa} et sur un corps fini \`a K. Kato \cite[cor. p. 145]{katocrelle}, qui se
repose sur des r\'esultats de M. Gros 
\cite{gros0}.  Enfin, le passage des vari\'et\'es projectives lisses
aux corps de fonctions d\'ecoule de la r\'esolution des singularit\'es en dimension $\le 2$  (Abhyankar)
gr\^ace \`a 
\cite[prop. 2.1.8]{ctbarbara}.

 \subsection{Correspondances et motifs birationnels}\label{cormotbir}

 \begin{prop} \label{surfacedomine}
Soit  $k$ un corps fini ou un corps s\'eparablement clos. 
Soit $X$ une $k$-vari\'et\'e projective, lisse, g\'e\-o\-m\'e\-tri\-que\-ment int\`egre. 
Supposons qu'il existe un domaine universel $\Omega$  contenant $k$, 
une surface $Y$ projective et lisse sur $k$ 
 et un $k$-morphisme $f:Y \to
X$
qui induit une surjection
\[f_*:CH_0(Y_\Omega)\otimes\Q \Surj CH_0(X_\Omega)\otimes\Q. \]
Alors pour tout $l$ premier distinct 
de la caract\'eristique de $k$, le groupe $$H^3_\nr(X,\Q_{l}/\Z_{l}(2))$$ est fini,
et il est nul pour presque tout premier $l$.
 \end{prop}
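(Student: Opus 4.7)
The plan is to deduce a decomposition of the diagonal à la Bloch--Srinivas from the hypothesis on zero-cycles, and then to combine it with Theorem \ref{Hauptsatz}.

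First, set $d = \dim X$. The hypothesis, together with a standard filtered-colimit argument reducing $CH_0$ over the universal domain $\Omega$ to $CH_0$ over finitely generated extensions, forces the class of the generic point of $X_{k(X)}$ in $CH_0(X_{k(X)}) \otimes \Q$ to lie in the image of $f_{*,k(X)}$. Spreading out from $\Spec k(X)$ to a dense open $U \subset X$ and clearing denominators yields an integer $N > 0$, a cycle $W$ on $Y \times X$, and the identity
\[ N \Delta_X = (f \times \operatorname{id})_*(W) + Z_1 \quad \text{in } CH^d(X \times X), \]
where $Z_1$ is supported on $X \times D'$ for some proper closed $D' = X \setminus U \subsetneq X$. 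This is the familiar Bloch--Srinivas spread-out argument.

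Second, fix a prime $l \neq \car(k)$ and $\alpha \in H^3_\nr(X, \Q_l/\Z_l(2))$. With the correspondence action defined by $Z^*(\beta) := (\operatorname{pr}_2)_*(\cl(Z) \cup \operatorname{pr}_1^*(\beta))$ (so that $\Delta_X^* = \operatorname{id}$), the decomposition yields
\[ N \alpha = Z_1^*(\alpha) + \bigl((f \times \operatorname{id})_*(W)\bigr)^*(\alpha). \]
For the first summand, $\cl(Z_1) \cup \operatorname{pr}_1^*(\alpha)$ is supported on $X \times D'$, so its image under $(\operatorname{pr}_2)_*$ is a class in $H^3(X, \Q_l/\Z_l(2))$ supported on $D'$; its restriction to $\Spec k(X)$ vanishes, and via the Bloch--Ogus Gersten resolution $H^3_\nr(X, \Q_l/\Z_l(2))$ injects into $H^3(k(X), \Q_l/\Z_l(2))$, so the summand is zero in $H^3_\nr$. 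For the second summand, the projection formula rewrites it as $W^*(f^*(\alpha))$, with $W$ viewed as a correspondence from $Y$ to $X$; but $f^*(\alpha) \in H^3_\nr(Y, \Q_l/\Z_l(2))$, which vanishes by Proposition \ref{ctssgros} since $Y$ is a smooth projective surface over the finite or separably closed field $k$ (in the separably closed case, base change to $\overline k$ does not affect $l$-adic étale cohomology for $l \neq \car(k)$). Hence $N \alpha = 0$, so $H^3_\nr(X, \Q_l/\Z_l(2))$ is annihilated by $N$ for every prime $l \neq \car(k)$.

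Finally, apply Theorem \ref{Hauptsatz}: the quotient of $H^3_\nr(X, \Q_l/\Z_l(2))$ by its maximal divisible subgroup is finite, and a divisible abelian group killed by $N$ is trivial, so the divisible subgroup is zero and $H^3_\nr(X, \Q_l/\Z_l(2))$ itself is finite. Moreover, for any prime $l$ not dividing $N$ (and $l \neq \car(k)$), multiplication by $N$ is a bijection on the $l$-primary group $H^3_\nr(X, \Q_l/\Z_l(2))$, hence it is zero; this accounts for all but finitely many primes $l$. The main technical obstacle lies in justifying the correspondence action on unramified cohomology, and in particular the vanishing of $Z_1^*$ on $H^3_\nr$: one must assemble the proper pushforward in étale cohomology, its compatibility with support conditions, and the Bloch--Ogus injection into the cohomology of the function field. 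Once this is carefully laid out, the Bloch--Srinivas mechanism proceeds formally.
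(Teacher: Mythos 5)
Your overall strategy is exactly that of the paper's first proof of Proposition \ref{surfacedomine}: a Bloch--Srinivas decomposition $N\Delta_X = (f\times 1)_*(W) + Z_2$ with $Z_2$ dying on $X\times U$ (obtained, as in the paper, by a transfer argument over $k(X)$ followed by spreading out, after \cite[Lecture 1, Appendix]{blochbook}), the action of correspondences on unramified cohomology, the vanishing $H^3_\nr(Y,\Q_l/\Z_l(2))=0$ for the surface $Y$ (Proposition \ref{ctssgros}), and finally Theorem \ref{Hauptsatz} to convert ``annihilated by $N$'' into finiteness, with vanishing for all $l\nmid N$. Your endgame, including the almost-all-$l$ statement, matches the paper's.

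There is, however, one genuine gap, and it is precisely the point you defer: your definition $Z^*(\beta) := (\mathrm{pr}_2)_*(\cl(Z)\cup \mathrm{pr}_1^*(\beta))$ does not type-check for $\beta\in H^3_\nr(X,\Q_l/\Z_l(2))$. An unramified class is an element of $H^0(X,{\mathcal H}^3(\Q_l/\Z_l(2)))$, not of $H^3_\et(X,\Q_l/\Z_l(2))$, and over a finite field the edge map $H^3_\et(X,\Q_l/\Z_l(2))\to H^3_\nr(X,\Q_l/\Z_l(2))$ is in general not surjective: by the Bloch--Ogus sequence recalled at the beginning of \S\ref{conjbass}, its cokernel is the kernel of $CH^2(X)\otimes\Q_l/\Z_l\to H^4_\et(X,\Q_l/\Z_l(2))$. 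So $\alpha$ need not lift to an \'etale class, no assembly of proper pushforwards and support conditions repairs the formula, and even an action on $H^3_\et(X)$ alone would not let you conclude for $H^3_\nr(X)$. The missing device is structural, not routine: the paper observes that $H^i_\et(\bullet,\Q_l/\Z_l(j))$ is a cycle module in the sense of Rost \cite{rost}, that $H^3_\nr(X,\Q_l/\Z_l(2)) = A^0(X,M)$, and that Chow correspondences act on $A^0(X,M)$ by the theory of \cite{merkurjev} and \cite{deglise}; the vanishing of the action of a correspondence whose restriction to $CH_d(X\times_k U)$ is zero is \cite[Prop. 3.1]{merkurjev}, and the projection formula you invoke to rewrite $((f\times 1)_*W)^*(\alpha)$ as $W^*(f^*\alpha)$ is \cite[Prop. 5.9 (3)]{deglise}. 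With these substituted for your \'etale cup-product formulas, your argument becomes the paper's proof; note the paper also gives a second, categorical proof via birational Chow motives (lemmas \ref{l4.2} and \ref{l4.1}, proposition \ref{p4.1}), which sidesteps the explicit decomposition altogether.
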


\begin{proof} Soit $d$ la dimension  de $X$. Soit $\Delta_{X} \subset X \times_{k}X$ la diagonale.
Par un argument utilis\'e par S. Bloch \cite[Lecture 1, Appendix]{blochbook}, l'hypoth\`ese assure l'existence
d'un entier $N>0$ tel que la classe de  $N.\Delta_{X} \in CH_{d}(X \times_{k}X)$
soit la somme  de la classe d'un cycle  $Z_{1}   \in g_{*}(CH_{d}(Y\times_{k}X))$,
 et d'un cycle $Z_{2} \in  CH_{d}(X \times_{k}X)$
de restriction nulle \`a $CH_{d}(X \times_{k}U)$ pour $U$ un ouvert de Zariski non vide de $X$.

 (D\'etaillons l\'eg\`erement cet argument. Soit $K$ le corps des fonctions de $X$, qu'on suppose plong\'e dans $\Omega$. Un argument de transfert montre facilement que l'hypoth\`ese implique la surjectivit\'e de l'homomorphisme
\[CH_0(Y_K)\otimes \Q\by{f_*} CH_0(X_K)\otimes \Q.\]

Si $z$ est un $0$-cycle de $Y_K$ \`a coefficients rationnels qui s'envoie sur la classe du point g\'en\'erique de $X$, un rel\`evement de $z$ \`a $Y\times X$ fournit le cycle $Z_1$ cherch\'e.)
  
Les groupes de cohomologie galoisienne $H^{i}_{\et}(\bullet,\Q_{l}/\Z_{l}(j))$ 
associ\'es aux corps contenant $k$
  satisfont les axiomes des modules de cycles de Rost \cite[(1.11)]{rost}.
 Avec les notations de \cite{rost} et \cite{merkurjev}, le groupe $A^0(X,M)$
 associ\'e \`a $M=H^{i}_{\et}(\bullet,\Q_{l}/\Z_{l}(j))$   sur une $k$-vari\'et\'e
 projective et lisse $X$ est le groupe $H^{i}_\nr(X,\Q_{l}/\Z_{l}(j))$.
 
 On dispose donc d'un diagramme commutatif 
\begin{comment}
 \[\begin{CD}
 CH_{d}(Y \times_{k}X)  &  \times  & H^3_\nr(Y,\Q_{l}/\Z_{l}(2)) @>>>
H^3_\nr(X,\Q_{l}/\Z_{l}(2)) \\
   @AA{=}A @AAA @AA{=}A  \\
CH_{d}(Y \times_{k}X)  &  \times  & H^3_\nr(X,\Q_{l}/\Z_{l}(2) )  &  &    H^3_\nr(X,\Q_{l}/\Z_{l}(2))   \\
   @VVV @VV{=}V @VV{=}V  \\
   CH_{d}(X \times_{k}X)  &  \times  & H^3_\nr(X,\Q_{l}/\Z_{l}(2)) @>>>
H^3_\nr(X,\Q_{l}/\Z_{l}(2)) 
 \end{CD}\] 
\end{comment}
\[\xymatrix{
H^3_\nr(Y,\Q_{l}/\Z_{l}(2)) \times  CH_{d}(Y \times_{k}X)     \ar[dr]
\\
 H^3_\nr(X,\Q_{l}/\Z_{l}(2) ) \times CH_{d}(Y \times_{k}X)     \ar[u]^{f^* \times
1}\ar[d]_{1 \times g_{*}}    &    H^3_\nr(X,\Q_{l}/\Z_{l}(2)).   \\
 H^3_\nr(X,\Q_{l}/\Z_{l}(2)) \times  CH_{d}(X \times_{k}X)     \ar[ur]
}\]

 Dans ce diagramme,  les fl\`eches diagonales sont des accouplements de correspondances  
 fournis par la  th\'eorie de cycles de Rost \cite{merkurjev, deglise}. 
 La commutativit\'e du diagramme r\'esulte d'un cas particulier d'une formule de projection
mentionn\'ee dans
  \cite[\S 1]{merkurjev} et \'etablie par F. D\'eglise  \cite[Prop. 5.9 (3)]{deglise}.

L'\'egalit\'e  $$N.\Delta_{X} =Z_{1} + Z_{2} \in CH_{d}(X \times_{k}X) $$
permet de d\'ecomposer la multiplication par $N$ sur   $H^3_\nr(X,\Q_{l}/\Z_{l}(2))$
en la somme de $Z_{1, *}$ et $Z_{2,*}$. D'apr\`es \cite[Prop. 3.1]{merkurjev}, 
$Z_{2,*}=0$.

 La nullit\'e de $H^3_\nr(Y,\Q_{l}/\Z_{l}(2))$  (Proposition \ref{ctssgros})
et le diagramme ci-dessus donnent  $Z_{1,*}=0$.
 
 On voit donc que l'entier $N$ annule $H^3_\nr(X,\Q_{l}/\Z_{l}(2))$. D'apr\`es le
th\'eo\-r\`eme \ref{Hauptsatz}, le groupe   $H^3_\nr(X,\Q_{l}/\Z_{l}(2))$ est extension d'un
groupe fini par un groupe $l$-divisible. Ceci ach\`eve la d\'emonstration.
\end{proof}

 \begin{rema}\label{remsurfdomine2} 
L'hypoth\`ese de la proposition   \ref{surfacedomine} est satisfaite si  
la vari\'et\'e est rationnellement domin\'ee par le produit d'une surface et d'un espace projectif. 
Un autre exemple non trivial est celui de Bloch--Srinivas,  \cite[ex. 1  p. 1242]{bloch-sri}. Si $k$ est de caract\'eristique z\'ero, l'hypoth\`ese est conjecturalement tr\`es proche de la nullit\'e des groupes de cohomologie
coh\'erente $H^{i}(X,{\mathcal O}_{X})$ pour $i \geq 3$.
 \end{rema}

{\bf D\'emonstration alternative de la proposition \ref{surfacedomine}.}

\begin{lem}\label{l4.2} Soit  $\sA$ le quotient de la cat\'egorie des
groupes ab\'eliens par la classe de Serre des groupes ab\'eliens d'exposant fini.
Pour $l\ne \car k$, consid\'erons
$H^3_\nr(-,\Q_l/\Z_l(2))$ comme un foncteur de la cat\'egorie des vari\'et\'es projectives
lisses vers $\sA$. Alors:
\begin{itemize}
\item[(a)] Ce foncteur s'\'etend en un foncteur sur la cat\'egorie $\sM^\eff(k,\Q)$ des
motifs de Chow effectifs \`a coefficients rationnels.
\item[(b)]Ce nouveau foncteur passe au quotient \`a travers la cat\'egorie $\sM^\o(k,\Q)$ des
motifs de Chow birationnels.
\end{itemize}
\end{lem}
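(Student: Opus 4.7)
Le plan est d'exploiter le formalisme des modules de cycles de Rost, d\'ej\`a invoqu\'e dans la d\'emonstration de la proposition \ref{surfacedomine}.

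Pour (a), on commencerait par observer que $H^3_\nr(X,\Q_l/\Z_l(2))$ s'identifie \`a $A^0(X,M)$ pour le module de cycles $M=H^\bullet_\et(-,\Q_l/\Z_l(\bullet))$. Le formalisme de Rost \cite{rost,merkurjev} et la formule de projection de D\'eglise \cite[Prop. 5.9]{deglise} fournissent alors une action des correspondances enti\`eres $\alpha\in CH^{\dim X}(X\times Y)$ compatible aux compositions, d'o\`u un foncteur sur $\sM^\eff(k,\Z)$. Pour passer aux coefficients rationnels, on exploiterait l'observation clef suivante : la cat\'egorie $\sA$ est automatiquement $\Q$-lin\'eaire, puisque pour tout groupe ab\'elien $A$ et tout entier $n>0$, les groupes $A[n]$ et $A/nA$ sont d'exposant $n$, donc nuls dans $\sA$. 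La multiplication par $n$ y est donc inversible, et l'action des correspondances enti\`eres se prolonge canoniquement en une action de $CH^{\dim X}(X\times Y)\otimes\Q=\Hom_{\sM^\eff(k,\Q)}(h(X),h(Y))$.

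Pour (b), on s'appuierait sur la description par Kahn et Sujatha de la cat\'egorie $\sM^\o(k,\Q)$ comme quotient de $\sM^\eff(k,\Q)$ par l'id\'eal des correspondances dont la restriction \`a la fibre g\'en\'erique sur $X$ est nulle dans $CH_0(Y_{k(X)})\otimes\Q$. Il suffirait alors d'\'etablir que ces correspondances agissent par z\'ero sur $A^0(X,M)$ ; c'est pr\'ecis\'ement le contenu de \cite[Prop. 3.1]{merkurjev}, ph\'enom\`ene d\'ej\`a utilis\'e dans la preuve de la proposition \ref{surfacedomine} sous la forme $Z_{2,*}=0$. Une voie alternative consisterait \`a invoquer directement l'invariance birationnelle de la cohomologie non ramifi\'ee sur les vari\'et\'es projectives et lisses (\S\ref{outils}).

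La seule subtilit\'e r\'eside dans l'\'etape d'extension aux coefficients rationnels : plut\^ot qu'\'equiper $H^3_\nr$ lui-m\^eme d'une structure de $\Q$-espace vectoriel (ce qui serait impossible, s'agissant d'un groupe de torsion), on exploite la $\Q$-lin\'earit\'e \emph{automatique} de la cat\'egorie cible $\sA$. Une fois ce point acquis, tous les outils n\'ecessaires sont d\'ej\`a en place dans la litt\'erature et dans les sections pr\'ec\'edentes.
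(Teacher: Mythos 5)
Your proposal is correct, and for part (a) it is in substance the paper's own argument: the action of integral Chow correspondences on $H^3_\nr(-,\Q_l/\Z_l(2))=A^0(-,M)$ via Rost's cycle modules (exactly as set up in the proof of the proposition \ref{surfacedomine}, through \cite{rost}, \cite{merkurjev} and \cite[Prop. 5.9]{deglise}), combined with the $\Q$-lin\'earit\'e of $\sA$; your explicit verification that multiplication by $n$ becomes invertible in $\sA$ (its kernel and cokernel being of exponent $n$) is precisely the fact the paper asserts without proof. One small point you should record: to land on all of $\sM^\eff(k,\Q)$, which is a pseudo-abelian envelope, you also need idempotents to split in the target category, which is why the paper stresses that $\sA$ is \emph{ab\'elienne} and not merely $\Q$-lin\'eaire; since the quotient of an abelian category by a Serre class is abelian, this is immediate, but it is part of the argument. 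For part (b) you take a genuinely different, though equivalent, route: you use the Kahn--Sujatha presentation of $\sM^\o(k,\Q)$ as (the pseudo-abelian envelope of) the quotient of $\sM^\eff(k,\Q)$ by the ideal of correspondences vanishing on the generic fibre in $CH_0(Y_{k(X)})\otimes\Q$, and you kill this ideal on $A^0(X,M)$ via \cite[Prop. 3.1]{merkurjev} --- the same mechanism that gives $Z_{2,*}=0$ in the proof of the proposition \ref{surfacedomine}. The paper instead invokes the localization presentation \cite[cor. 2.4.2]{kahn-suj}, by which $\sM^\o(k,\Q)$ is obtained from $\sM^\eff(k,\Q)$ by inverting birational morphisms, so that it suffices to know that a birational morphism induces an isomorphism on unramified $H^3$, which is classical. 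Both presentations coincide by Kahn--Sujatha, so both arguments are valid: yours makes the ideal-theoretic vanishing mechanism explicit and reuses machinery already in place, while the paper's is more economical, needing only the well-known birational invariance; the ``voie alternative'' you sketch at the end is essentially the paper's proof.
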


\begin{proof} (a) La cat\'egorie $\sA$ est $\Q$-lin\'eaire et 
ab\'elienne: il suffit donc de montrer que les correspondances de Chow \`a coefficients
entiers op\`erent sur ce foncteur. Ce fait est justifi\'e ci-dessus.

(b) Une d\'efinition de $\sM^\o(k,\Q)$ est: l'enveloppe pseudo-ab\'elienne de la localisation
de $\sM^\eff(k,\Q)$ obtenue en inversant les morphismes birationnels \cite[cor.
2.4.2]{kahn-suj}. Il suffit donc de voir que tout morphisme birationnel induit un isomorphisme
sur $H^3$ non ramifi\'e, ce qui est bien connu.
\end{proof}

\begin{lem}\label{l4.1} Soient $X,Y$ deux vari\'et\'es projectives lisses sur un corps $k$ et
soit
$\gamma\in CH_{\dim Y}(Y\times X)\otimes\Q$ une correspondance de Chow. Supposons que
l'application induite
\[CH_0(Y_\Omega)\otimes \Q\to CH_0(X_\Omega)\otimes \Q\]
soit surjective.
 Alors le motif birationnel de $X$ est facteur direct de celui de $Y$.
\end{lem}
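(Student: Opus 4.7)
The plan is to produce, from the surjectivity hypothesis, a Chow correspondence $\delta \in CH_{\dim X}(X \times Y) \otimes \Q$ playing the role of a right inverse to $\gamma$ in the birational motive category: more precisely, such that $\gamma \circ \delta = \Delta_X + Z_2$ in $CH_{\dim X}(X \times X) \otimes \Q$, with $Z_2$ supported on $D \times X$ for some proper closed $D \subsetneq X$. The error term $Z_2$ will vanish in $\sM^\o(k,\Q)$, so $\gamma \circ \delta = \mathrm{id}_X$ there, and pseudo-abelianness will conclude.

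First I would run a Bloch--Srinivas style spreading argument. Fixing an embedding $\overline{k(X)} \hookrightarrow \Omega$, the generic point of $X$ defines a class $[\eta_X] \in CH_0(X_\Omega) \otimes \Q$. The hypothesis provides $z_\Omega \in CH_0(Y_\Omega) \otimes \Q$ with $\gamma_*(z_\Omega) = [\eta_X]$; a standard descent argument (easy with $\Q$-coefficients, via a finite extension and a trace) descends this to $z \in CH_0(Y_{k(X)}) \otimes \Q$ with $\gamma_{k(X),*}(z) = [\eta_X]$. Using the localization identification
$$CH_0(Y_{k(X)}) \otimes \Q \simeq \colim_{U} CH_{\dim X}(U \times Y) \otimes \Q,$$
where $U$ runs through the nonempty open subsets of $X$, I would lift $z$ to $\delta_U \in CH_{\dim X}(U \times Y) \otimes \Q$ for some such $U$, and define $\delta$ as its pushforward along the open immersion $U \times Y \hookrightarrow X \times Y$.

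By construction the restriction of $\gamma \circ \delta$ to the generic fiber over the first factor equals $\gamma_{k(X),*}(z) = [\eta_X]$, which is also the restriction of $\Delta_X$. Hence $\gamma \circ \delta - \Delta_X$ vanishes on $U \times X$, giving the desired decomposition with $Z_2$ supported on $D \times X$, $D = X \setminus U$. Now in $\sM^\o(k,\Q)$, constructed in \cite[cor. 2.4.2]{kahn-suj} as the pseudo-abelian envelope of the localization of $\sM^\eff(k,\Q)$ at birational morphisms, the group $\Hom_{\sM^\o}(X, X)$ can be identified with $CH_0(X_{k(X)}) \otimes \Q$, with the natural map from Chow correspondences given by restriction to the generic fiber over the first factor. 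This restriction annihilates $Z_2$, so $\gamma \circ \delta = \mathrm{id}_X$ in $\sM^\o(k,\Q)$, and pseudo-abelianness of this category turns the retraction $(\delta, \gamma)$ into a direct factor decomposition of the birational motive of $Y$ with $X$ as one summand.

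The main obstacle is not the Bloch--Srinivas step, which is routine, but confirming that correspondences supported on $D \times X$ with $D \subsetneq X$ die in $\sM^\o(k,\Q)$. This vanishing of non-dominant correspondences is the key categorical input; it is implicit in the description of birational motives recalled in the proof of Lemma \ref{l4.2}, and once it is granted the rest of the argument is formal.
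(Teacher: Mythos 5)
Your proof is correct and is essentially the paper's own argument: the paper disposes of this lemma by citing a variant of \cite[prop. 7.6]{qjpam}, whose proof is precisely your combination of the Bloch--Srinivas trace-and-spreading step with the passage to the birational category. The one input you flag as an obstacle --- that correspondences supported on $D\times X$, $D\subsetneq X$, die in $\sM^\o(k,\Q)$ --- is not merely implicit but is exactly the Kahn--Sujatha computation $\Hom_{\sM^\o}(X,Y)\simeq CH_0(Y_{k(X)})\otimes\Q$ in \cite{kahn-suj}, so it is granted by the same reference the paper invokes for the construction of $\sM^\o(k,\Q)$.
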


\begin{proof} C'est une variante de celle de \cite[prop. 7.6]{qjpam}.
\end{proof}

\begin{prop}\label{p4.1} Notons $(\Q/\Z)'(2)=\bigoplus_{l\ne \car k} \Q_l/\Z_l(2)$. Alors, sous
les hypoth\`eses du lemme
\ref{l4.1}:\\ 
a)  Dans la cat\'egorie $\sA$, le groupe $H^3_\nr(X,(\Q/\Z)'(2))$ est facteur direct du groupe
$H^3_\nr(Y,(\Q/\Z)'(2))$.\\  
b) Si $k$ est \`a cohomologie galoisienne finie en $l$ pour tout $l\ne \car k$, la finitude de $H^3_\nr(Y,(\Q/\Z)'(2))$
implique celle de $H^3_\nr(X,(\Q/\Z)'(2))$ (dans la cat\'egorie des groupes ab\'eliens).
\end{prop}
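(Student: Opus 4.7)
Pour (a), le plan est d'appliquer formellement les deux lemmes pr\'ec\'edents. Le lemme \ref{l4.1} fournit, sous l'hypoth\`ese de surjectivit\'e sur les $\Q$-espaces vectoriels de z\'ero-cycles, une d\'ecomposition dans la cat\'egorie $\sM^\o(k,\Q)$ faisant du motif birationnel de $X$ \`a coefficients rationnels un facteur direct du motif birationnel de $Y$. Je propose ensuite d'appliquer, pour chaque premier $l\neq \car k$, le foncteur $H^3_\nr(-,\Q_l/\Z_l(2))$ de $\sM^\o(k,\Q)$ vers $\sA$ fourni par le lemme \ref{l4.2}. La fonctorialit\'e transporte alors imm\'ediatement cette d\'ecomposition motivique en une d\'ecomposition dans $\sA$; on conclut en sommant sur les premiers $l \neq \car k$, ce qui donne l'\'enonc\'e pour le coefficient $(\Q/\Z)'(2)$.

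Pour (b), l'id\'ee est de transformer l'information cat\'egorique obtenue en (a) en finitude effective. La finitude suppos\'ee de $H^3_\nr(Y,(\Q/\Z)'(2))$ entra\^\i ne que ce groupe est d'exposant fini, donc nul dans $\sA$; par (a), il en va de m\^eme pour $H^3_\nr(X,(\Q/\Z)'(2))$. On dispose ainsi d'un entier $N>0$ qui annule ce dernier. C'est ici qu'intervient l'hypoth\`ese de cohomologie galoisienne finie faite sur $k$ : le th\'eor\`eme \ref{Hauptsatz} assure que, pour chaque $l\neq \car k$, le groupe $H^3_\nr(X,\Q_l/\Z_l(2))$ est extension d'un groupe fini par son sous-groupe divisible maximal. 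Tout groupe divisible d'exposant fini \'etant nul, ce sous-groupe s'annule et le groupe $l$-primaire $H^3_\nr(X,\Q_l/\Z_l(2))$ est fini. Puisque $N$ annule la somme directe sur les premiers $l$, seuls ceux qui divisent $N$ peuvent contribuer non trivialement, d'o\`u la finitude globale de $H^3_\nr(X,(\Q/\Z)'(2))$.

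L'\'etape substantielle est l'invocation du th\'eor\`eme \ref{Hauptsatz} dans (b) : sans l'hypoth\`ese de cohomologie galoisienne finie en chaque $l\neq \car k$, la $\sA$-nullit\'e (c'est-\`a-dire l'exposant fini) ne suffirait pas \`a garantir la finitude comme groupe ab\'elien, puisqu'une partie divisible non triviale pourrait subsister et \'echapper au contr\^ole de l'exposant. Le reste de l'argument est purement formel et ne fait qu'exploiter les propri\'et\'es fonctorielles d\'ej\`a encod\'ees dans les lemmes \ref{l4.2} et \ref{l4.1}.
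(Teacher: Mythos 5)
Votre preuve est correcte et suit essentiellement la m\^eme route que celle du texte : la partie (a) s'obtient en combinant les lemmes \ref{l4.1} et \ref{l4.2} (d\'ecomposition du motif birationnel, puis application du foncteur $H^3_\nr$ \`a valeurs dans $\sA$), et la partie (b) en d\'eduisant de (a) que le groupe est d'exposant fini, puis en invoquant le th\'eor\`eme \ref{Hauptsatz} (extension d'un groupe fini par un groupe divisible) pour chaque $l\neq\car k$. Votre pr\'ecision selon laquelle seuls les premiers divisant l'exposant annulateur $N$ contribuent explicite exactement l'\'etape implicite de la conclusion du texte.
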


\begin{proof} a) r\'esulte des lemmes 
\ref{l4.2} et \ref{l4.1}. b) D'apr\`es a), le groupe ab\'elien $H^3_\nr(X,(\Q/\Z)'(2))$ est
d'exposant fini. Mais d'apr\`es \cite[Thm. 1.1]{kahncycle} ou  encore d'apr\`es
le th\'eor\`eme  \ref{Hauptsatz},
pour tout $l$ premier,
 ses composantes
$l$-primaires sont extensions d'un groupe fini par un groupe divisible. Elles sont donc finies,
et au total $H^3_\nr(X,(\Q/\Z)'(2))$ est fini.
\end{proof}

\medskip

Les propositions    \ref{ctssgros} et \ref{p4.1}  donnent imm\'ediatement
la proposition  \ref{surfacedomine}.
\qed

\begin{rema} Si $l=\car k$, les arguments ci-dessus fonctionnent \`a condition de savoir que les
correspondances de Chow \`a coefficients entiers op\`erent sur $H^3_\nr(-,\Q_p/\Z_p(2))$. La
technique de d\'emonstration pr\'ec\'edente ne s'applique plus, car
$K\mapsto H^{*+1}_\et(K,\Q_p/\Z_p(*)) = H^1_\et(K,\nu_\infty(*))$ ne v\'erifie pas les axiomes
des modules de cycles de Rost \`a cause du d\'efaut de puret\'e de la cohomologie de
Hodge-Witt logarithmique. On peut toutefois utiliser la suite exacte \eqref{eqLK}, \`a
condition de savoir que les correspondances de Chow op\`erent sur les groupes de Chow
sup\'erieurs \'etales. Ce fait est r\'edig\'e dans \cite[app. A]{cell0}, sous r\'eserve de
v\'erification de certaines fonctorialit\'es des groupes de Chow sup\'erieurs (sans la
topologie \'etale): ibid., A.1.

Par ailleurs, dans la proposition \ref{p4.1} b), il faut exiger que $k$ soit fini si on veut la
finitude de $H^3_\nr(X,\Q_p/\Z_p(2))$ en conclusion.
\end{rema}

\subsection{Une cons\'equence d'une conjecture de Bass}\label{conjbass}

\medskip

Soit $X$ lisse sur un corps $k$, et  soit $l$ un nombre premier distinct de la  caract\'eristique de $k$.
 D'apr\`es Bloch et Ogus \cite{bo}, on a les suites exactes
 \begin{multline*}0 \to H^1(X,\H^2(\mu_{l^n}^{\otimes 2})) \to H^3_{\et}(X,\mu_{l^n}^{\otimes
2})\\
  \to H^3_\nr(X,\mu_{l^n}^{\otimes 2}) \to CH^2(X)/l^n  \to H^4_{\et}(X,\mu_{l^n}^{\otimes 2})
\end{multline*}
 et par passage \`a la limite sur $n$
\begin{multline*}0 \to H^1(X,\H^2(\Q_{l}/\Z_{l}(2))) \to  H^3_{\et}(X,\Q_{l}/\Z_{l}(2)) \\
 \to
  H^3_\nr(X,\Q_{l}/\Z_{l}(2)) \to CH^2(X)\otimes \Q_{l} /\Z_{l} \to
H^4_{\et}(X,\Q_{l}/\Z_{l}(2)).
\end{multline*}
 
Soit $k=\F$ un corps fini. Supposons en outre $X/\F$ projective, lisse, g\'eom\'e\-tri\-quement
int\`egre.
On sait (ceci utilise  le th\'eor\`eme de Merkurjev-Suslin et
les conjectures de Weil, voir par exemple \cite{ctcime})
 qu'alors la derni\`ere suite se r\'e\'ecrit
\begin{multline*}
0 \to CH^2(X)\{l\} \to H^3_{\et}(X,\Q_{l}/\Z_{l}(2))\\
\to H^3_\nr(X,\Q_{l}/\Z_{l}(2)) \to CH^2(X)\otimes \Q_{l} /\Z_{l} \to
H^4_{\et}(X,\Q_{l}/\Z_{l}(2))
\end{multline*}
et l'on sait  (voir \cite{ctss}) que le groupe $H^3_{\et}(X,\Q_{l}/\Z_{l}(2))$ est fini, et nul pour presque tout
$l$.

 \medskip
 
L'\'enonc\'e suivant est donc connu depuis les ann\'ees 1980.

\begin{prop}
Soit $k=\F$ un corps fini, $p={\rm car}(\F)$.  Soit $X/\F$ projective, lisse, g\'eom\'etriquement int\`egre.
Soit $l \neq p$ un nombre premier.
\begin{itemize}
\item[(a)] Si le groupe $CH^2(X)$ est un groupe de type fini, alors  
le groupe $$H^3_\nr(X,\Q_{l}/\Z_{l}(2))$$ est de cotype fini.
\item[(b)]  Le groupe $CH^2(X)/l^n$ est fini si et seulement si le groupe
$H^3_\nr(X,\mu_{l^n}^{\otimes 2})$  est fini.
\item[(c)] Si le groupe $H^3_\nr(X,\Q_{l}/\Z_{l}(2))$ est de cotype fini,
alors pour tout entier $n>0$, les groupes  $H^3_\nr(X,\mu_{l^n}^{\otimes 2})$ 
et $CH^2(X)/l^n$ sont finis.
\item[(d)] Le groupe $H^3_\nr(X,\Q_{l}/\Z_{l}(2))$  est fini si et seulement si 
l'application $CH^2(X)\otimes \Q_{l} /\Z_{l} \to H^4_{\et}(X,\Q_{l}/\Z_{l}(2))$ a un noyau
fini.
\end{itemize}
\end{prop}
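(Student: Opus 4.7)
La d\'emonstration est enti\`erement formelle \`a partir des deux suites exactes issues de la th\'eorie de Bloch--Ogus rappel\'ees juste avant l'\'enonc\'e, et du fait mentionn\'e que tous les groupes de cohomologie \'etale en jeu ($H^j_\et(X,\mu_{l^n}^{\otimes 2})$ et $H^3_\et(X,\Q_l/\Z_l(2))$) sont finis, ces finitudes reposant sur les conjectures de Weil et le th\'eor\`eme de Merkurjev--Suslin.

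D\'ecoupons la suite longue \`a coefficients $\Q_l/\Z_l(2)$ en la suite exacte courte
\[0 \to Q \to H^3_\nr(X,\Q_l/\Z_l(2)) \to \ker\bigl(CH^2(X)\otimes\Q_l/\Z_l \to H^4_\et(X,\Q_l/\Z_l(2))\bigr) \to 0,\]
o\`u $Q$ est un quotient du groupe fini $H^3_\et(X,\Q_l/\Z_l(2))$. Pour (a), l'hypoth\`ese que $CH^2(X)$ est de type fini rend $CH^2(X)\otimes\Q_l/\Z_l$ fini, donc les deux termes extr\^emes sont finis et $H^3_\nr(X,\Q_l/\Z_l(2))$ aussi, ce qui est plus fort que le cotype fini. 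Pour (d), la finitude de $Q$ donne imm\'ediatement l'\'equivalence entre la finitude de $H^3_\nr(X,\Q_l/\Z_l(2))$ et celle du noyau indiqu\'e.

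Pour (b), le m\^eme d\'ecoupage appliqu\'e \`a la suite \`a coefficients $\mu_{l^n}^{\otimes 2}$ donne
\[0 \to Q_n \to H^3_\nr(X,\mu_{l^n}^{\otimes 2}) \to \ker\bigl(CH^2(X)/l^n \to H^4_\et(X,\mu_{l^n}^{\otimes 2})\bigr) \to 0,\]
avec $Q_n$ quotient du groupe fini $H^3_\et(X,\mu_{l^n}^{\otimes 2})$. Puisque $CH^2(X)/l^n$ diff\`ere du noyau ci-dessus par un sous-groupe du groupe fini $H^4_\et(X,\mu_{l^n}^{\otimes 2})$, on obtient l'\'equivalence entre la finitude de $H^3_\nr(X,\mu_{l^n}^{\otimes 2})$ et celle de $CH^2(X)/l^n$, soit (b).

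Pour (c) enfin, j'utiliserais la cons\'equence du th\'eor\`eme de Bloch--Kato rappel\'ee au \S\ref{outils}: le morphisme $H^3_\nr(X,\mu_{l^n}^{\otimes 2}) \to H^3_\nr(X,\Q_l/\Z_l(2))$ est injectif d'image \'egale \`a $H^3_\nr(X,\Q_l/\Z_l(2))[l^n]$. Si $H^3_\nr(X,\Q_l/\Z_l(2))$ est de cotype fini, toutes ses $l^n$-torsions sont finies, donc $H^3_\nr(X,\mu_{l^n}^{\otimes 2})$ aussi, et (b) fournit la finitude de $CH^2(X)/l^n$. Aucun obstacle s\'erieux n'est \`a pr\'evoir; le seul point qui demande de la vigilance est cette identification $H^3_\nr(X,\mu_{l^n}^{\otimes 2}) = H^3_\nr(X,\Q_l/\Z_l(2))[l^n]$ utilis\'ee dans (c), qui repose sur Bloch--Kato (Rost--Voevodsky) mais est d\'ej\`a signal\'ee dans le texte.
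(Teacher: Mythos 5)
Votre d\'ecoupage des deux suites exactes de Bloch--Ogus est exactement l'argument que le texte a en vue (le papier ne r\'edige d'ailleurs aucune d\'emonstration : il d\'eclare l'\'enonc\'e \og connu depuis les ann\'ees 1980 \fg{} comme cons\'equence imm\'ediate des deux suites affich\'ees juste avant, avec la finitude de $H^3_{\et}(X,\Q_l/\Z_l(2))$ tir\'ee de \cite{ctss} et celle des $H^j_{\et}(X,\mu_{l^n}^{\otimes 2})$ tir\'ee de la finitude de la cohomologie galoisienne de $\F$). Vos points (b) et (d) sont corrects, et votre point (c), via l'identification $H^3_\nr(X,\mu_{l^n}^{\otimes 2}) = H^3_\nr(X,\Q_l/\Z_l(2))[l^n]$ de Merkurjev--Suslin (seul le poids~$2$ est en jeu, donc pas besoin de Bloch--Kato g\'en\'eral), est \'egalement correct et coh\'erent avec la remarque du \S 5 du texte.

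En revanche, votre traitement de (a) contient une erreur v\'eritable : l'affirmation \og l'hypoth\`ese que $CH^2(X)$ est de type fini rend $CH^2(X)\otimes\Q_l/\Z_l$ fini \fg{} est fausse. Pour $A$ de type fini de rang $r$, on a $A\otimes\Q_l/\Z_l \simeq (\Q_l/\Z_l)^r$ (la torsion finie est tu\'ee par divisibilit\'e de $\Q_l/\Z_l$), groupe qui est \emph{infini} d\`es que $r\geq 1$ --- ce qui est le cas typique. Votre conclusion \og $H^3_\nr(X,\Q_l/\Z_l(2))$ est fini, ce qui est plus fort que le cotype fini \fg{} est donc injustifi\'ee, et elle d\'epasse ce que la proposition affirme : l'\'enonc\'e (a) dit pr\'ecis\'ement \og de cotype fini \fg{} parce que le noyau de $CH^2(X)\otimes\Q_l/\Z_l \to H^4_{\et}(X,\Q_l/\Z_l(2))$ peut \^etre un groupe divisible infini. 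La correction est d'une ligne : $(\Q_l/\Z_l)^r$ est de cotype fini, un sous-groupe d'un groupe de cotype fini l'est aussi, et une extension d'un groupe de cotype fini par un groupe fini (votre $Q$) est de cotype fini ; votre suite exacte courte donne alors exactement (a) tel qu'\'enonc\'e.
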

 
En 1968, Bass a demand\'e 
si  le  groupe  $K_{0}(X)$
d'une vari\'et\'e $X$  lisse sur un corps fini est de type fini.
Si c'est le cas, en utilisant le th\'eor\`eme de Riemann-Roch sans d\'enominateurs
\cite[Formule (15)]{grothendieckchern}
cela implique que  le  groupe $CH^2(X)$  est de type fini, 
et donc que le groupe $H^3_\nr(X,\Q_{l}/\Z_{l}(2))$ est de cotype fini.

\subsection{Rappels sur les conjectures de Tate et Beilinson}\label{state0}

Soit $\F$ un corps fini de caract\'eristique $p$.  Fixons une cl\^oture alg\'ebrique
$\overline{\F}$ de $\F$. Soit $G=\Gal(\overline{\F}/\F)$. 

Soit $X$ une $\F$-vari\'et\'e projective
lisse et soit $i$ un
 entier $\geq 0$, enfin soit $l$ un nombre premier ($l=p$ est permis). La conjecture de Tate
``cohomologique" pour $(X,i,l)$ est:

\begin{conj} \label{ctate}  
L'application classe de cycle g\'eom\'etrique 
\[CH^{i}(X) \otimes \Q_{l} \to H^{2i}_{\et}(\bar X,\Q_{l}(i))^G\]
est surjective.
\end{conj}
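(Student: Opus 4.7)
The statement is Tate's cohomological conjecture in codimension $i$ for varieties over a finite field, one of the celebrated open problems of arithmetic geometry. Any serious proof plan must be qualified accordingly, but the broad strategy following Tate's original paper is as follows. First, one analyses the Galois representation $H^{2i}_\et(\bar X,\Q_l(i))$ via the Weil conjectures: Deligne's theorem shows that, after the Tate twist, the Frobenius eigenvalues are algebraic numbers of complex absolute value one, and the $G$-invariants form the generalised eigenspace for the eigenvalue $1$. The conjecture then asks that this subspace be spanned by classes of algebraic cycles defined over $\F$.

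For $i=1$, the Kummer sequence identifies the cokernel of the cycle map with (a piece of) the Tate module of $\Br(\bar X)\{l\}$, so the conjecture becomes equivalent to the finiteness of $\Br(X)\{l\}$; this is known for abelian varieties and products of curves by Tate himself, and for K3 surfaces by the work of Nygaard--Ogus, Madapusi Pera, Charles, Maulik and others in characteristic different from $2$. For $i>1$ there is no general reduction; the natural attempts are Lefschetz-type hyperplane section arguments, fibration or product arguments when $X$ has a useful structure, lifting to characteristic zero and applying Hodge theory (which meets the Hodge conjecture), or motivic approaches relating the problem to Beilinson's equality of numerical and homological equivalence combined with semisimplicity of geometric Frobenius.

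The hard part, and the essential obstruction, is the construction of algebraic cycles from cohomological data: no systematic procedure is known, and every existing proof in a special case relies on specific geometric input (moduli interpretations, Kuga--Satake constructions, Shioda-style generic cycle computations, and so on). In the present paper the conjecture is therefore not something one expects to establish; it serves as a hypothesis whose consequences for $CH^2(X)$ and $H^3_\nr(X,\Q/\Z(2))$ are the actual object of study.
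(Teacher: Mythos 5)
You are right that this statement is Tate's conjecture and is open: the paper offers no proof, stating it purely as a conjecture whose only ``treatment'' consists of the equivalences with Conjectures \ref{ctate1} and \ref{ctateforte} in Proposition \ref{diversesimplications} and the reformulation for $i=1$ via the Kummer sequence and the finiteness of $\Br(X)\{l\}$ in Remark \ref{tateviaBrauer} --- both of which match your own remarks --- and then uses it solely as a hypothesis (e.g.\ in Theorems \ref{copiekahn}, \ref{corollairebrauermaninF(C)} and \ref{parasugeneralise}). Your assessment of its status, its role in the paper, and the known special cases is accurate, so there is no proof to compare.
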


En utilisant les conjectures de Weil, on voit que cet \'enonc\'e est \'equivalent au suivant:

\begin{conj} \label{ctate1}  
L'application cycle \eqref{eqtate}
$$CH^{i}(X) \otimes \Z_{l} \to H^{2i}_{\et}(X,\Z_{l}(i))$$
a un conoyau fini.
\end{conj}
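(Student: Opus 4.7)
Le plan est de d\'emontrer l'\'equivalence des conjectures \ref{ctate} et \ref{ctate1} en passant de la cohomologie \'etale sur $X$ \`a celle sur $\ovX=X\times_{\F}\ovF$ via la suite spectrale de Hochschild-Serre
$$E_2^{p,q}=H^p(G,H^q_\et(\ovX,\Z_l(i))) \Longrightarrow H^{p+q}_\et(X,\Z_l(i)),$$
puis \`a comparer les propri\'et\'es de finitude \`a l'aide des poids fournis par les conjectures de Weil (th\'eor\`eme de Deligne pour $l\ne p$, variante cristalline via la cohomologie de Hodge-Witt logarithmique pour $l=p$).

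D'abord, comme $G=\Gal(\ovF/\F)\simeq\hat\Z$ est de dimension cohomologique $1$, la suite spectrale d\'eg\'en\`ere en une suite exacte courte
$$0 \to H^1(G,H^{2i-1}_\et(\ovX,\Z_l(i))) \to H^{2i}_\et(X,\Z_l(i)) \to H^{2i}_\et(\ovX,\Z_l(i))^G \to 0.$$
Pour $V$ un $\Z_l$-module de type fini muni d'une action continue du g\'en\'erateur $F$ de $G$, on a $H^1(G,V)\simeq V/(F-1)V$. Or les valeurs propres de $F$ sur $H^{2i-1}_\et(\ovX,\Q_l(i))$ sont des nombres de Weil de poids $(2i-1)-2i=-1$, de valeur absolue $|\F|^{-1/2}$; en particulier $1$ n'en est pas valeur propre, donc $F-1$ est inversible sur cet espace vectoriel, et $H^1(G,H^{2i-1}_\et(\ovX,\Z_l(i)))$ est un $\Z_l$-module de type fini et de torsion, donc \emph{fini}.

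Ensuite, par compatibilit\'e de l'application cycle avec le changement de base, celle-ci se factorise comme
$$CH^i(X)\otimes\Z_l \to H^{2i}_\et(X,\Z_l(i)) \twoheadrightarrow H^{2i}_\et(\ovX,\Z_l(i))^G.$$
L'\'etape pr\'ec\'edente montre que les deux conoyaux ne diff\`erent que d'un quotient du groupe fini $H^1(G,H^{2i-1}_\et(\ovX,\Z_l(i)))$; ainsi la finitude du conoyau dans $H^{2i}_\et(X,\Z_l(i))$ \'equivaut \`a celle du conoyau de $CH^i(X)\otimes\Z_l \to H^{2i}_\et(\ovX,\Z_l(i))^G$. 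Puisque la cible est un $\Z_l$-module de type fini, ce dernier conoyau est fini si et seulement si l'application devient surjective apr\`es tensorisation par $\Q_l$. Or, par exactitude du foncteur $-\otimes_{\Z_l}\Q_l$ appliqu\'e \`a $V^G=\Ker(F-1\colon V\to V)$, on a
$$\bigl(H^{2i}_\et(\ovX,\Z_l(i))^G\bigr)\otimes_{\Z_l}\Q_l \;=\; H^{2i}_\et(\ovX,\Q_l(i))^G,$$
et la condition obtenue est exactement celle de la conjecture \ref{ctate}, d'o\`u l'\'equivalence cherch\'ee.

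Le point principal \`a soigner est l'argument de poids qui assure la finitude du terme $H^1$ dans la suite exacte de Hochschild-Serre: pour $l\ne p$ il repose directement sur Deligne, tandis que pour $l=p$ il faut invoquer les r\'esultats de finitude et le th\'eor\`eme de puret\'e des poids pour la cohomologie cristalline/Hodge-Witt logarithmique (cf. Milne, Gros), compatibles avec la d\'efinition de l'application cycle $p$-adique rappel\'ee au \S\,2. Les autres \'etapes, une fois cette finitude acquise, sont des manipulations formelles standard sur les $\Z_l$-modules de type fini.
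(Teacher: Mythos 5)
Votre démonstration est correcte et suit exactement la voie que l'article sous-entend par la phrase «~En utilisant les conjectures de Weil, on voit que cet énoncé est équivalent au suivant~»~: la suite exacte courte de Hochschild--Serre $0\to H^1(G,H^{2i-1}_\et(\ovX,\Z_l(i)))\to H^{2i}_\et(X,\Z_l(i))\to H^{2i}_\et(\ovX,\Z_l(i))^G\to 0$, la finitude du terme $H^1$ par l'argument de poids de Deligne, puis la comparaison des conoyaux sur un $\Z_l$-module de type fini --- c'est aussi l'argument que l'article réutilise dans la preuve du lemme \ref{lbrf} et qu'il attribue, pour le traitement uniforme de $l\ne p$ et $l=p$, aux propositions 8.2 et 8.4 de Milne. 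Votre réserve sur le cas $l=p$ (pureté des poids via la cohomologie cristalline/Hodge--Witt logarithmique) est précisément le point que l'article délègue à Milne, donc rien ne manque.
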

 
La forme ``forte" de la conjecture de Tate est:

\begin{conj}\label{ctateforte} L'ordre du p\^ole de $\zeta(X,s)$ en $s=i$ est \'egal au rang du groupe des cycles de codimension $i$ sur $X$, modulo l'\'equivalence num\'erique.
\end{conj}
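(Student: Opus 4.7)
\medskip

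\emph{Sch\'ema d'une strat\'egie de d\'emonstration.} Le plan est de ramener la conjecture \ref{ctateforte} \`a la combinaison de deux ingr\'edients suppl\'ementaires : d'une part la conjecture de Tate cohomologique \ref{ctate} pour $(X,i,l)$, d'autre part la conjecture standard $D$ de Grothendieck, assurant la co\"\i ncidence entre \'equivalence homologique $l$-adique et \'equivalence num\'erique sur les cycles de codimension $i$ de $X$.

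La premi\`ere \'etape consistera \`a factoriser la fonction z\^eta : la formule des traces de Grothendieck--Lefschetz donne
\[ \zeta(X,s) = \prod_{j=0}^{2d} P_j(q^{-s})^{(-1)^{j+1}}, \qquad P_j(T) = \det\bigl(1-F\cdot T \mid H^j_\et(\ovX,\Q_l)\bigr), \]
et les conjectures de Weil d\'emontr\'ees par Deligne entra\^\i nent que les racines de $P_j$ sont de valeur absolue $q^{-j/2}$. Seul le facteur $j=2i$ peut alors contribuer \`a un p\^ole de $\zeta(X,s)$ en $s=i$, et l'ordre de ce p\^ole \'egale la multiplicit\'e de $q^{-i}$ comme racine de $P_{2i}(T)$, c'est-\`a-dire $\dim_{\Q_l} H^{2i}_\et(\ovX,\Q_l(i))^G$.

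Dans un second temps, sous la conjecture \ref{ctate}, la fl\`eche classe de cycle g\'eom\'etrique $CH^i(X)\otimes\Q_l \to H^{2i}_\et(\ovX,\Q_l(i))^G$ est surjective ; la dimension pr\'ec\'edente co\"\i ncide alors avec le rang du groupe des cycles de codimension $i$ sur $X$ modulo \'equivalence homologique $l$-adique. Il restera \`a comparer ce rang au rang modulo \'equivalence num\'erique.

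Cette derni\`ere comparaison est le c\oe ur de la difficult\'e. L'\'equivalence num\'erique \'etant plus grossi\`ere que l'\'equivalence homologique, l'in\'egalit\'e rang num\'erique $\leq$ rang homologique est automatique ; l'in\'egalit\'e inverse est pr\'ecis\'ement la conjecture standard $D$ pour les cycles de codimension $i$ sur $X$. Connue en codimension $1$ par le th\'eor\`eme de Matsusaka, elle demeure largement ouverte en codimension sup\'erieure. Comme l'a not\'e Tate, la conjecture forte \ref{ctateforte} pour $(X,i)$ \'equivaut in fine \`a la conjonction de \ref{ctate}, de la semi-simplicit\'e de l'action du Frobenius sur $H^{2i}_\et(\ovX,\Q_l(i))$, et de cette conjecture standard $D$, ce qui rend compte de son statut ouvert.
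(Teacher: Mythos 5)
Il n'y a ici pas de d\'emonstration \`a laquelle comparer la v\^otre : l'\'enonc\'e \ref{ctateforte} est la forme forte de la conjecture de Tate, l'une des conjectures ouvertes sur lesquelles l'article s'appuie, et le texte n'en donne aucune preuve. Il se borne \`a la formuler, puis \`a consigner dans la proposition \ref{diversesimplications} (d'apr\`es les propositions 8.2 et 8.4 de Milne et le th\'eor\`eme 2.9 de Tate) les relations logiques entre \ref{ctateforte}, la conjecture cohomologique \ref{ctate}, la condition de semi-simplicit\'e $S^i(X,l)$ et la co\"\i ncidence des \'equivalences homologique et num\'erique. Votre texte n'est donc pas une preuve --- il ne pouvait pas l'\^etre --- mais une r\'eduction \`a d'autres conjectures ouvertes, ce que vous reconnaissez honn\^etement ; en tant que telle, elle co\"\i ncide pour l'essentiel avec ce que l'article enregistre.

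Une impr\'ecision m\'erite toutefois d'\^etre relev\'ee dans votre premi\`ere \'etape. L'ordre du p\^ole de $\zeta(X,s)$ en $s=i$ est bien la multiplicit\'e de $q^{-i}$ comme racine de $P_{2i}(T)$, mais celle-ci est la dimension de l'espace propre \emph{g\'en\'eralis\'e} de Frobenius pour la valeur propre $q^i$, et non $\dim_{\Q_l} H^{2i}_{\et}(\ovX,\Q_l(i))^G$ : l'\'egalit\'e de ces deux nombres est pr\'ecis\'ement la condition $S^i(X,l)$ ($1$ n'est pas racine multiple du polyn\^ome minimal de Frobenius), que vous n'invoquez qu'au dernier paragraphe ; sans elle, on sait seulement que $\dim_{\Q_l} H^{2i}_{\et}(\ovX,\Q_l(i))^G$ minore l'ordre du p\^ole. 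Notez aussi que votre conjonction finale (\ref{ctate} $+$ semi-simplicit\'e $+$ conjecture standard $D$) est bien \'equivalente \`a \ref{ctateforte}, mais n'est pas minimale : d'apr\`es la partie (b) du th\'eor\`eme 2.9 de Tate, reprise dans la proposition \ref{diversesimplications}, la conjecture \ref{ctate} pour $(X,i,l)$ jointe \`a $S^i(X,l)$ suffit d\'ej\`a, la co\"\i ncidence des \'equivalences homologique et num\'erique en codimension $i$ en d\'ecoulant alors ; et d'apr\`es la partie (c), la conjecture \ref{ctate} en codimensions $i$ et $d-i$ jointe \`a cette co\"\i ncidence suffit \'egalement.
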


Cette conjecture entra\^\i ne la conjecture \ref{ctate}, \emph{cf}. \S \ref{s3.5}.

Soit $A^{i}(X)$ le groupe des cycles de codimension $i$  sur $X$ modulo l'\'equiva\-lence num\'erique. On sait que c'est un groupe ab\'elien libre de type fini.  La conjecture de Beilinson est :

\begin{conj}\label{cbeil} L'application  $CH^{i}(X)\otimes \Q \to A^{i}(X) \otimes \Q$
est un isomorphisme.
 \end{conj}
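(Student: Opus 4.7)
Puisque la conjecture \ref{cbeil} est la c\'el\`ebre conjecture de Beilinson sur les cycles alg\'ebriques pour les vari\'et\'es projectives et lisses sur un corps fini, mon plan n'est pas de la d\'emontrer---elle est ouverte en codimension interm\'ediaire---mais d'esquisser la strat\'egie classique de r\'eduction \`a d'autres conjectures standard. La surjectivit\'e de $CH^i(X)\otimes \Q \to A^i(X)\otimes \Q$ est tautologique, puisque l'\'equivalence num\'erique est par d\'efinition un quotient de l'\'equivalence rationnelle; tout le contenu r\'eside donc dans l'injectivit\'e, c'est-\`a-dire l'\'enonc\'e que tout cycle num\'eriquement trivial est, modulo torsion, rationnellement trivial.

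Fixons un premier $l \neq p$. L'application classe de cycle
\[CH^i(X) \otimes \Q_l \to H^{2i}_{\et}(\ovX, \Q_l(i))^G\]
se factorise \`a travers le quotient par l'\'equivalence homologique $l$-adique; la conjecture \ref{ctate} (Tate cohomologique) en assure la surjectivit\'e. Sous la forme forte \ref{ctateforte}, combin\'ee \`a la semi-simplicit\'e de l'action du Frobenius g\'eom\'etrique sur $H^{2i}(\ovX, \Q_l(i))$, on sait que $\dim_{\Q_l} H^{2i}(\ovX,\Q_l(i))^G$ \'egale le rang du groupe $A^i(X)$. La conjecture standard $D$ de Grothendieck (co\"\i ncidence rationnelle des \'equivalences homologique et num\'erique) donnerait alors que l'image de la classe de cycle est isomorphe \`a $A^i(X)\otimes \Q_l$. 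Pour conclure, il resterait \`a voir que le noyau de la classe de cycle, soit $CH^i(X)_{\hom} \otimes \Q_l$, est nul---c'est la conjecture d'annulation de Beilinson pour les cycles homologiquement triviaux sur un corps fini---; le th\'eor\`eme de Jannsen sur la semi-simplicit\'e de la cat\'egorie des motifs num\'eriques \`a coefficients rationnels permettrait alors de d\'ecomposer le motif de $X$ et de conclure que $CH^i(X)\otimes \Q \to A^i(X)\otimes \Q$ est un isomorphisme.

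L'obstacle principal est double: d'une part, il faut disposer de la conjecture de Tate sous sa forme \ref{ctateforte} pour contr\^oler la taille de l'image de la classe de cycle; d'autre part, et surtout, il faut annuler $CH^i(X)_\hom \otimes \Q$, ce qui est une assertion aussi profonde que la conjecture \ref{cbeil} elle-m\^eme. Pour $i=1$ on conclut gr\^ace \`a la finitude de $\operatorname{Pic}^0(X)(\F)$, et pour $i=\dim X$ gr\^ace \`a la th\'eorie du corps de classes sup\'erieure d'apr\`es Kato-Saito; mais en codimension interm\'ediaire aucun m\'ecanisme g\'en\'eral n'est connu pour construire assez de cycles alg\'ebriques, ou inversement pour contr\^oler les cycles num\'eriquement triviaux. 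C'est pourquoi l'\'enonc\'e est donn\'e ici comme rappel conjectural, destin\'e \`a \^etre invoqu\'e comme hypoth\`ese (conjointement avec Tate) dans la suite du texte.
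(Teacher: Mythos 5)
Vous avez correctement identifi\'e que l'\'enonc\'e \ref{cbeil} est la conjecture de Beilinson (co\"{\i}ncidence des \'equivalences rationnelle et num\'erique \`a coefficients $\Q$ sur un corps fini), que l'article \'enonce sans d\'emonstration et n'invoque ensuite que comme hypoth\`ese, conjointement avec la conjecture \ref{ctateforte} ; il n'y a donc pas de preuve dans l'article \`a laquelle comparer, et votre parti pris de ne pas la d\'emontrer est le seul correct, votre esquisse (surjectivit\'e tautologique, r\'eduction de l'injectivit\'e \`a la conjecture standard $D$ et \`a la nullit\'e de $CH^i(X)_{\hom}\otimes\Q$, semi-simplicit\'e de Jannsen, cas connus $i=1$ via la finitude de $\Pic^0$ et $i=\dim X$ via Kato--Saito) \'etant par ailleurs fid\`ele \`a l'\'etat de l'art. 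Signalons seulement que le seul r\'esultat partiel que l'article cite dans cette direction est le th\'eor\`eme \ref{annens1} (d\^u au second auteur) : pour $X \in B_\Tate(\F)$, c'est-\`a-dire de type ab\'elien et v\'erifiant la conjecture \ref{ctateforte}, la conjecture \ref{ctate} entra\^{\i}ne la conjecture \ref{cbeil} --- compl\'ement utile \`a votre discussion, qui ne mentionne pas ce m\'ecanisme propre aux vari\'et\'es de type ab\'elien.
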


\subsection{Les classes $B(\F)$, $B_\num(\F)$ et $B_\Tate(\F)$} Soit $V(\F)$ l'ensemble
des classes d'isomorphismes de vari\'et\'es projectives lisses sur $\F$. Rappelons de
\cite[p. 978, d\'ef. 1]{kahnENS}:

\begin{defi} \label{d3.1}
a) On note $B(\F)$ le sous-ensemble de $V(\F)$ form\'e des vari\'et\'es dont le motif de Chow
(\`a coefficients rationnels) est dans la sous-cat\'egorie \'epaisse rigide engendr\'ee par les
motifs d'Artin et les motifs de vari\'et\'es ab\'eliennes (ou de courbes, c'est la m\^eme
chose).\\ 
On dit que $X$ est \emph{de type ab\'elien} si $X\in B(\F)$.\\
b) On note $B_\Tate(\F)$ le sous-ensemble de $B(\F)$ form\'e des vari\'et\'es $X$ v\'erifiant
la conjecture \ref{ctateforte} en toute codimension.
 \end{defi}

On montre facilement que la condition $X\in B(\F)$ est \'equivalente \`a la suivante: il existe
une vari\'et\'e ab\'elienne $A$ d\'efinie sur $\F$ telle que le motif de Chow de $X$ devienne
facteur direct de celui de $A$ apr\`es une extension finie (ou sur la cl\^oture alg\'ebrique).

\begin{propr}\label{ex2.1}\
\begin{thlist} 
\item $B(\F)$ est stable par produits directs.
\item Si $f:X\to Y$ est un morphisme surjectif, alors $X\in B(\F)$ $\Rightarrow$ $Y\in B(\F)$
(le motif de $Y$ est facteur direct de celui de $X$).
\item Si $X\in B(\F)$ et $\dim X\le 3$, tout \'eclatement de $X$ de centre lisse est dans
$B(\F)$.
\item Par un r\'esultat c\'el\`ebre de Katsura-Shioda \cite{ks}, les hypersurfaces de Fermat sont dans
$B(\F)$.
\item On sait montrer que de nombreuses vari\'et\'es ab\'eliennes sont dans $B_\Tate(\F)$, par
exemple les  produits de courbes elliptiques (Spie\ss\ \cite{spiess}). Pour d'au\-tres exemples, voir \cite[Ex. 1
c)]{kahnENS}.
\item Si  $X\in B(\F)$ et $\dim X\le 3$, alors $X\in B_\Tate(\F)$.   Cela r\'esulte de la proposition \ref{diversesimplications} b) (ii) ci-dessous.
\item  Notons $B_\num(\F)$ le sous-ensemble de $V(\F)$ d\'efini comme $B(\F)$, mais en rempla\c
cant \'equivalence rationnelle par \'equivalence num\'erique:
 il contient \'evidemment
$B(\F)$ et lui est \'egal sous la conjecture \ref{cbeil}. D'apr\`es Milne \cite[rem.
2.7]{milne}, la conjecture
\ref{ctateforte}  implique que toute
$\F$-vari\'et\'e projective  lisse
$X$ a sa classe dans $B_\num(\F)$ (ce r\'esultat utilise les
conjectures de Weil et un th\'eor\`eme de Honda). Ainsi, 
les conjectures \ref{ctateforte} et
\ref{cbeil} entra\^\i nent que \emph{toute vari\'et\'e projective lisse est dans $B_\Tate(\F)$}.

\end{thlist}
\end{propr}

\subsection{Relations entre les conjectures \ref{ctate}, \ref{ctateforte} et \ref{cbeil}}\label{s3.5}

\begin{prop}\label{diversesimplications} 
Soit $(X,i)$ comme ci-dessus, avec $\dim X = d$, et soit $l$ un nombre premier. 
Alors: \\
a) La conjecture \ref{ctateforte} pour $(X,i)$ implique la conjecture  \ref{ctate} pour
$(X,i,l)$. 
 Elle implique aussi, pour tout $l$:
\begin{enumerate}
\item La condition $S^i(X,l)$: la composition
\[H^{2i}(\ovX,\Q_l(i))^G\to H^{2i}(\ovX,\Q_l(i))\to H^{2i}(\ovX,\Q_l(i))_G\]
est un isomorphisme.
\item \'Equivalence homologique $l$-adique et \'equivalence num\'erique co\"\i ncident sur $CH^i(X)\otimes \Q_l$.
\end{enumerate}
b) R\'eciproquement :
 \begin{thlist}
\item 
 La conjecture \ref{ctate} pour $(X,1,l)$ implique la conjecture  \ref{ctateforte} pour $(X,1)$
et $(X,d-1)$ (et donc la conjecture \ref{ctate} pour $(X,d-1,l)$).
\item  Si $X\in B(\F)$, 
 la condition $S^i(X,l)$ est vraie pour tout $(i,l)$. De plus, la conjecture  \ref{ctate} pour $(X,i,l)$ et $(X,d-i,l)$ implique la
conjecture \ref{ctateforte} pour $(X,i)$ et $(X,d-i)$. 
Ces conjectures sont vraies pour $i=1$.

\end{thlist}
c) Dans les cas de b), la conjecture \ref{ctate} ne d\'epend pas du choix de $l$.\\
d) Si $X\in B(\F)$, alors $X\in B_\Tate(\F)$ $\iff$ il existe $l$ tel que $X$ v\'erifie la
conjecture \ref{ctate} en toute codimension.\\ e) Si $X\in B(\F)$, alors les conjectures
\ref{ctate} (pour tout $l$) et \ref{ctateforte} sont vraies pour
$(X,1)$ et $(X,d-1)$.
\end{prop}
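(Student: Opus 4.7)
The proposition collates several classical equivalences between Tate-type conjectures, so the plan is to reduce everything to a short list of numerical invariants and a chain of elementary inequalities, then bring in the deep input (Tate's theorem on abelian varieties, Poincaré duality, hard Lefschetz) only where strictly necessary. Concretely, fix $(X,i,l)$ and introduce:
\[
r_i(X)=\operatorname{rk} A^i(X),\quad h_i(X,l)=\dim_{\Q_l}\!\Im\bigl(CH^i(X)\otimes\Q_l\to H^{2i}_\et(\ovX,\Q_l(i))\bigr),
\]
\[
t_i(X,l)=\dim_{\Q_l} H^{2i}_\et(\ovX,\Q_l(i))^G,\quad n_i(X,l)=\text{multiplicité de la valeur propre }1\text{ de }F\text{ sur }H^{2i}_\et(\ovX,\Q_l(i)).
\]
By Deligne's proof des conjectures de Weil, combined avec la formule des traces de Grothendieck--Lefschetz, $n_i(X,l)$ est indépendant de $l$ et coïncide avec l'ordre du pôle de $\zeta(X,s)$ en $s=i$. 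Par définition de $S^i(X,l)$, on a $t_i\le n_i$ avec égalité ssi $S^i(X,l)$ est vraie. La classe de cycle tombe dans les invariants de Frobenius, et la compatibilité cycle/cup-produit montre que l'équivalence homologique est plus fine que la numérique, d'où les inégalités élémentaires $h_i\le t_i\le n_i$ et $h_i\ge r_i$.

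Pour a), sous la conjecture \ref{ctateforte} on a $r_i=n_i$, donc les inégalités $n_i=r_i\le h_i\le t_i\le n_i$ forcent toutes les égalités; on lit alors $h_i=t_i$ (conjecture \ref{ctate}), $t_i=n_i$ (condition $S^i(X,l)$), et $h_i=r_i$ (équivalence homologique = numérique sur $CH^i(X)\otimes\Q_l$).

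Pour b)(i), on part de l'égalité classique $h_1=r_1$ (équivalence homologique = numérique pour les diviseurs, conséquence de Matsusaka et du théorème de l'indice de Hodge par coupure hyperplane), combinée à $h_1=t_1$ (hypothèse \ref{ctate}). Il reste à prouver $S^1(X,l)$, que l'on tire du théorème d'isogénie de Tate appliqué à la variété de Picard (ou Albanese) de $X$: celui-ci fournit la semi-simplicité du Frobenius sur $H^1(\ovX,\Q_l)$, d'où sur $H^2(\ovX,\Q_l(1))$ restreint au sous-espace pertinent via la théorie de Poincaré. On conclut $r_1=n_1$ (\ref{ctateforte} en codimension $1$), et le cas $(X,d-1)$ s'obtient par le théorème de Lefschetz difficile et la dualité de Poincaré $H^2\times H^{2d-2}\to H^{2d}=\Q_l(-d)$, qui identifie le couple $(i,l)=(1,l)$ et $(d-1,l)$. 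Pour b)(ii), l'hypothèse $X\in B(\F)$ place le motif de Chow de $X$ comme facteur direct du motif d'une variété abélienne; la semi-simplicité du Frobenius sur la cohomologie des variétés abéliennes (Weil, Tate) est ainsi héritée par $X$, ce qui donne $S^i(X,l)$ pour tout $(i,l)$. La conjonction de \ref{ctate} en codimensions $i$ et $d-i$ combinée à la dualité de Poincaré rend l'accouplement d'intersection non dégénéré sur les images de cycles, ce qui force $h_i=r_i$, puis $r_i=n_i$ via les inégalités. L'énoncé pour $i=1$ est alors le théorème de Tate pour les diviseurs sur variétés abéliennes, transporté par l'inclusion motivique.

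Les points c), d), e) sont maintenant formels: l'indépendance en $l$ de c) découle de ce que $r_i$ et $n_i$ le sont dans les cas considérés, et que $h_i=r_i$ sous les hypothèses de b); la caractérisation de $B_\Tate(\F)$ en d) vient directement de b)(ii); et e) condense b)(i). L'obstacle principal, et en fait le seul point où l'on fait appel à un théorème substantiel, est l'invocation du théorème de Tate sur les variétés abéliennes (pour obtenir \ref{ctate} et la semi-simplicité pour les diviseurs, sans laquelle l'implication b)(i) ne passe pas); tout le reste est de la comptabilité linéaire entre $h_i$, $t_i$, $n_i$ et $r_i$, couplée à la dualité de Poincaré et au théorème de Lefschetz difficile (ce dernier étant démontré en cohomologie $l$-adique par Deligne).
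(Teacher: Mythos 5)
Your counting framework --- $r_i\le h_i\le t_i\le n_i$ with $r_i$ the numerical rank, $h_i$ the dimension of the image of the cycle map, $t_i$ the dimension of the Frobenius invariants, and $n_i$ the multiplicity of the eigenvalue $1$, identified with the pole order of $\zeta(X,s)$ at $s=i$ via Deligne --- is precisely the linear algebra underlying Milne's Propositions 8.2 and 8.4 \cite{milneamer} as summarized in \cite[th.~2.9]{tate}, which the paper's proof simply cites. So for a), for the second assertion of b)(ii) (where your perfectness-of-the-pairing argument on the generalized eigenvalue-$1$ subspaces, available because $S^i$ and $S^{d-i}$ both hold for $X\in B(\F)$, is the correct mechanism forcing $h_i=r_i$), and for the formal deductions c), d), e), your unfolding is sound and essentially the paper's argument made explicit. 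Two secondary caveats: the proposition is stated for \emph{every} prime $l$, including $l=p$, and your appeal to Deligne and \'etale cohomology covers only $l\ne p$ (the $l$-independence of $n_i$ at $p$ requires Katz--Messing, and the paper notes that Milne's arguments in \cite[\S 8]{milneamer} run uniformly at $p$, with $S^i(X,p)$ for $X\in B(\F)$ resting on a justification of Illusie). Also, in b)(i), passing from $(X,1)$ to $(X,d-1)$ needs, in your bookkeeping, both $n_1=n_{d-1}$ (which hard Lefschetz does give) \emph{and} $r_1=r_{d-1}$; the latter comes from the nondegeneracy of the numerical intersection pairing $A^1(X)\times A^{d-1}(X)\to\Z$, not from any cohomological identification --- hard Lefschetz does not act on Chow groups, so ``identifier le couple'' is not by itself an argument, though with $r_1=r_{d-1}$ stated your chain closes immediately.

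The genuine gap is your proof of $S^1(X,l)$ in b)(i). You derive it from the semisimplicity of Frobenius on $H^1(\ovX,\Q_l)$ (Tate's isogeny theorem \cite{tatefinite} via the Picard or Albanese variety), ``d'o\`u sur $H^2(\ovX,\Q_l(1))$ restreint au sous-espace pertinent via la th\'eorie de Poincar\'e''. This is a non sequitur: for a general $X$, $H^2(\ovX,\Q_l(1))$ is not a tensor construction on $H^1$ --- unlike for abelian varieties, where $H^*=\Lambda^*H^1$, which is exactly why the analogous transfer \emph{does} work in your b)(ii) --- and the cup-product image of $\Lambda^2H^1$ need not meet, let alone contain, the generalized eigenvalue-$1$ eigenspace of $H^2(\ovX,\Q_l(1))$. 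The Picard variety controls $H^1$ and the algebraic part $NS(\ovX)\otimes\Q_l$ of $H^2$; the content of $S^1$ is precisely to exclude an extra non-semisimple (possibly transcendental) eigenvalue-$1$ contribution, which no information about $H^1$ can rule out. This is a nontrivial classical fact, part of the Artin--Tate--Milne analysis of $H^2$ and the Brauer group, and the paper invokes it as ``connue en codimension $1$'' through \cite[\S 8]{milneamer} and \cite[th.~2.9]{tate}. Without it your chain only yields $r_1=h_1=t_1\le n_1$, and b)(i) --- hence also c), d), e), which rest on it --- does not close; the repair is to cite that result rather than attempt to deduce it from $H^1$.
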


\begin{proof}  Cela r\'esulte des propositions 8.2 et 8.4 de Milne
\cite{milneamer},  r\'esum\'ees dans \cite[th. 2.9]{tate}: les arguments de \cite[\S
8]{milneamer} sont explicitement r\'edig\'es sans distinction entre $l\ne p$ et
$l=p$\footnote{et valent en $l=p$ quelle que soit la d\'efinition de la classe de cycle
$p$-adique, puisqu'ils n'en utilisent que les propri\'et\'es formelles.}.  

 La condition $S^i(X,l)$ est \'equivalente \`a la suivante: 
$1$ n'est pas une racine multiple du
 polyn\^ome minimal de l'endomorphisme de Frobenius
sur $H^{2i}(\ovX,\Q_l(i))$.

D'apr\`es la partie (b) de \cite[th. 2.9]{tate}, la conjecture \ref{ctateforte} pour $(X,i)$
est \'equivalente \`a  la conjecture \ref{ctate} pour $(X,i,l)$, jointe \`a la condition
1 de a). D'autre part, la partie (c) de \cite[th. 2.9]{tate}
dit que la conjecture \ref{ctateforte} pour $(X,i)$ est \'equivalente \`a la conjecture \ref{ctate}
pour $(X,i,l)$ et $(X,d-i,l)$ jointe \`a la condition 2 de a). Cela d\'emontre a) et la premi\`ere affirmation de b) (i), puisque la 
condition 1 de a) est connue en codimension~$1$.  En appliquant ceci pour $i=1$ et en tenant compte de la premi\`ere
affirmation de (i), on obtient la seconde.

Pour $l\ne p$, la premi\`ere assertion de (ii) r\'esulte du fait que l'action de Frobenius sur
$H^*(\ovX,\Q_l)$ est semi-simple pour tout $X\in B(\F)$. Pour le voir, on se ram\`ene
imm\'ediatement au cas o\`u $X$ est une vari\'et\'e ab\'elienne $A$, et alors cela r\'esulte
de  \cite[lemme 1.9]{kahnENS}. Pour $l=p$, Milne affirme que $S^i(X,p)$ est encore vrai; Illusie
nous en a fourni une justification. La seconde assertion de (ii) en d\'ecoule alors, comme ci-dessus.

La troisi\`eme assertion de (ii) (cas $i=1$) d\'ecoule du th\'eor\`eme de Tate sur les endomorphismes de vari\'et\'es ab\'eliennes \cite{tatefinite}.
	
c) et d) d\'ecoulent alors de b), puisque la conjecture \ref{ctateforte} ne fait pas
intervenir $l$. \end{proof}

 \begin{rema}\label{tateviaBrauer} 
Par la suite exacte de Kummer, resp. d'Artin-Schreier-Witt, la
conjecture
\ref{ctate1} et donc la conjecture \ref{ctate} 
\'equivaut pour $i=1$  \`a la surjectivit\'e de l'application
$$CH^{1}(X) \otimes \Z_{l} \to H^{2}_{\et}(X,\Z_{l}(1))$$
ou encore \`a la finitude du sous-groupe de torsion $l$-primaire
du groupe de Brauer $\Br(X)$. En utilisant les parties a) et b) (i) de la proposition
\ref{diversesimplications}, on retrouve le fait connu que cette finitude ne d\'epend pas de $l$.
\end{rema}

\begin{theo}\label{annens1}
Si $X\in B_\Tate(\F)$, la conjecture \ref{ctate} implique la conjecture \ref{cbeil}.
\end{theo}

\begin{proof} C'est le contenu de \cite[th. 1.10]{kahnENS}.
\end{proof}

\subsection{Cons\'equences des conjectures de Tate et Beilinson}\label{state}

Dans \cite{kahnENS}, le second auteur a \'etabli un certain nombre de
propri\'et\'es de la cohomologie motivique de ces vari\'et\'es.
Pour les cycles de codimension~2, on a en particulier le
th\'eor\`eme suivant, dont la preuve utilise le th\'eor\`eme \ref{annens1}.
 
\begin{theo}[\cite{kahnENS}]\label{copiekahn}    Soit $\F$ un corps fini et $X$ une
$\F$-vari\'et\'e projective, lisse, g\'eom\'etriquement connexe, dans la classe  $ B_\Tate(\F)$.
Alors :
\begin{itemize}
\item[(a)] Le groupe $CH^2(X)$ est un groupe de type fini.
\item[(b)]  Le groupe $H^4_\et(X,\Z(2))$ est un groupe de type fini.
\item[(c)] Le groupe $H^3_\nr(X,\Q/\Z(2))$  est fini.
\end{itemize}
\end{theo}

Les \'enonc\'es (a) et  (c) r\'esultent de l'\'enonc\'e (b), cas particulier de  \cite[Cor.
3.10]{kahnENS},  et de la suite exacte \eqref{eqLK}.

Ainsi,  en tenant compte du r\'esultat de Milne  \ref{ex2.1}  (vii), la combinaison des
 conjectures \ref{ctateforte}  (conjecture de Tate sur les p\^oles de la fonction z\^eta)
 et \ref{cbeil} 
  (conjecture  de Beilinson sur la co\"{\i}ncidence de l'\'equivalence de Chow et de
l'\'equivalence
  num\'erique \`a coefficients $\Q$)
implique que
 \emph{$H^3_\nr(X,\Q/\Z(2))$  est fini pour toute vari\'et\'e projective lisse sur un corps
fini}.

Le corollaire suivant permet parfois de montrer que $H^3_\nr(X,\Q/\Z(2))$ est fini
sans
savoir que $X$ est dans  $ B_\Tate(\F)$. Il r\'esulte imm\'ediatement de la combinaison du th\'eor\`eme
\ref{copiekahn}, de la proposition \ref{p4.1} et de l'exemple \ref{ex2.1} (vi).

\begin{cor}\label{correspBTate}
Soit $X/\F$ une vari\'et\'e projective lisse. On suppose qu'il existe une
$\F$-vari\'et\'e projective lisse $Y$ et une correspondance de Chow
 $\gamma\in CH_{\dim Y}(Y\times 
X)\otimes \Q$ telles que:
\begin{thlist}
\item $Y\in B_\Tate(\F)$.
\item $\gamma_*:CH_0(Y_\Omega)\otimes \Q\to CH_0(X_\Omega)\otimes \Q$ est surjectif.
\end{thlist}
Alors $H^3_\nr(X,\Q/\Z(2))$ est fini.\qed
\end{cor}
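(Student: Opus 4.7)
L'id\'ee est de passer la finitude de $H^3_\nr(Y,\Q/\Z(2))$ \`a $H^3_\nr(X,\Q/\Z(2))$ au moyen de la correspondance $\gamma$, l'hypoth\`ese (i) fournissant la finitude de d\'epart via le th\'eor\`eme \ref{copiekahn}, et l'hypoth\`ese (ii) \'etant exactement celle du lemme \ref{l4.1} et de la proposition \ref{p4.1}.

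Je proc\'ederais en trois \'etapes. Premi\`erement, comme $Y\in B_\Tate(\F)$, le th\'eor\`eme \ref{copiekahn}~(c) entra\^\i ne la finitude de $H^3_\nr(Y,\Q/\Z(2))$, et \emph{a fortiori} celle de sa composante premi\`ere \`a $p=\car(\F)$, le groupe $H^3_\nr(Y,(\Q/\Z)'(2))$. Deuxi\`emement, l'hypoth\`ese~(ii) est exactement celle du lemme \ref{l4.1}, donc le motif birationnel \`a coefficients rationnels de $X$ est facteur direct de celui de $Y$. La proposition \ref{p4.1}~b), appliqu\'ee \`a $k=\F$ (qui est bien \`a cohomologie galoisienne finie en tout $l\ne p$), permet alors de conclure que $H^3_\nr(X,(\Q/\Z)'(2))$ est fini. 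Troisi\`emement, pour la partie $p$-primaire, on invoque la remarque suivant imm\'ediatement la proposition \ref{p4.1}: lorsque $k=\F$ est fini, on peut encore faire agir les correspondances de Chow sur la suite exacte \eqref{eqLK}, ce qui \'etend la proposition \ref{p4.1}~b) \`a $l=p$ et donne la finitude de $H^3_\nr(X,\Q_p/\Z_p(2))$. En recollant, on obtient la finitude annonc\'ee de $H^3_\nr(X,\Q/\Z(2))$.

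La seule \'etape qui n'est pas purement formelle est la troisi\`eme (partie $p$-primaire), puisque $K\mapsto H^1_\et(K,\nu_\infty(*))$ ne satisfait pas les axiomes des modules de cycles de Rost, ce qui interdit l'application directe de la m\'ethode utilis\'ee pour la proposition \ref{p4.1}. Mais comme signal\'e dans la remarque finale du paragraphe pr\'ec\'edent, elle s'obtient quand m\^eme en appliquant les correspondances au quotient $H^3_\nr=\bH^4_\et(\Z(2))/CH^2$ et en utilisant la fonctorialit\'e des groupes de Chow sup\'erieurs \'etales; cette partie n'exige rien au-del\`a des r\'ef\'erences d\'ej\`a invoqu\'ees.
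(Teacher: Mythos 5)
Votre d\'emonstration est correcte et suit pour l'essentiel la m\^eme voie que l'article, qui d\'eduit imm\'ediatement le corollaire de la combinaison du th\'eor\`eme \ref{copiekahn} et de la proposition \ref{p4.1} (via le lemme \ref{l4.1}). Votre troisi\`eme \'etape explicite, pour la partie $p$-primaire, le contenu de la remarque qui suit la proposition \ref{p4.1}; notez seulement que l'action des correspondances de Chow sur les groupes de Chow sup\'erieurs \'etales y est invoqu\'ee \emph{sous r\'eserve de v\'erification} de certaines fonctorialit\'es (\cite[app. A]{cell0}), r\'eserve que votre derni\`ere phrase att\'enue quelque peu.
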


Par exemple, il suffit de trouver $(Y,\gamma)$ avec $\dim Y\le 3$ et $Y\in B(\F)$.

 \subsection{Conjecture de Tate  sur les diviseurs et sur les $1$-cycles}

 Soit $X$
une
$\F$-vari\'et\'e projective lisse de dimension $d$, et soit $l$ un nombre premier. 
Comme on le rappellera au \S \ref{rappelscyclesglobalpositif}, on s'int\'eresse particuli\`erement
au conoyau de l'application cycle
$$CH^{d-1}(X) \otimes \Z_{l} \to H^{2d-2}_{\et}(X,\Z_{l}(d-1)),$$ 
dont on se demande s'il est nul.

Supposons que
la conjecture
\ref{ctate} soit vraie pour $(X,1,l)$. La proposition
\ref{diversesimplications} b) (i) implique alors qu'elle est vraie pour $(X,1,l')$ et
$(X,d-1,l')$ pour tout nombre premier $l'$; de mani\`ere \'equivalente (conjecture
\ref{ctate1}), pour $i=1$ et $i=d-1$  le conoyau de \eqref{eqtate}  est fini. 

Nous allons raffiner
ce r\'esultat. Pour cela, nous avons besoin de la condition $S^i(X,l)$  de la proposition \ref{diversesimplications} a) 1; on y a vu (b) (ii)) qu'elle est vraie pour tout
tout $l$ 
si $X\in B(\F)$.

\begin{lem}\label{lbrf} Soit $i\in [0,d]$. Il existe un entier $N=N(X,i)>0$ ayant la
propri\'et\'e suivante: pour
$l>N$, si la condition $S^i(X,l)$ est vraie, alors l'accouplement
\begin{multline*}
H^{2i}_\et(X,\Z_l(i))\times H^{2(d-i)}_\et(X,\Z_l(d-i))\\
\to H^{2d}_\et(X,\Z_l(d))\to
H^{2d}_\et(\ovX,\Z_l(d))\simeq \Z_l
\end{multline*}
est parfait (en particulier, ses deux termes sont sans torsion).
\end{lem}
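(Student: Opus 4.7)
\emph{Plan de démonstration.} La stratégie consiste à combiner la dualité de Poincaré sur $\ovX$ avec une descente galoisienne via la suite spectrale de Hochschild-Serre, puis à exploiter la condition $S^i(X,l)$ pour établir la perfection de l'accouplement sur $X$. Posons $B = H^{2i}_\et(\ovX, \Z_l(i))$ et $D = H^{2(d-i)}_\et(\ovX, \Z_l(d-i))$. Pour $l$ suffisamment grand, tous les groupes $H^j(\ovX, \Z_l(k))$ qui nous intéressent sont des $\Z_l$-modules libres de type fini (les torsions étant finies et d'exposant borné indépendamment de $l$), et la dualité de Poincaré fournit un accouplement parfait, $G$-équivariant pour l'action triviale de $G$ sur $H^{2d}(\ovX, \Z_l(d)) \simeq \Z_l$, $B \times D \to \Z_l$.

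Ensuite, puisque $\cd(G) = 1$, la suite spectrale de Hochschild-Serre dégénère en $E_2$ et donne, pour tout couple $(j, k)$, une suite exacte courte
\begin{equation*}
0 \to H^{j-1}(\ovX, \Z_l(k))_G \to H^j(X, \Z_l(k)) \to H^j(\ovX, \Z_l(k))^G \to 0.
\end{equation*}
En degrés impairs $j = 2i-1$, $2(d-i)-1$, $2d-1$ (avec les twists adéquats), les conjectures de Weil démontrées par Deligne assurent que les valeurs propres de Frobenius sont de valeur absolue $q^{-1/2} \neq 1$. L'opérateur $\Phi - I$ y est donc inversible rationnellement, et son déterminant est (à une unité $l$-adique près, puisque $l\ne p$) un entier non nul indépendant de $l$ ; pour $l$ assez grand c'est une unité $l$-adique, ce qui entraîne l'annulation des invariants et coinvariants correspondants. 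On obtient ainsi, pour de tels $l$, des isomorphismes $H^{2i}(X, \Z_l(i)) \iso B^G$, $H^{2(d-i)}(X, \Z_l(d-i)) \iso D^G$, $H^{2d}(X, \Z_l(d)) \iso H^{2d}(\ovX, \Z_l(d)) \iso \Z_l$ ; en particulier les deux termes de l'accouplement sont sans torsion, et l'accouplement de l'énoncé s'identifie à la restriction aux invariants $B^G \times D^G \to \Z_l$ de la dualité de Poincaré sur $\ovX$.

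Il reste à établir la perfection de cette restriction sous l'hypothèse $S^i(X,l)$. Cette hypothèse se traduit par la semi-simplicité de Frobenius sur $B \otimes \Q_l$ en la valeur propre $1$ : le polynôme caractéristique $\chi(t) \in \Z[t]$ (à coefficients indépendants de $l$) se factorise $\chi(t) = (t-1)^m \chi_1(t)$ avec $\chi_1(1) \neq 0$, et $\ker(\Phi - I)^m = \ker(\Phi - I) = B^G$. Pour $l$ ne divisant ni $\chi_1(1)$ ni $m!$, l'identité de Bézout dans $\Z_l[t]$ fournit la décomposition $\Z_l[t]/\chi(t) \simeq \Z_l[t]/(t-1)^m \times \Z_l[t]/\chi_1(t)$, d'où une décomposition $\Phi$-équivariante entière $B = B^G \oplus B'$ avec $B' = \ker \chi_1(\Phi)$. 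Par dualité les valeurs propres de Frobenius sur $D$ sont les inverses de celles sur $B$, donc $S^i(X,l) \Leftrightarrow S^{d-i}(X,l)$, et on a de même $D = D^G \oplus D'$. L'équivariance de l'accouplement, jointe à l'inversibilité de $\Phi - I$ sur $B'$ et de $\Phi_D - I$ sur $D'$ (pour $l$ assez grand), force les orthogonalités $B^G \perp D'$ et $B' \perp D^G$. L'accouplement parfait $B \times D \to \Z_l$ se scinde donc en somme directe de deux accouplements parfaits, dont celui cherché sur $B^G \times D^G$.

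\emph{Obstacle principal.} La difficulté essentielle est technique : il faut assembler l'entier $N(X, i)$ à partir des contributions distinctes (annulation de torsion en cohomologie $l$-adique, nullité des invariants et coinvariants en degrés impairs via $P(q^i)$, validité entière de la décomposition $B = B^G \oplus B'$ via $\chi_1(1)$ et $m!$, plus éventuellement $l > p$), chacune ne faisant intervenir qu'un nombre fini de premiers dépendant uniquement de $X$ et $i$, ce qui autorise la formulation uniforme de l'énoncé.
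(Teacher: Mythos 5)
Your proof is correct and follows essentially the same route as the paper's: Deligne's independence of $l$ for the characteristic polynomials, Gabber's torsion-freeness for $l$ grand, vanishing of the odd-degree Hochschild--Serre terms (so that $H^{2i}_\et(X,\Z_l(i))\iso H^{2i}_\et(\ovX,\Z_l(i))^G$), an integral B\'ezout/CRT splitting under $S^i(X,l)$, and Poincar\'e duality modulo torsion on $\ovX$. The only cosmetic difference is that you split \emph{both} $H^{2i}$ and $H^{2(d-i)}$ integrally and block-diagonalize the pairing via the orthogonality relations $B^G\perp D'$, $B'\perp D^G$, whereas the paper splits only $M_l=H^{2i}_\et(\ovX,\Z_l(i))$, deduces that $M_l^G\to (M_l)_G$ is an isomorphism of torsion-free modules, and then invokes the induced perfect duality between coinvariants and invariants.
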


\begin{proof}

On dispose de l'action du Frobenius g\'eom\'etrique $F$  sur chaque  $ \Z_{l} $-modules $H^{i}(\ovX,\Z_{l}(j))$
et sur le $\Q_{l}$-vectoriel correspondant  $H^{i}(\ovX,\Q_{l}(j))$.

D'apr\`es Deligne \cite{deligne}, le polyn\^ome caract\'eristique inverse $\det(1-F t)$  
pour cette action est un polyn\^{o}me \`a coefficients dans $\Q[t]$
 ind\'ependant de $l$,
et qui  pour $i\neq 2j$ ne s'annule pas en $t=1$.

D'apr\`es Gabber \cite{gabber}, pour presque tout
nombre premier $l$,
les $\Z_{l}$-modules  $H^{i}(\ovX,\Z_{l}(j))$ sont sans torsion. 
Il en r\'esulte que  pour presque tout premier $l$, 
pour $i \neq 2j$, $H^{i}(\ovX,\Z_{l}(j))$ est sans torsion et
l'endomorphisme
$F-1$ est un automorphisme de ce module, et donc
  $H^1(G,H^{i}(\ovX,\Z_{l}(j)))=0$.

De la suite spectrale de Hochschild-Serre, on d\'eduit pour tout $i$
des suites exactes courtes
$$0 \to H^1(G,H^{2i-1}_{\et}(\ovX,\Z_{l}(i)))) \to H^{2i}_{\et}(X,\Z_{l}(i)) \to H^{2i}_{\et}(\ovX,\Z_{l}(i)))^G \to 0.$$

On obtient donc que pour presque tout $l$,
on a un isomorphisme de $\Z_{l}$-modules sans torsion
$$H^{2i}_{\et}(X,\Z_{l}(i)) \iso H^{2i}_{\et}(\ovX,\Z_{l}(i)))^G.$$

Soit $P(t)=\det(1-F t)$ le polyn\^{o}me caract\'eristique inverse de $F$
sur $H^{2i}_{\et}(\ovX,\Q_{l}(i))$, qui est comme on a dit dans 
$\Q[t]$ et ind\'ependant de $l$. \'Ecrivons $P(t)=(t-1)^m.Q(t)$ dans $\Q[t]$,
avec $Q(1) \neq 0$. On a une \'egalit\'e de B\'ezout
\begin{equation}\label{eqbezout}
a(t)(t-1)+b(t)Q(t)=1 \in \Q[t].
\end{equation}

Sous l'hypoth\`ese $S^i(X,l)$, l'endomorphisme $(F-1).Q(F)$ s'annule sur  $V_{l}:=H^{2i}_{\et}(\ovX,\Q_{l}(i))$,
et 
$$V_{l} = V_{l}^{F=1} \oplus V_{l}^{Q(F)=0}.$$
Notant $M_{l} = H^{2i}_{\et}(\ovX,\Z_{l}(i))$, utilisant le fait que
\eqref{eqbezout} est \`a coefficients $l$-entiers pour presque tout $l$, on
en conclut que pour presque tout $l$ satisfaisant $S^i(X,l)$, on a une d\'ecomposition
$$M_{l} = M_{l}^{F=1} \oplus M_{l}^{Q(F)=0},$$
avec $M_{l}$ sans torsion, et une \'egalit\'e
$$a(F)(F-1)+b(F)Q(F)=1  \in \Z_{l}[F].$$
Ceci implique que pour presque tout premier $l$, si $S^i(X,l)$ est v\'erifi\'e, l'application compos\'ee
$$M_{l}^G=M_{l}^{F=1} \to M_{l} \to  M_{l}/(F-1)M_{l} = ({M_{l}})_{G}$$
est un isomorphisme de $\Z_{l}$-modules sans torsion.

Il est connu
que pour tout premier $l$ diff\'erent de la caract\'eristique,
 l'accouplement de Poincar\'e
\[H^{2i}_\et(\ovX,\Z_l(i))\times H^{2(d-i)}_\et(\ovX,\Z_l(d-i))\to\Z_l\]
est parfait modulo torsion.  (Voir \cite{zarhin} pour une d\'emonstration r\'ecente.)

Ce qui pr\'ec\`ede implique que pour presque tout  nombre premier $l$ satisfaisant $S(X,i,l)$,
l'accouplement induit 
\[H^{2i}_\et(\ovX,\Z_l(i))_G\times H^{2(d-i)}_\et(\ovX,\Z_l(d-i))^G\to\Z_l\]
est une dualit\'e parfaite de $\Z_{l}$-modules sans torsion, et
qui s'identifie \`a l'accouplement
$$H^{2i}_{\et}(X,\Z_{l}(i))  \times H^{2(d-i)}_{\et}(X,\Z_{l}(d-i))) \to \Z_{l}$$
par les fl\`eches \'evidentes. Pour presque tout $l$ satisfaisant $S(X,i,l)$, l'accouplement
$$H^{2i}_{\et}(X,\Z_{l}(i))  \times H^{2(d-i)}_{\et}(X,\Z_{l}(d-i))) \to \Z_{l}$$
est donc parfait.
\end{proof}

\begin{prop}\label{brauerfini}
Si la Conjecture \ref{ctate} est vraie pour $(X,1)$ 
(pour un nombre premier $l_0$), alors l'application
cycle
\[CH^{d-1}(X)\otimes \Z_l\to H^{2d-2}(X,\Z_l(d-1))\]
a son conoyau fini, et nul pour presque tout $l$.
\\
En particulier, cette conclusion vaut pour tout $X\in B(\F)$.
\end{prop}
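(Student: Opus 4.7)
Je d\'emontrerais d'abord la finitude du conoyau pour tout $l$, puis sa nullit\'e pour presque tout $l$. La conjecture \ref{ctate} pour $(X,1,l_0)$ entra\^\i ne, via la proposition \ref{diversesimplications} b) (i), la conjecture forte \ref{ctateforte} pour $(X,1)$ et $(X,d-1)$. Par la partie a) de la m\^eme proposition, cette derni\`ere implique la conjecture \ref{ctate} pour $(X,d-1,l)$ pour tout premier $l$; l'\'equivalence entre les conjectures \ref{ctate} et \ref{ctate1} fournit alors directement la finitude du conoyau de $c_{d-1}$ pour tout $l$.

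Pour raffiner en ``presque tout $l$'', je poserais $A=H^2_{\et}(X,\Z_l(1))$ et $B=H^{2d-2}_{\et}(X,\Z_l(d-1))$. La conjecture forte \ref{ctateforte} pour $(X,1)$ fournit la condition $S^1(X,l)$ pour tout $l$ (proposition \ref{diversesimplications} a) 1). Le lemme \ref{lbrf} appliqu\'e en $i=1$ garantit l'existence d'un entier $N$ tel que pour $l>N$ l'accouplement de Poincar\'e $A\times B\to\Z_l$ soit parfait, $A$ et $B$ \'etant libres de rang $\rho$ (le nombre de Picard de $X$). Par ailleurs, la remarque \ref{tateviaBrauer} combin\'ee \`a la suite exacte de Kummer livre un isomorphisme $\Pic(X)\otimes\Z_l\iso A$ pour tout $l$, car la finitude de $\Br(X)\{l\}$ entra\^\i ne $T_l\Br(X)=0$. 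Je choisirais donc, globalement, $\rho$ diviseurs $D_1,\ldots,D_\rho$ dont les classes forment une $\Z$-base de $\Pic(X)$ modulo torsion; pour presque tout $l$, leurs images constituent une $\Z_l$-base de $A$.

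L'\'etape d\'elicate, et qui me para\^\i t le point crucial, est un argument matriciel int\'egral. La conjecture forte \ref{ctateforte} pour $(X,d-1)$ assure que $CH^{d-1}(X)/\sim_{\mathrm{num}}$ est libre de rang $\rho$, et la non-d\'eg\'en\'erescence de l'accouplement d'intersection modulo \'equivalence num\'erique est acquise par d\'efinition. Je choisirais, une fois pour toutes et ind\'ependamment de $l$, $\rho$ cycles $W_1,\ldots,W_\rho\in CH^{d-1}(X)$ dont les classes forment une $\Z$-base de $CH^{d-1}(X)/\sim_{\mathrm{num}}$; la matrice d'intersection $M=(\deg(D_i\cdot W_j))_{i,j}\in M_\rho(\Z)$ a alors un d\'eterminant $\Delta\in\Z\setminus\{0\}$ ind\'ependant de $l$. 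Pour $l>N$ ne divisant pas $\Delta$, $M$ est inversible dans $M_\rho(\Z_l)$, et si $(\beta_j)$ d\'esigne la base de $B$ duale de $([D_i])$ sous l'accouplement parfait, la compatibilit\'e de l'accouplement de Poincar\'e avec l'intersection des cycles donne
\[\beta_j=\sum_k (M^{-1})_{jk}\,c_{d-1}([W_k]),\]
d'o\`u $\beta_j\in\mathrm{im}(c_{d-1})$ et la surjectivit\'e de $c_{d-1}$ pour presque tout $l$. La derni\`ere assertion suit alors de la proposition \ref{diversesimplications} e), qui garantit la conjecture \ref{ctate} pour $(X,1,l)$ d\`es que $X\in B(\F)$.
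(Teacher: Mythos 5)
Your proposal is correct, and its first half coincides with the paper's: finiteness of the cokernel for every $l$ via Proposition \ref{diversesimplications} b) (i) and the equivalence of Conjectures \ref{ctate} and \ref{ctate1}, then the condition $S^1(X,l)$ for all $l$ and Lemma \ref{lbrf}, giving the perfect torsion-free pairing $H^2_{\et}(X,\Z_l(1))\times H^{2d-2}_{\et}(X,\Z_l(d-1))\to\Z_l$ for $l>N$, together with a fixed integral discriminant of the numerical intersection pairing. The endgame, however, is genuinely different. The paper works with the image $B^{d-1}(X,\Z_l)$ of the cycle map and the surjection $\phi^{d-1}_l:B^{d-1}(X,\Z_l)\Surj A^{d-1}(X)\otimes\Z_l$, bijective for $l>N$ since its kernel is torsion and $H^{2d-2}_{\et}(X,\Z_l(d-1))$ is torsion-free; it then kills the finite cokernel of $B^{d-1}(X,\Z_l)\hookrightarrow H^{2d-2}_{\et}(X,\Z_l(d-1))$ by a divisibility bootstrap: if $lx=\cl(y)$ with $y\in B^{d-1}(X,\Z_l)$, then $\langle y,y'\rangle$ is divisible by $l$ for every $y'\in B^1(X,\Z_l)$, so for $l$ prime to the discriminant $y$ is divisible by $l$ in $A^{d-1}(X)\otimes\Z_l$, hence in $B^{d-1}(X,\Z_l)$, whence $x\in B^{d-1}(X,\Z_l)$. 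You instead prove surjectivity head-on: from the finiteness of $\Br(X)\{l\}$ for every $l$ (Remark \ref{tateviaBrauer} combined with Proposition \ref{diversesimplications} b) (i)) you get $T_l\Br(X)=0$, hence $\Pic(X)\otimes\Z_l\iso H^2_{\et}(X,\Z_l(1))$, and you transport fixed $\Z$-bases of $\Pic(X)$ modulo torsion and of $A^{d-1}(X)$, inverting the intersection matrix $M$ over $\Z_l$ for $l\nmid\Delta$ so as to exhibit the dual basis of $H^{2d-2}_{\et}(X,\Z_l(d-1))$ inside the image of the cycle map. Your route is slightly more economical: it never uses $\phi^{d-1}_l$ nor the agreement of homological and numerical equivalence up to torsion in codimension $d-1$ (Proposition \ref{diversesimplications} a)); its extra input, the \emph{integral} isomorphism in codimension $1$, is standard but stronger than what the paper consumes there (the paper only needs surjectivity of $B^1(X,\Z_l)$ onto $A^1(X)\otimes\Z_l$). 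Two points should be made explicit in your write-up: the identification $\Pic(X)\otimes\Z_l\iso\lim_n\Pic(X)/l^n$ uses that $\Pic(X)$ is finitely generated, which holds over a finite field because $NS(\ovX)$ is finitely generated and the group of rational points of the Picard variety is finite; and your dual-basis formula needs a transpose, $\beta_j=\sum_k(M^{-1})_{kj}\,c_{d-1}([W_k])$, a harmless slip since $M$ is invertible in $M_\rho(\Z_l)$ either way.
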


\begin{proof} 
La finitude du conoyau de l'application pour chaque premier $l$
est un cas particulier de la proposition \ref{diversesimplications} b  (i).

Notons $B^i(X,\Z_l)$ l'image de l'application cycle \eqref{eqtate}: 
comme l'\'equivalence homologique sur les cycles implique
l'\'equivalence num\'erique,
 on a une surjection \[\phi^i_l:B^i(X,\Z_l)\Surj A^i(X)\otimes \Z_l.\] 

D'apr\`es la proposition \ref{diversesimplications}, la conjecture \ref{ctate} pour $(X,1,l_0)$ implique la conjecture \ref{ctateforte} pour $(X,1)$ et $(X,d-1)$, donc a fortiori:
\begin{itemize}
\item La conjecture \ref{ctate1} est vraie pour $(X,1,l)$ et $(X,d-1,l)$ quel que soit $l$.
\item La condition $S^1(X,l)$ est vraie quel que soit $l$.
\item Le noyau de $\phi^{d-1}_l$ est de torsion quel que soit $l$.
\end{itemize}

Si $l>N(X,1)$ o\`u $N(X,1)$ est l'entier du lemme \ref{lbrf}, $\phi^{d-1}_l$ est bijectif puisque $H^{2d-2}(X,\Z_l(d-1))$ est sans torsion.

Consid\'erons l'accouplement de $\Z$-modules libres de type fini
\[A^1(X)\times A^{d-1}(X)\to \Z.\]

Par d\'efinition de l'\'equivalence num\'erique, il est non d\'eg\'en\'er\'e. Soit $D$ son
discriminant: pour $l>D$, cet accouplement devient parfait apr\`es tensorisation par $\Z_l$.
Si $l>\sup(D,N(X,1))$, l'accouplement du lemme  \ref{lbrf} est \'egalement parfait, et compatible avec l'accouplement
d'intersection sur les groupes de Chow.

Le conoyau de $B^{d-1}(X,\Z_l) \hookrightarrow  H^{2d-2}(X,\Z_l(d-1))$ est un module de torsion.
Montrons qu'il est nul.
Soit $x\in H^{2d-2}(X,\Z_l(d-1))$ tel que $lx=\cl(y)$, pour $y\in B^{d-1}(X,\Z_l)$. Pour
tout $y'\in B^1(X,\Z_l)$, le produit d'intersection $\langle y,y'\rangle$ est divisible par $l$.
Par cons\'equent, $y$ est divisible par $l$ dans $A^{d-1}(X)\otimes \Z_l$, 
 donc
dans $B^{d-1}(X,\Z_l)$. Ceci conclut la d\'emonstration.

La derni\`ere affirmation r\'esulte de la proposition \ref{diversesimplications} b) (ii).
\end{proof}

\begin{cor}\label{c3.22} Si $X$ de dimension $3$ v\'erifie la conjecture de Tate pour les
diviseurs,  le groupe $H^3_\nr(X,\Q_l/\Z_l(2))$ est divisible pour $l >>0$.
\end{cor}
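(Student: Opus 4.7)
The plan is to combine directly two results already established in the paper: Théorème \ref{Hauptsatz} and Proposition \ref{brauerfini}.

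First, I would observe that since $\dim X = 3$, we have the identifications $CH^{d-1}(X) = CH^2(X)$ and $H^{2d-2}_\et(X,\Z_l(d-1)) = H^4_\et(X,\Z_l(2))$, so the cycle map whose cokernel is controlled by Proposition \ref{brauerfini} is precisely the one appearing in the definition of the module $M$ of Théorème \ref{Hauptsatz}.

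Next, I would invoke Proposition \ref{brauerfini}: since $X$ satisfies the Tate conjecture for divisors, the cokernel
\[M = \Coker\bigl(CH^2(X) \otimes \Z_l \to H^4_\et(X,\Z_l(2))\bigr)\]
is finite for every $l$ and vanishes for all but finitely many $l$. In particular, for $l \gg 0$ we have $M = 0$, hence $M_\tors = 0$.

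Finally, applying Théorème \ref{Hauptsatz} gives that the quotient of $H^3_\nr(X,\Q_l/\Z_l(2))$ by its maximal divisible subgroup is isomorphic to $M_\tors$, which vanishes for $l \gg 0$. Consequently $H^3_\nr(X,\Q_l/\Z_l(2))$ coincides with its maximal divisible subgroup for $l \gg 0$, proving the divisibility. There is no real obstacle here; the whole content has been concentrated into Proposition \ref{brauerfini}, whose proof, via the perfectness of the Poincaré pairing at almost all primes (Lemme \ref{lbrf}) and the semisimplicity of Frobenius, is what does the actual work.
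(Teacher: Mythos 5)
Votre démonstration est correcte et suit exactement la même voie que celle du papier, qui se contente d'invoquer la proposition \ref{brauerfini} (nullité du conoyau de l'application cycle $CH^{2}(X)\otimes\Z_l\to H^{4}_{\et}(X,\Z_l(2))$ pour presque tout $l$, sous la conjecture de Tate pour les diviseurs) puis le théorème \ref{Hauptsatz}. Votre remarque finale est également juste : tout le travail est concentré dans la proposition \ref{brauerfini}, via le lemme \ref{lbrf}.
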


\begin{proof} Cela r\'esulte de la proposition \ref{brauerfini} et du
th\'eor\`eme \ref{Hauptsatz}.
\end{proof}

\begin{prop} \label{courbedomine}
Soit  $\F$ un corps fini.
Soit $X$ une $\F$-vari\'et\'e projective, lisse, g\'e\-o\-m\'e\-tri\-que\-ment int\`egre
de dimension $d$. 
S'il existe une extension finie $L/\F$, 
un  domaine universel $\Omega$  contenant $L$, 
une courbe $Y$ projective et lisse sur $L$ et un $L$-morphisme $Y
\to X\times_{\F}{L}$  qui induit une surjection
des groupes de Chow de z\'ero-cycles sur le corps $\Omega$,  alors pour tout $l$ premier
distinct de la caract\'eristique,  le groupe 
\[\Coker \left(CH^{d-1}(X) \otimes \Z_{l} \to H^{2d-2}_{\et}(X,\Z_{l}(d-1))\right)\]
est   fini, et il est nul pour presque tout $l$.
 \end{prop}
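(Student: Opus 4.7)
On imite la strat\'egie de la proposition \ref{surfacedomine}, le r\^ole jou\'e par la nullit\'e de $H^3_\nr(Y,\Q_l/\Z_l(2))$ pour une surface $Y$ \'etant ici tenu par la nullit\'e de $H^{2d-2}_\et(Y,\Z_l(d-1))$ pour une courbe $Y$ et $d \geq 2$ (le cas $d=1$ \'etant tautologique).

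Par un argument de restriction-corestriction, la composition des fl\`eches naturelles $CH^{d-1}(X) \to CH^{d-1}(X_L) \to CH^{d-1}(X)$ et $H^{2d-2}_\et(X,\Z_l(d-1)) \to H^{2d-2}_\et(X_L,\Z_l(d-1)) \to H^{2d-2}_\et(X,\Z_l(d-1))$ est la multiplication par $m=[L:\F]$. Il suffit donc de traiter le cas $L=\F$, quitte \`a adjoindre les diviseurs premiers de $m$ \`a la liste finie de mauvais $l$. L'hypoth\`ese sur les z\'ero-cycles sur $\Omega$ et l'argument de Bloch--Srinivas (comme dans la preuve de \ref{surfacedomine}) fournissent alors un entier $N>0$ et une \'ecriture
\[
N\cdot\Delta_X \;=\; Z_1 + Z_2 \;\in\; CH_d(X \times_{\F} X),
\]
avec $Z_1=(f\times\mathrm{id})_* W$, $W\in CH_d(Y\times_\F X)$, provenant du morphisme $f:Y\to X$, et $Z_2$ support\'e dans $D\times X$ pour un diviseur propre $D\subsetneq X$.

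L'action des correspondances sur $H^{2d-2}_\et(X,\Z_l(d-1))$ transforme cette identit\'e en $N\cdot\mathrm{id}=Z_{1,*}+Z_{2,*}$. La formule de projection donne $Z_{1,*}=W_*\circ f^*$, qui se factorise par $H^{2d-2}_\et(Y,\Z_l(d-1))$; or ce groupe est nul car $\dim Y=1$, d'o\`u $Z_{1,*}=0$. Soit $\tilde D\to D_{\mathrm{red}}$ une r\'esolution des singularit\'es et $g:\tilde D\to X$ le morphisme compos\'e; alors $Z_{2,*}=W'_*\circ g^*$ pour un certain $W'\in CH_{d-1}(\tilde D\times X)$, et donc $Z_{2,*}$ transite par $H^{2d-2}_\et(\tilde D,\Z_l(d-1))$, qui est la cohomologie en degr\'e maximal de la $(d-1)$-vari\'et\'e projective lisse $\tilde D$.

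Le point crucial est que ce dernier groupe est, \`a un sous-groupe fini pr\`es, engendr\'e par des classes de $0$-cycles sur $\tilde D$. En effet, la suite spectrale de Hochschild-Serre et les conjectures de Weil (Deligne) donnent une extension
\[
0 \to H^1(G, H^{2d-3}_\et(\overline{\tilde D},\Z_l(d-1))) \to H^{2d-2}_\et(\tilde D,\Z_l(d-1)) \to H^{2d-2}_\et(\overline{\tilde D},\Z_l(d-1))^G \to 0
\]
dont le terme de gauche est fini et s'annule pour presque tout $l$ (argument de poids), et dont le terme de droite est un $\Z_l$-module libre engendr\'e par les degr\'es des $0$-cycles sur les composantes g\'eom\'etriques. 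Le conoyau de $CH^{d-1}(\tilde D)\otimes \Z_l \to H^{2d-2}_\et(\tilde D,\Z_l(d-1))$ est donc fini, nul pour presque tout $l$. En poussant par $W'_*$, on obtient que l'image de $Z_{2,*}$ est contenue, modulo un sous-groupe fini d'ordre contr\^ol\'e ind\'ependamment de $l$, dans l'image de $CH^{d-1}(X)\otimes \Z_l \to H^{2d-2}_\et(X,\Z_l(d-1))$. En d\'efinitive, le conoyau cherch\'e est annul\'e par un entier $M$ ind\'ependant de $l$: il est donc fini (module de type fini d'exposant fini sur $\Z_l$), et nul d\`es que $l\nmid M\cdot m$. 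L'obstacle principal est le contr\^ole de l'application cycle sur $\tilde D$, mais c'est l\`a une cons\'equence standard des conjectures de Weil puisqu'il s'agit de la cohomologie en degr\'e maximal.
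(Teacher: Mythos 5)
Votre d\'emonstration emprunte une route r\'eellement diff\'erente de celle de l'article. Celui-ci n'applique l'argument de correspondances qu'au groupe de Brauer : comme $\Br(Y)=0$ pour une courbe projective et lisse sur un corps fini, le formalisme des modules de cycles (o\`u $Z_{2,*}=0$ identiquement, sans aucune d\'esingularisation de $D$) montre que $\Br(X)$ est annul\'e par un entier $N>0$, donc que $\Br(X)\{l\}$ est fini, c'est-\`a-dire la conjecture de Tate pour les diviseurs sur $X$ (remarque \ref{tateviaBrauer}) ; la proposition \ref{brauerfini} --- dont la preuve, via le lemme \ref{lbrf}, contient tout le travail d'uniformit\'e en $l$ (Deligne, Gabber, discriminant de l'accouplement num\'erique) --- transporte ensuite cet \'enonc\'e aux cycles de codimension $d-1$. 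Vous faites au contraire agir la d\'ecomposition de la diagonale directement sur $H^{2d-2}_{\et}(X,\Z_l(d-1))$ et traitez $Z_{2,*}$ par la g\'eom\'etrie du diviseur ; votre ingr\'edient final --- la cohomologie en degr\'e g\'eom\'etrique maximal d'une vari\'et\'e projective lisse sur un corps fini est engendr\'ee par des classes de cycles, \`a un groupe fini pr\`es, nul pour presque tout $l$ --- est exact, mais demande, outre l'argument de poids (Deligne), le th\'eor\`eme de Gabber, l'ind\'ependance de $l$ du polyn\^ome caract\'eristique, et l'existence de z\'ero-cycles de degr\'e $1$ sur chaque composante (Lang--Weil) : il faut le dire.

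Deux vrais accrocs, et un point \`a expliciter. (1) La r\'esolution des singularit\'es de $D_{\mathrm{red}}$, de dimension $d-1$ arbitraire, n'est pas disponible en caract\'eristique positive (connue seulement en petite dimension) ; il faut remplacer $\tilde D$ par une alt\'eration de de Jong, ce qui ne multiplie l'entier annulateur que par une constante fixe ind\'ependante de $l$ (de m\^eme pour le rel\`evement de $Z_2$ \`a $\tilde D\times X$, qui n'existe qu'\`a un multiple pr\`es), de sorte que la conclusion subsiste --- mais telle quelle votre r\'edaction invoque un \'enonc\'e non d\'emontr\'e. (2) Pour $d=2$, l'affirmation que $H^{2d-2}_{\et}(Y,\Z_l(d-1))$ est nul ``car $\dim Y=1$'' est fausse : sur un corps fini on a $\cd(Y)=3$ et $H^2_{\et}(Y,\Z_l(1))\neq 0$ ; la nullit\'e ne vaut que pour $d\geq 3$. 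Le cas $d=2$ se r\'epare : $\Br(Y)=0$ et la suite de Kummer montrent que $H^2_{\et}(Y,\Z_l(1))$ est engendr\'e par des classes de diviseurs, donc l'image de $Z_{1,*}$ reste dans celle de l'application cycle. (3) Enfin, vous avez choisi --- correctement, mais sans le signaler --- l'orientation o\`u $Z_2$ est support\'e dans $D\times X$ (facteur source), obtenue en restreignant la diagonale au point g\'en\'erique du premier facteur (ou en transposant la d\'ecomposition \'ecrite dans la preuve de la proposition \ref{surfacedomine}, o\`u le support est dans $X\times U$ c\^ot\'e but) ; avec l'orientation oppos\'ee, $Z_{2,*}$ se factoriserait par $H^{2d-4}_{\et}(\tilde D,\Z_l(d-2))$, dont l'engendrement par des classes de cycles est la conjecture de Tate, ouverte, pour les $1$-cycles sur $\tilde D$, et votre argument s'effondrerait. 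Ce point m\'erite une phrase.
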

\begin{proof}   
Comme le  grou\-pe de Brauer d'une courbe projective et lisse
  sur un corps fini est nul, 
 un argument  de
  correspondances enti\`ere\-ment analogue \`a celui donn\'e
  dans la proposition \ref{surfacedomine}
  montre que le groupe de Brauer $\Br(X)$  
 est annul\'e par un entier $N>0$, donc est, \`a la torsion $p$-primaire pr\`es, fini. 
 D'apr\`es la remarque la remarque \ref{tateviaBrauer}, la conjecture
  \ref{ctate} vaut pour $(X,1)$.
 On conclut alors avec la proposition \ref{brauerfini}.
\end{proof}

 \section{Divisibilit\'e, voire nullit\'e, de $H^3$ non ramifi\'e: exemples} \label{taillefiniH3}

\subsection{Solides sur  la cl\^oture alg\'ebrique d'un corps fini}
 
 \begin{lem}\label{lemmegaloisien}
 Soient $G$ un groupe profini  et $M$ un $\Z_{l}$-module   de type fini
muni d'une action continue de $G$. Soit $M^{(1)} \subset M$ le $\Z_{l}$-module form\'e des
\'el\'ements dont le stabilisateur est un sous-groupe ouvert de $G$. Le quotient $M/M^{(1)}$
est sans torsion.
 \end{lem}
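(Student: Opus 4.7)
The plan is to show directly that if $m\in M$ and $l^n m\in M^{(1)}$ for some $n\ge 0$, then $m\in M^{(1)}$, which is exactly the assertion that $M/M^{(1)}$ has no torsion. So fix such an $m$ and choose an open subgroup $U\subset G$ fixing $l^n m$; I want to produce an open subgroup of $U$ fixing $m$ itself.

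For any $g\in U$, from $g(l^n m)=l^n m$ one deduces $g m-m\in M[l^n]$. The key observation is that $M[l^n]$ is \emph{finite}: indeed $M$ is a finitely generated $\Z_l$-module, so $M[l^n]$ is a finitely generated $\Z_l$-module killed by $l^n$, hence finite. This reduces the problem to a standard continuity argument.

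Consider the map
\[
\phi\colon U\longrightarrow M[l^n],\qquad g\longmapsto gm-m.
\]
Since the $G$-action on $M$ is continuous and $M$ is Hausdorff for its $l$-adic topology, the finite set $M[l^n]$ inherits the discrete topology from $M$; hence $\phi$ is a continuous map from $U$ to a finite discrete space, so it is locally constant. In particular $\phi^{-1}(0)=\{g\in U:gm=m\}$ is an open subgroup of $U$, and therefore an open subgroup of $G$. This means $m$ has open stabilizer in $G$, i.e.\ $m\in M^{(1)}$, which is the desired conclusion.

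I do not foresee a serious obstacle here: the argument is entirely formal once one notices the finiteness of $M[l^n]$, which itself relies only on the hypothesis that $M$ is of finite type over $\Z_l$. One minor point to be careful about is justifying the discreteness of $M[l^n]$ as a subspace of $M$, which follows from $M$ being Hausdorff (any finite subset of a Hausdorff space is discrete). No cocycle machinery or cohomological input is needed.
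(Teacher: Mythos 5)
Your proof is correct, and it takes a genuinely different route from the paper's. The paper argues in two steps: it first reduces to the torsion-free case via the exact sequence $0\to M_\tors\to M^{(1)}\to \bar M^{(1)}\to 0$ (where $\bar M=M/M_\tors$), whose right-hand surjectivity rests on the vanishing of $\colim_U H^1(U,M_\tors)$ over open subgroups $U$ --- a cocycle argument exploiting the finiteness of $M_\tors$ --- giving $M/M^{(1)}\iso \bar M/\bar M^{(1)}$; then, for $M$ torsion-free, it uses noetherianity to write $M^{(1)}=M^U$ for a single sufficiently small open subgroup $U$, after which $lm\in M^U\Rightarrow m\in M^U$ is immediate. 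You instead prove the pointwise statement directly: if $l^n m$ is fixed by the open subgroup $U$, then $gm-m$ lies in the finite set $M[l^n]$ for every $g\in U$, and continuity of the orbit map into the Hausdorff module $M$ makes $g\mapsto gm-m$ locally constant, so the stabilizer of $m$ itself is open. This avoids both the cohomological reduction and the stabilization $M^{(1)}=M^U$, and is more elementary and self-contained; your argument even yields slightly more, namely that $m$ itself has open stabilizer rather than merely being congruent modulo $M_\tors$ to a $U$-invariant element. What the paper's decomposition buys is structural information ($M^{(1)}$ described, modulo torsion, as honest $U$-invariants, and the isomorphism $M/M^{(1)}\iso \bar M/\bar M^{(1)}$), and its $H^1$ mechanism is the one that generalizes when one needs to track the obstruction cocycle. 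The hypotheses you invoke are all legitimate here: the action is $\Z_l$-linear (needed for $g(l^n m)=l^n(gm)$), $M[l^n]\subseteq M_\tors$ is finite since $M$ is of finite type over $\Z_l$, and $M$ is Hausdorff for the $l$-adic topology because $\bigcap_k l^k M=0$ (clear from the structure theorem), which makes the finite subset $M[l^n]$ discrete, as you note.
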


\begin{proof} a) Soit $M_\tors$ le sous-module de torsion de $M$, et soit $$\bar M=M/M_\tors.$$
Comme $M_\tors$ est fini, on a
\[\colim_{U\subset G} H^0(U,M_\tors)=M_\tors,\qquad \colim_{U\subset G} H^1(U,M_\tors)=0\]
o\`u $U$ d\'ecrit les sous-groupes ouverts de $G$. On en d\'eduit une suite exacte
\[0\to M_\tors \to M^{(1)}\to \bar M^{(1)}\to 0.\]

Ainsi, $M/M^{(1)}\iso \bar M/\bar M^{(1)}$ et on peut supposer $M$ sans torsion.

b) Supposons $M$ sans torsion. Comme $M$ est un
$\Z_l$-module noeth\'erien, on a $M^{(1)}=M^U$ pour un sous-groupe ouvert $U\subset G$ assez
petit.  Soit $m\in M$ tel que $lm\in M^{(1)}$, c'est-\`a-dire $(u-1)(lm)=0$ pour tout $u\in U$.
Alors $(u-1)m=0$ pour tout $u\in U$, donc $m\in M^{(1)}$.
\end{proof}

 \begin{prop}\label{dim3algclos} 
Soit $V$ une vari\'et\'e projective, lisse, connexe
de dimension $3$ sur la cl\^oture alg\'ebrique $\overline{\F}$ d'un corps fini $\F$.
Sous la conjecture de Tate pour les classes de diviseurs sur les surfaces
sur  un corps fini,
pour tout nombre premier $l$ diff\'erent de la caract\'eristique
de $\F$,
le groupe $$H^3_\nr(V,\Q_{l}/\Z_{l}(2))$$ est un groupe divisible.
\end{prop}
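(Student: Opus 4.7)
By Theorem \ref{Hauptsatz} applied to $V/\ovF$, the quotient of $H^3_\nr(V,\Q_l/\Z_l(2))$ by its maximal divisible subgroup identifies with the finite torsion subgroup of $M:=\Coker(CH^2(V)\otimes\Z_l\to H^4_\et(V,\Z_l(2)))$; hence it suffices to show $M_{\tors}=0$. I spread $V$ out to a smooth projective model $V_0/\F_q$ with $V=V_0\otimes_{\F_q}\ovF$, and for each finite intermediate extension $\F_q\subset\sigma\subset\ovF$, write $V_\sigma=V_0\otimes_{\F_q}\sigma$, $G_\sigma=\Gal(\ovF/\sigma)$, and $L^2$ for the image of $CH^2(V)\otimes\Z_l$ in $H^4_\et(V,\Z_l(2))$.

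I would first propagate the Tate hypothesis from surfaces to every $V_\sigma$ by choosing a smooth surface hyperplane section $S\subset V_0$ (obtained by Bertini after a Veronese re-embedding, if necessary): weak Lefschetz gives an injection $H^2_\et(V_\sigma,\mu_{l^n})\hookrightarrow H^2_\et(S_\sigma,\mu_{l^n})$, whence via the Kummer sequence $\Br(V_\sigma)\{l\}$ is a subquotient of $\Br(S_\sigma)\{l\}$. Finiteness of $\Br(S_\sigma)\{l\}$ is equivalent to Tate for divisors on the surface $S_\sigma$ (Remark \ref{tateviaBrauer}), so the hypothesis transfers to give Tate for divisors on every $V_\sigma$. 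Proposition \ref{brauerfini} then gives that
\[
M_\sigma:=\Coker\bigl(CH^2(V_\sigma)\otimes\Z_l\to H^4_\et(V_\sigma,\Z_l(2))\bigr)
\]
is finite for every~$\sigma$.

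Since $\Q_l/\Z_l(2)$ is torsion, etale cohomology (and its Zariski sheafification used to define non-ramified cohomology) commutes with the filtered limit $V=\lim_\sigma V_\sigma$, yielding
\[
H^3_\nr(V,\Q_l/\Z_l(2))=\colim_\sigma H^3_\nr(V_\sigma,\Q_l/\Z_l(2)).
\]
Theorem \ref{Hauptsatz} applied to each $V_\sigma$ exhibits $H^3_\nr(V_\sigma,\Q_l/\Z_l(2))$ as an extension of the finite group $M_\sigma$ by a divisible group. A filtered colimit of divisibles is divisible, and an extension of a divisible group by a divisible one is again divisible; the proposition thus reduces to showing that the torsion group $\colim_\sigma M_\sigma$ is divisible (indeed, vanishing).

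The main obstacle is this last step. My strategy would combine: Lemma \ref{lemmegaloisien} applied to the $G$-module $H^4_\et(V,\Z_l(2))$, which forces $M_{\tors}$ into the ``Galois-algebraic'' subquotient $H^4_\et(V,\Z_l(2))^{(1)}/L^2$; the Hochschild--Serre exact sequence
\[
0\to H^1(G_\sigma,H^3_\et(V,\Z_l(2)))\to H^4_\et(V_\sigma,\Z_l(2))\to H^4_\et(V,\Z_l(2))^{G_\sigma}\to 0,
\]
whose kernel is finite by Deligne's theorem; and the finiteness of each $M_\sigma$ from Proposition \ref{brauerfini}. Together these should allow one to show that any class in $M_\sigma$ coming from a continuous-invariant element of $H^4_\et(V,\Z_l(2))$ is killed upon pullback to $V_{\sigma'}$ for $\sigma'\supset\sigma$ sufficiently large, yielding $\colim_\sigma M_\sigma=0$ and hence the divisibility of $H^3_\nr(V,\Q_l/\Z_l(2))$.
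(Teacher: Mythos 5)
Your framework --- Theorem \ref{Hauptsatz} over $\ovF$, spreading out, the colimit description of $H^3_\nr$, and Lemma \ref{lemmegaloisien} to locate the torsion of the cokernel inside $H^4_\et(V,\Z_l(2))^{(1)}/L^2$ --- matches the skeleton of the paper's argument, but your last step is a genuine gap, and it is exactly where the whole difficulty of the proposition sits. Finiteness of every $M_\sigma$ does not yield $\colim_\sigma M_\sigma=0$: the orders of the $M_\sigma$ are not bounded a priori and the transition maps are uncontrolled, so a filtered colimit of such finite groups can be any $l$-primary torsion group. Concretely, the finite group $H^4_\et(V,\Z_l(2))_{\tors}$ is insensitive to $\sigma$; if some torsion class $t$ there failed to lie in $L^2$, it would lift to $H^4_\et(V_\sigma,\Z_l(2))$ for all large $\sigma$ (via the Hochschild--Serre surjection $H^4_\et(V_\sigma,\Z_l(2))\to H^4_\et(V,\Z_l(2))^{G_\sigma}$) and would give a compatible system of nonzero classes in the $M_\sigma$, hence a nonzero class in the colimit --- and nothing in your toolkit (Hochschild--Serre, Deligne's finiteness of $H^1(G_\sigma,H^3)$, Lemma \ref{lemmegaloisien}) excludes such a $t$. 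Such non-algebraic torsion classes actually exist for codimension-$2$ cycles on higher-dimensional varieties over $\ovF$ (the Atiyah--Hirzebruch-type examples of \cite{ctsz}, recalled in \S 5 of the paper), so no weight/limit formalism can rule them out; what rules them out for $1$-cycles is a genuinely geometric theorem, and the paper's proof consists precisely in quoting it. Schoen's theorem \cite{schoen} (see also \cite{ctsz}) asserts that, under the Tate conjecture for divisors on surfaces over finite fields, the image $L^2$ of $CH^2(V)\otimes\Z_l\to H^4_\et(V,\Z_l(2))$ is the whole submodule of elements with open stabilizer, i.e.\ $L^2=H^4_\et(V,\Z_l(2))^{(1)}$; Lemma \ref{lemmegaloisien} then gives $M_\tors=0$ and Theorem \ref{Hauptsatz} concludes. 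Your sentence ``these should allow one to show \dots killed upon pullback'' is thus not an argument but a restatement of (the threefold case of) Schoen's integral Tate theorem for $1$-cycles, which is proved by geometric means (Lefschetz pencils, degeneration to surfaces), not by limit formalism.

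A secondary, repairable point: your transfer of the Tate hypothesis to the $V_\sigma$ is incorrect as stated. Injectivity of $H^2_\et(V_\sigma,\mu_{l^n})\to H^2_\et(S_\sigma,\mu_{l^n})$ does not make $\Br(V_\sigma)\{l\}$ a subquotient of $\Br(S_\sigma)\{l\}$: the induced map on Brauer groups kills every class whose restriction to $S_\sigma$ becomes algebraic, and $\Pic(S_\sigma)$ is in general strictly larger than $\Pic(V_\sigma)$, so finiteness of that kernel is not formal (it amounts to the nontrivial statement that a divisor class on $S_\sigma$ whose cohomology class lifts to $V$ is algebraic on $V$). A correct soft route to what you actually need: for $\alpha\in H^2_\et(V,\Q_l(1))^{G_\sigma}$ one has $\iota_*\iota^*\alpha=\alpha\cup h$, where $h$ is the hyperplane class; Tate for the surface $S_\sigma$ makes $\iota^*\alpha$ a $\Q_l$-combination of divisor classes, so $\alpha\cup h$ lies in the span of classes of $1$-cycles on $V_\sigma$, and hard Lefschetz ($\cup\, h\colon H^2_\et(V,\Q_l(1))\iso H^4_\et(V,\Q_l(2))$) yields the rational Tate conjecture for $1$-cycles on $V_\sigma$ and hence the finiteness of $M_\sigma$ directly. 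But this finite-level finiteness is the easy half; the passage to the colimit still requires Schoen's theorem.
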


\begin{proof}
Il existe un corps fini $\F$ et une $\F$-vari\'et\'e projective, lisse, g\'eom\'etriquement int\`egre
$X$ telle que $V=X \times_{F} \overline{\F}$. Soit $G={\Gal}(\overline{\F}/\F)$.
D'apr\`es un th\'eor\`eme de Schoen (\cite{schoen}, voir aussi \cite{ctsz}), 
pour $V$ de dimension $d$, la conjecture de Tate
pour les diviseurs sur les surfaces  implique que l'image de la classe de cycle
$$CH^{d-1}(V) \otimes \Z_{l} \to  H^{2d-2}_{\et}(V,\Z_{l}(2))$$
 est pr\'ecis\'ement le 
$\Z_{l}$-sous-module des \'el\'ements dont le stabilisateur est ouvert dans $G$.
Il r\'esulte alors du lemme \ref{lemmegaloisien} que le conoyau de l'application
cycle ci-dessus est sans torsion. Pour $d=3$, le th\'eor\`eme \ref{Hauptsatz} implique alors que
le groupe $H^3_\nr(V,\Q_{l}/\Z_{l}(2))$ est divisible.
\end{proof}

\begin{rema}
On notera  que sur le corps des complexes on dispose de vari\'et\'es
$V$ projectives et lisses de dimension 3 avec pour $l$ premier convenable  $H^3_\nr(V,\Q_{l}/\Z_{l}(2))$
non divisible, c'est-\`a-dire (\cite[Thm. 3.7]{ctv}  ou Th\'eor\`eme  \ref{Hauptsatz} ci-dessus)
que   la torsion du groupe 
 $$M = \Coker \left(CH^2(V) \otimes \Z_{l} \to H^4_{\et}(V,\Z_{l}(2))\right)$$
est non nulle. De tels exemples,    avec $M$ fini, ont \'et\'e donn\'es par
Koll\'ar (voir \cite[\S 5.3]{ctv}).
\end{rema}

\subsection{Solides sur un corps fini  fibr\'es en coniques}
 
 \begin{theo}[Parimala et Suresh \protect{\cite{parimalasuresh}}] \label{H3parimalasuresh}
  \footnote{Ce r\'esultat vient d'\^etre \'etendu par A. Pirutka  \cite{pirutka2} aux
fibrations en vari\'et\'es de Severi-Brauer d'indice premier au-dessus d'une surface.}
Soit $f : X \to S$ un morphisme surjectif de vari\'et\'es projectives lisses sur un corps fini
 $\F$ de caract\'e\-ris\-tique $p$ diff\'erente de $2$. Supposons que $S$ soit une surface et que la
fibre g\'en\'erique de $f$
 soit une conique (lisse et g\'eom\'etriquement int\`egre sur $\F(S)$). Alors pour tout nombre
premier $l
\neq p$, on a 
$$H^3_\nr(X,\Q_{l}/\Z_{l}(2))=0.$$

\end{theo}

\begin{rema}
Remarquons que  $H^3_\nr(X,\Q/\Z(2))$ est a priori annul\'e par~$2$, en particulier fini
d'apr\`es le th\'eor\`eme 
\ref{Hauptsatz}. En effet, soient
$K=\F(S)$ et $F=\F(X)$. Alors $F=K(C)$ o\`u $C$ est une conique. Si $c$ est un point de degr\'e
$2$ de
$C$ et $K'=K(c)$, alors $F'=K'F$ est $K'$-isomorphe \`a $K'(t)$. Donc
\[H^3_\nr(K'/\F,\Q/\Z(2))\iso H^3_\nr(F'/\F,\Q/\Z(2))\]
(l'isomorphisme est classique hors de la $p$-torsion; pour celle-ci, cf. \cite[ex.
7.4 (3) et th. 8.6.1]{ctkh}), et le groupe de gauche est nul par la proposition
\ref{ctssgros}. On conclut par l'argument habituel de transfert.
\end{rema}

\begin{rema}
Dans le th\'eor\`eme \ref{H3parimalasuresh}, le cas d'une fibration lisse
 est simple \`a
expliquer:

  En gardant les notations ci-dessus, l'application naturelle 
\[H^3(K,\Q_{l}/\Z_{l}(2)) \to H^3(F,\Q_{l}/\Z_{l}(2))\]
  induit une surjection (valable en g\'en\'eral pour le corps des fonctions d'une conique
\cite{suslin})
 $$ H^3(K,\Q_{l}/\Z_{l}(2)) \to H^3_\nr(F/K,\Q_{l}/\Z_{l}(2)) \to 0.$$

Soit $\beta\in H^3(K,\Q_{l}/\Z_{l}(2))$. Soient $s\in S$ un point de codimension $1$ et
$x=f^{-1}(s)\in X$. Sous l'hypoth\`ese de lissit\'e de $f:X \to S$, $x$ est un point de
codimension $1$ et l'extension $\sO_{S,s}\subset \sO_{X,x}$ est non ramifi\'ee; d'o\`u
\[\partial_x(f^*\beta)=f^*\partial_s(\beta)\in \Br(K(x)).\]

Toujours sous l'hypoth\`ese de lissit\'e, la fibration en coniques
  est associ\'ee \`a une alg\`ebre d'Azumaya $A$ sur $S$ (de dimension 4). D'apr\`es un
th\'eor\`eme classique de Witt, on a
\[\Ker\left(\Br(\F(s))\to \Br(K(x))\right) =\langle A_s\rangle\]
o\`u $A_s$ d\'esigne la fibre de $A$ en $s$. Mais $A_s=0$
  car elle provient d'un \'el\'ement du groupe de Brauer d'une courbe projective lisse
  sur un corps fini (\`a savoir, la normalisation de l'adh\'erence de $x$ dans $X$).

Ainsi, si $\beta$  devient non ramifi\'ee
dans $H^3(F,\Q_{l}/\Z_{l}(2))$, alors $\partial_s(\beta)=0$
pour tout  $s\in S^{(1)}$. Finalement le morphisme $H^3_\nr(K/\F,\Q_{l}/\Z_{l}(2)))\to
H^3_\nr(F/\F,\Q_{l}/\Z_{l}(2)))$ est surjectif, et on conclut par la proposition
\ref{ctssgros}.

\end{rema}

 \section{Questions et conjectures}\label{exemplesquestions}

Soient $\F$ un corps fini, $p =\car(\F)$ et  $\overline{\F}$ une cl\^oture alg\'ebrique.
Soit $l$ premier, $l\neq p$. 
En  grande dimension, une adaptation des exemples d'Atiyah-Hirzebruch 
(voir \cite{ctsz}) donne  des exemples de $V/{\overline \F}$ projectives et lisses 
et de classes dans $H^4_{\et}(V,\Z_{l}(2))_{\tors}$ qui ne sont pas
dans l'image de l'application cycle 
$$CH^2(V) \otimes \Z_{l} \to H^4_{\et}(V,\Z_{l}(2)).$$
On en d\'eduit imm\'ediatement 
des exemples de vari\'et\'es $X/\F$
projectives et lisses 
et de classes dans $H^4_{\et}(X,\Z_{l}(2))_{\tors}$ qui ne sont pas
dans l'image de l'application cycle 
$$CH^2(X) \otimes \Z_{l} \to H^4_{\et}(X,\Z_{l}(2)).$$

En utilisant le th\'eor\`eme \ref{Hauptsatz}, ceci donne
des exemples de $V/{\overline \F}$  pour lesquelles le groupe $H^3_\nr(V,\Q_{l}/\Z_{l}(2)) $ est non divisible,
et donc non nul, 
et des exemples de $X/\F$ avec
$H^3_\nr(X,\Q_{l}/\Z_{l}(2))$ non divisible, et donc non nul.

A. Pirutka \cite{pirutka}  a par ailleurs 
donn\'e des exemples de vari\'et\'es $X/\F$
 projectives et lisses  g\'eom\'etriquement rationnelles
 de dimension~5 
avec  $$H^3_\nr(\ovX,\Q_{l}/\Z_{l}(2))=0$$ et $H^3_\nr(X,\Q_{l}/\Z_{l}(2))$ fini non nul.

Ces exemples et le paragraphe \ref{taillefiniH3}
 laissent les conjectures et questions suivantes ouvertes.   La premi\`ere  conjecture
  r\'esulte   de la combinaison de la conjecture  \ref{ctateforte} de Tate sur les p\^oles de la fonction z\^eta et de la conjecture \ref{cbeil} de Beilinson, nous  la  r\'ep\'etons    pour m\'emoire (cf. \S \ref{state}):

    \begin{conj}\label{questionfinie}  Le groupe $H^3_\nr(X,\Q/\Z(2))$ est fini
pour toute $X/\F$ projective et lisse.
    \end{conj}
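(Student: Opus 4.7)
\textbf{Esquisse d'une approche de la conjecture \ref{questionfinie}.}
Le plan est de d\'eduire la finitude de $H^3_\nr(X,\Q/\Z(2))$, de mani\`ere \emph{conditionnelle}, de la conjonction de la conjecture de Tate~\ref{ctateforte} sur les p\^oles de la fonction z\^eta et de la conjecture de Beilinson~\ref{cbeil} identifiant rationnellement l'\'equivalence rationnelle et l'\'equivalence num\'erique, comme d\'ej\`a \'evoqu\'e au~\S\ref{state}. On se ram\`ene imm\'ediatement au cas o\`u $X$ est g\'eom\'etriquement connexe.

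La d\'emarche se ferait en trois \'etapes. D'abord, par le r\'esultat de Milne rappel\'e dans Propri\'et\'es~\ref{ex2.1}~(vii), la validit\'e de~\ref{ctateforte} pour toute $\F$-vari\'et\'e projective lisse assure que $X$ a sa classe dans $B_\num(\F)$; cet argument repose cruci\-ale\-ment sur les conjectures de Weil et un th\'eor\`eme de Honda. Ensuite, la conjecture de Beilinson~\ref{cbeil} identifie $B_\num(\F)$ \`a $B(\F)$ (puisque $B_\num(\F)$ est d\'efini comme $B(\F)$ en rempla\c{c}ant l'\'equivalence rationnelle par l'\'equivalence num\'erique), d'o\`u $X \in B(\F)$; comme par hypoth\`ese $X$ v\'erifie en outre~\ref{ctateforte} en toute codimension, la d\'efinition~\ref{d3.1} donne $X \in B_\Tate(\F)$. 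Enfin, le th\'eor\`eme~\ref{copiekahn}~(c) livre directement la finitude de $H^3_\nr(X,\Q/\Z(2))$.

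L'obstacle principal est bien entendu l'inaccessibilit\'e actuelle des conjectures~\ref{ctateforte} et~\ref{cbeil}, qui demeurent tr\`es largement ouvertes; l'approche ci-dessus ne produit donc qu'une r\'eduction de la conjecture~\ref{questionfinie} \`a ces deux \'enonc\'es. Inconditionnellement, on ne peut esp\'erer l'\'etablir que pour des classes particuli\`eres de vari\'et\'es, en combinant le th\'eor\`eme~\ref{Hauptsatz} (qui ram\`ene la finitude \`a la nullit\'e du sous-groupe divisible maximal) avec des constructions par correspondances~: vari\'et\'es domin\'ees au sens de $CH_0$ par un objet de $B_\Tate(\F)$ de petite dimension (corollaire~\ref{correspBTate}), par une surface (proposition~\ref{surfacedomine}), ou les fibrations en coniques trait\'ees par Parimala--Suresh (th\'eor\`eme~\ref{H3parimalasuresh}). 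Une preuve v\'eritablement inconditionnelle semble requ\'erir des id\'ees nouvelles d\'epassant le cadre d\'evelopp\'e dans cet article.
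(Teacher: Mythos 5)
Votre r\'eduction conditionnelle --- combiner la conjecture de Tate \ref{ctateforte} et la conjecture de Beilinson \ref{cbeil}, via le r\'esultat de Milne (Propri\'et\'es \ref{ex2.1} (vii)) pour conclure que toute $X/\F$ projective lisse est dans $B_\Tate(\F)$, puis appliquer le th\'eor\`eme \ref{copiekahn} (c) --- est correcte et co\"{\i}ncide exactement avec l'argument que l'article donne au \S\ref{state} \`a l'appui de la conjecture \ref{questionfinie}, laquelle n'y est pas d\'emontr\'ee inconditionnellement. Vous identifiez \'egalement \`a juste titre les m\^emes cas inconditionnels particuliers (proposition \ref{surfacedomine}, corollaire \ref{correspBTate}, th\'eor\`eme \ref{H3parimalasuresh}) que ceux discut\'es dans l'article.
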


 \begin{qn}\label{questiongeom}
  Existe-t-il $V/{\overline \F}$ projective et lisse,   avec  
$H^3_\nr(V,\Q_{l}/\Z_{l}(2))$ infini ?
\end{qn}

Voir  \cite[prop. 5.5]{kahncycle}  pour une liste de conditions \'equivalentes \`a la finitude de ce groupe
et
 \cite[th. 5.6]{kahncycle} pour un exemple non trivial o\`u cette finitude est \'etablie.

   \begin{qn}\label{questiondim3geom} Pour 
 $V/{\overline \F}$ projective et lisse,  de dimension 3, a-t-on
$$H^3_\nr(V,\Q_{l}/\Z_{l}(2))= 0  \hskip2mm  ?  $$ 
\end{qn}

Ceci serait bien s\^ur le cas si l'on avait une r\'eponse affirmative \`a la question suivante.

 \begin{qn}\label{questiondim3fini}   Pour une vari\'et\'e $X/\F$ projective et lisse de dimension~3, a-t-on
$H^3_\nr(X,\Q_{l}/\Z_{l}(2))=0$  ? 
\end{qn}

\begin{rema}
Soit $X/\F$ projective, lisse, g\'eom\'etriquement connexe de dimension $d$.
On peut s'int\'eresser plus g\'en\'eralement aux groupes 
$H^{i+1}_{\nr}(X,\Q/\Z(i))$, $i \geq 0$.

Le quotient $H^{1}_{\nr}(X,\Q/\Z)/H^1(\F,\Q/\Z) $ est  fini (Weil).   La conjecture \ref{ctate} pour $i=1$ est \'equivalente \`a la finitude de $H^{2}_{\nr}(X,\Q/\Z(1))=\Br(X)$ (Tate, cf. \cite{tate}).

Pour $i=0,1$, on a de nombreux exemples  de vari\'et\'es de dimension $d=i+1$
pour lequelles les groupes ci-dessus sont non nuls.  Il semble difficile de construire
un tel exemple avec $i=2$ et $d=3$ : c'est la question 
  \ref{questiondim3fini}. Elle
est li\'ee \`a des probl\`emes d\'elicats sur le groupe de Griffiths des cycles de codimension 2 (cf. \cite[\S 5]{kahncycle}).

\end{rema}

\begin{rema}
C'est un th\'eor\`eme de Merkurjev et Suslin que l'on a un isomorphisme naturel
$$ H^3_\nr(X,\mu_l^{\otimes 2}) =  H^3_\nr(X,\Q_{l}/\Z_{l}(2))[l].$$
Compte tenu de la suite exacte rappel\'ee au d\'ebut du \S \ref{conjbass}, on voit que 
la nullit\'e  de $H^3_\nr(X,\Q_{l}/\Z_{l}(2))$ 
pour $X$ comme dans la question \ref{questiondim3fini}  est un \'enonc\'e tr\`es fort, car \'equivalent
  \`a la conjonction de :

(a) L'application cycle $CH^2(X)/l \to H^4_{\et}(X,\mu_l^{\otimes 2})$ est injective.

(b) La restriction $H^3_{\et}(X,\mu_l^{\otimes 2}) \to H^3_{\et}(\F(X),\mu_l^{\otimes 2})$
est d'image nulle.
\end{rema}

On dit  qu'une vari\'et\'e g\'eom\'etriquement int\`egre de dimension $d$  est
\emph{g\'eom\'e\-tri\-quement unir\'egl\'ee} si  apr\`es extension finie du corps de base elle est
rationnellement domin\'ee par le produit de la droite projective et d'une vari\'et\'e de
dimension $d-1$. 
  Pour une vari\'et\'e complexe $X$ de dimension $3$   unir\'egl\'ee,  on peut gr\^ace \`a
un  th\'eor\`eme de C.~Voisin montrer  $H^3_\nr(X,\Q/\Z(2))=0$ \cite[Thm. 1.2]{ctv}.
Une vari\'et\'e de dimension $3$ fibr\'ee en coniques sur une surface est unir\'egl\'ee.
Si l'on tient compte du 
th\'eor\`eme \ref{H3parimalasuresh} (Parimala et Suresh) pour de telles vari\'et\'es
d\'efinies sur un corps fini,
on est amen\'e \`a proposer la conjecture suivante. 

\begin{conj}\label{conjunireglee3finie}
Pour $X$ une $\F$-vari\'et\'e projective, lisse, de dimension $3$,
g\'eom\'etriquement unir\'egl\'ee,   le groupe fini   (remarque \ref{remsurfdomine2}) 
$H^3_{\nr}(X,\Q_{l}/\Z_{l}(2)) $ est nul.
\end{conj}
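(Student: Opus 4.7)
The approach is to combine finiteness of $H^3_{\nr}(X,\Q_l/\Z_l(2))$, already noted in Remarque~\ref{remsurfdomine2}, with an $l$-divisibility statement: since any finite $l$-divisible group is trivial, this would yield the conjecture.

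First I would spell out the finiteness. Since $X$ is geometrically uniruled, after a finite extension $\F'/\F$ there exist a smooth projective surface $S/\F'$ and a dominant rational map $\P^1\times S\dashrightarrow X_{\F'}$. Remarque~\ref{remsurfdomine2} together with Proposition~\ref{surfacedomine} then gives the finiteness of $H^3_{\nr}(X_{\F'},\Q_l/\Z_l(2))$ and its annihilation by a fixed integer $N$, and a standard restriction--corestriction argument descends this annihilation to $X$ itself.

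Next comes the divisibility step, which is the substantive part. By Th\'eor\`eme~\ref{Hauptsatz}, $l$-divisibility of $H^3_{\nr}(X,\Q_l/\Z_l(2))$ is equivalent to the torsion-freeness of the cokernel $\Coker\!\bigl(CH^2(X)\otimes\Z_l\to H^4_\et(X,\Z_l(2))\bigr)$. Proposition~\ref{dim3algclos} delivers the corresponding geometric statement over $\overline{\F}$, conditionally on the Tate conjecture for divisors on surfaces, and combined with the geometric finiteness from the first step (applied to $\ovX$) yields $H^3_{\nr}(\ovX,\Q_l/\Z_l(2))=0$. It then remains to descend from $\ovX$ to $X$ via Th\'eor\`eme~\ref{hauptsatz}: the geometric vanishing reduces the question to the surjectivity of $CH^2(X)\to CH^2(\ovX)^G$, modulo a controlled contribution from $H^1(G,M)$ with $M=\bigoplus_l H^3(\ovX,\Z_l(2))\{l\}$.

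The main obstacle is precisely this last surjectivity, an arithmetic avatar of the Bloch--Beilinson philosophy for codimension-two cycles. In characteristic zero Voisin's proof (\cite[Thm.~1.2]{ctv}) uses the MRC fibration, integral Hodge theory, and the representability of $CH^2$ for rationally connected threefolds; a positive-characteristic adaptation would rely on the Tate conjecture for surfaces (available for $X\in B(\F)$ by Proposition~\ref{diversesimplications}\,(e)) together with an $l$-adic analogue of Bloch--Srinivas. A cleaner alternative, which I would try first, is to use the MRC fibration to reduce to the conic-bundle case---handled unconditionally by Th\'eor\`eme~\ref{H3parimalasuresh} of Parimala--Suresh---leaving only the case of a fibration over a curve with geometrically rationally connected surface fibres, where Proposition~\ref{courbedomine} combined with Th\'eor\`eme~\ref{Hauptsatz} may suffice.
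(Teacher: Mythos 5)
The statement you are attacking is a \emph{conjecture}: the paper offers no proof of Conjecture \ref{conjunireglee3finie}, only motivation (the unconditional conic-bundle case of Parimala--Suresh, Th\'eor\`eme \ref{H3parimalasuresh}, and Voisin's theorem in characteristic zero), and it explicitly records that even the geometrically rational case is open. Your first step is sound and matches the paper: finiteness of $H^3_{\nr}(X,\Q_l/\Z_l(2))$ via Remarque \ref{remsurfdomine2} and Proposition \ref{surfacedomine}, with restriction--corestriction to descend annihilation from $\F'$ to $\F$, is exactly what the phrase ``le groupe fini'' refers to. But everything after that has genuine gaps. Your divisibility input, Proposition \ref{dim3algclos}, is conditional on the Tate conjecture for divisors on surfaces over finite fields, so that branch cannot yield the conjecture unconditionally; and it only concerns the geometric group over $\overline{\F}$.

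The descent step is where the strategy actually breaks. Even granting $H^3_{\nr}(\ovX,\Q_l/\Z_l(2))=0$, Th\'eor\`eme \ref{hauptsatz} leaves $H^3_{\nr}(X,\Q_l/\Z_l(2))$ caught between a quotient of $H^1(G,M)$ and $\Coker\left(CH^2(X)\to CH^2(\ovX)^G\right)$, and neither term is controlled: Pirutka's examples cited in the paper (geometrically rational smooth projective $5$-folds over $\F$ with $H^3_{\nr}(\ovX,\Q_l/\Z_l(2))=0$ yet $H^3_{\nr}(X,\Q_l/\Z_l(2))$ finite and nonzero, and $\Coker\left(CH^2(X)\to CH^2(\ovX)^G\right)\neq 0$) show that the surjectivity you would need is not a known input but an open statement that genuinely fails in dimension $5$; ``reducing'' to it reduces the conjecture to something at least as hard. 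Your fallback is also insufficient: for a fibration over a curve with rationally connected fibres, Proposition \ref{courbedomine} together with Th\'eor\`eme \ref{Hauptsatz} gives that the cokernel of the cycle map is finite for every $l$ but null only for \emph{almost all} $l$, hence vanishing of $H^3_{\nr}(X,\Q_l/\Z_l(2))$ only for almost all $l$; the torsion at the finitely many remaining primes (notably $l=2$) is exactly what one cannot kill, and the paper isolates this case as the still-open Conjecture \ref{q4.2}. Finally, the MRC reduction over a finite field requires smooth projective models of the fibration defined over $\F$ (a resolution issue in characteristic $p$), and it leaves untouched the case of trivial MRC base, i.e.\ rationally connected threefolds --- precisely the subcase the paper flags as already open.
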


Le cas particulier o\`u  $X$ est g\'eom\'etriquement rationnelle
est d\'ej\`a ouvert.
En voici un autre, qui
 motive en grande partie le pr\'esent article
(voir le    \S \ref{globalpositif} et le \S \ref{global}). 
   
\begin{conj} \label{q4.2} Soit  $f : X \to C$
un morphisme (surjectif \`a fibre g\'en\'erique lisse et g\'eom\'etriquement int\`egre) de
vari\'et\'es projectives, lisses, g\'e\-o\-m\'e\-tri\-que\-ment int\`egres sur un corps fini
 $\F$. On suppose que $C$ est une courbe et que la fibre g\'en\'erique de $f$ une surface
g\'e\-o\-m\'e\-tri\-que\-ment rationnelle. 
 Alors  
 $H^3_\nr(X,\Q_{l}/\Z_{l}(2))=0$.  
\end{conj}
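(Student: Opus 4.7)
L'approche naturelle serait de ramener cette conjecture au r\'esultat de Parimala--Suresh (Th\'eor\`eme \ref{H3parimalasuresh}). Puisque $H^3_\nr(\cdot,\Q_l/\Z_l(2))$ est un invariant $\F$-birationnel des $\F$-vari\'et\'es projectives et lisses, on peut substituer \`a $X$ n'importe quel mod\`ele projectif lisse qui lui est $\F$-birationnel. La fibre g\'en\'erique de $f$ est une $K$-surface projective lisse et g\'eom\'etriquement rationnelle $V$, o\`u $K=\F(C)$.

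La premi\`ere \'etape consisterait \`a invoquer la classification d'Iskovskikh--Manin: la $K$-surface $V$ est $K$-birationnelle soit \`a un mod\`ele fibr\'e en coniques $V'\to D$ au-dessus d'une $K$-courbe $D$ de genre z\'ero, soit \`a une surface de del Pezzo de degr\'e $d$ avec $1\le d\le 9$. Lorsque $V$ admet une structure birationnelle de fibration en coniques, on sp\'ecialise $D$ en un $\F$-mod\`ele relatif $S\to C$ o\`u $S/\F$ est une surface projective lisse (quitte \`a r\'esoudre les singularit\'es), puis on sp\'ecialise et r\'esout $V'\to D$ en un mod\`ele projectif lisse $X'$ de $X$, muni d'un morphisme $X'\to S$ dont la fibre g\'en\'erique sur $\F(S)$ est une conique lisse. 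Le Th\'eor\`eme \ref{H3parimalasuresh} donne alors $H^3_\nr(X,\Q_l/\Z_l(2))=H^3_\nr(X',\Q_l/\Z_l(2))=0$.

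L'obstacle principal est donc le cas o\`u $V$ n'admet pas de structure birationnelle de fibration en coniques sur $K$, c'est-\`a-dire essentiellement celui de certaines surfaces de del Pezzo de petit degr\'e ($d\le 4$). Pour les degr\'es sup\'erieurs ($d\ge 5$), ou en pr\'esence d'un $K$-point rationnel, une telle fibration se mat\'erialise souvent apr\`es \'eclatements convenables, et l'on se ram\`ene au cas pr\'ec\'edent. Pour un nombre premier $l$ fix\'e, on peut esp\'erer exploiter l'action galoisienne sur le groupe de Picard g\'eom\'etrique de $V$ pour produire une extension finie $L/K$ de degr\'e premier \`a $l$ telle que $V_L$ devienne birationnelle \`a un fibr\'e en coniques, puis conclure sur la composante $l$-primaire par un argument standard de restriction--corestriction; les premiers $l$ divisant les degr\'es de toutes les extensions disponibles restent alors probl\'ematiques.

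\`A d\'efaut, il faudrait attaquer directement la suite exacte fondamentale \eqref{eqLK} et \'etablir la nullit\'e du noyau de l'application cycle $CH^2(X)\otimes \Q_l/\Z_l\to H^4_\et(X,\Q_l/\Z_l(2))$, en tirant parti de la trivialit\'e birationnelle de la fibre g\'en\'erique pour contr\^oler $CH^2(X)$ via un argument de correspondances dans l'esprit de la proposition \ref{surfacedomine}; combin\'e \`a l'annulation, sur l'ouvert de lissit\'e de $f$, des applications r\'esidus en direction de la base, ceci permettrait en principe de d\'eduire $H^3_\nr(X,\Q_l/\Z_l(2))=0$. C'est vraisemblablement dans l'articulation entre ce contr\^ole g\'en\'erique et la gestion des fibres singuli\`eres au-dessus des points ferm\'es de $C$ que r\'eside la difficult\'e essentielle.
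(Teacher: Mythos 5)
L'\'enonc\'e \ref{q4.2} est une \emph{conjecture} de l'article : le texte n'en donne aucune d\'emonstration et la pr\'esente explicitement comme ouverte. Votre texte n'est donc pas une preuve \`a comparer \`a celle du papier, mais un programme, dont vous signalez d'ailleurs honn\^etement les lacunes --- et elles sont r\'eelles. Votre premi\`ere \'etape (r\'eduction, par invariance birationnelle et classification d'Iskovskikh--Manin, au cas d'une fibre g\'en\'erique munie d'une structure de fibr\'e en coniques, \'etal\'ee en un mod\`ele $X'\to S\to C$ auquel s'applique le th\'eor\`eme \ref{H3parimalasuresh}) est correcte dans son principe : c'est exactement la configuration du th\'eor\`eme \ref{parasugeneralise} et du corollaire \ref{pasulocglob}, et c'est pr\'ecis\'ement la motivation que donne l'article pour cette conjecture. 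Notez toutefois qu'elle requiert $p\neq 2$ (hypoth\`ese du th\'eor\`eme de Parimala--Suresh), restriction que vous ne mentionnez pas, et qu'il faut v\'erifier avec soin que le mod\`ele \'etal\'e $X'$ est bien $\F$-birationnel \`a $X$.

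Les lacunes que vous identifiez sont exactement ce qui rend l'\'enonc\'e conjectural, et vos pistes pour les combler ne peuvent pas aboutir en l'\'etat. L'argument de restriction--corestriction ne contr\^ole que la torsion premi\`ere \`a $[L:K]$ ; or, pour les surfaces de del Pezzo de degr\'e $\le 4$, les extensions faisant appara\^{\i}tre une structure de fibr\'e en coniques sont typiquement de degr\'e pair (ou divisible par $3$), tandis que la torsion potentiellement non nulle est justement $2$- et $3$-primaire : les premiers inaccessibles sont pr\'ecis\'ement ceux o\`u il y a quelque chose \`a d\'emontrer. Quant au repli final sur la suite exacte \eqref{eqLK} avec un contr\^ole de $CH^2(X)$ par correspondances \`a la mani\`ere de la proposition \ref{surfacedomine}, un tel argument ne fournit au mieux que l'annulation de $H^3_\nr(X,\Q_{l}/\Z_{l}(2))$ par un entier $N>0$, donc sa finitude, jamais sa nullit\'e : les exemples de Pirutka \cite{pirutka}, cit\'es dans l'article, de vari\'et\'es g\'eom\'etriquement rationnelles de dimension $5$ sur un corps fini avec $H^3_\nr$ fini non nul montrent qu'aucun argument de d\'ecomposition de la diagonale insensible \`a la dimension ne peut donner la nullit\'e. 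Il faudrait exploiter de fa\c con essentielle la dimension $3$ et la fibration au-dessus de la courbe, et aucun m\'ecanisme connu ne le permet \`a ce jour.
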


\section{
Descente galoisienne }

 \subsection{Descente galoisienne sur un corps quelconque}
\label{desgal}

L'\'enonc\'e suivant corrige et amplifie la suite ``exacte" (6)
de \cite[p. 397]{kahn}.  Il est utilis\'e pour \'etablir  la proposition \ref{avecbk}.

\begin{prop}\label{p2.1}  Soit $F$ un corps d'exposant caract\'eristique $p$, et soit $A$ une
$F$-alg\`ebre r\'eguli\`ere semi-locale essentiellement de type fini sur $F$,
d'origine g\'eom\'etrique et g\'eom\'etriquement int\`egre. Notons $F_s$ une cl\^oture
s\'eparable de
$F$ et
$A_s=A\otimes_F F_s$ (qui, par hypoth\`ese, est int\`egre). Alors on a un complexe de groupes de
cohomologie
\'etale
\begin{multline*}
H^3(F,\Q/\Z(2))\to \Ker\left(H^3(A,\Q/\Z(2))\to H^3(A_s,\Q/\Z(2))\right)\\
\to H^2(F,K_2(A_s)/K_2(F_s)))\to H^4(F,\Q/\Z(2))\to H^4(A,\Q/\Z(2))
\end{multline*}
qui est exact, sauf peut-\^etre au terme $H^4(F,\Q/\Z(2))$ o\`u son homologie est l'homologie
d'un complexe
\[H^3(A,\Q/\Z(2))\to H^3(A_s,\Q/\Z(2))^G\to H^3(F,K_2(A_s)/K_2(F_s)) \]
o\`u $G=Gal(F_s/F)$.\\
Dans cet \'enonc\'e, $\Q/\Z(2) = \bigoplus_l \Q_l/\Z_l(2)$ o\`u, pour $l\ne p$, $\Q_l/\Z_l(2)$
est un groupe de racines de l'unit\'e tordues, tandis que pour $l=p$ c'est le faisceau
d\'ecal\'e  $\nu_{\infty}(2)[-2]$, cf. \S \ref{outils}.
\end{prop}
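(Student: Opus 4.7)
The strategy is Galois descent on the \'etale motivic complex $\Z(2)$, using Proposition \ref{isosbasdegre} to translate between $K$-theory and \'etale cohomology, and the Kummer/Artin--Schreier triangles \eqref{LSVGLq} to pass from $\Z(2)$ to $\Q/\Z(2)$ coefficients.

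First, I would form the cone $C \in D(\Z[G])$ of the base change morphism
$$R\Gamma_\et(F_s, \Z(2)) \to R\Gamma_\et(A_s, \Z(2))$$
and show that $H^i(C) = 0$ for $i \le 3$ with $i \ne 2$, while $H^2(C) = N := K_2(A_s)/K_2(F_s)$. This uses Proposition \ref{isosbasdegre}: $\bH^{\le 1}_\et(\cdot, \Z(2)) = 0$, $\bH^2_\et(\cdot, \Z(2)) = K_2(\cdot)$, and $\bH^3_\et(\cdot, \Z(2)) = H^1(\cdot, \K_2)$ on regular schemes; the last group vanishes on $F_s$ by Hilbert 90 for $K_2$ (Merkurjev--Suslin) and on the semi-local regular ring $A_s$ by the Gersten conjecture for $\K_2$. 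Injectivity of $K_2(F_s) \hookrightarrow K_2(A_s)$ follows from the geometric integrality of $A$, and the final vanishing $H^3(C) = 0$ comes from the long exact sequence of the triangle together with $\bH^4_\et(F_s, \Z(2)) = H^3(F_s, \Q/\Z(2)) = 0$ (Galois cohomology of a separably closed field).

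Next, I would apply $R\Gamma(G, -)$. Galois descent yields $R\Gamma(G, R\Gamma_\et(F_s, \Z(2))) \simeq R\Gamma_\et(F, \Z(2))$ and similarly for $A$, producing the triangle
$$R\Gamma_\et(F, \Z(2)) \to R\Gamma_\et(A, \Z(2)) \to R\Gamma(G, C) \by{+1}.$$
The hypercohomology spectral sequence $E_2^{p,q} = H^p(G, H^q(C)) \Rightarrow H^{p+q}(R\Gamma(G, C))$ gives $H^n(R\Gamma(G, C)) \simeq H^{n-2}(G, N)$ for $n \le 3$, and a short exact sequence
$$0 \to H^2(G, N) \to H^4(R\Gamma(G, C)) \to \Ker(d_3) \to 0$$
where $d_3 : \bH^4_\et(A_s, \Z(2))^G \to H^3(G, N)$ is the only possibly nontrivial differential leaving $E_2^{0,4}$. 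The Kummer/Artin--Schreier triangle \eqref{LSVGLq} then identifies $\bH^{i+1}_\et(\cdot, \Z(2)) \simeq H^i(\cdot, \Q/\Z(2))$ for $i = 3, 4$ on $F$ and on $A$, using $\bH^3_\et(\cdot, \Z(2)) = 0$ (Hilbert 90/Gersten) and the fact that $\bH^{\ge 3}_\et(\cdot, \Z(2))$ is torsion (since $R\Gamma_\et(F_s, \Z(2))\simeq K_2(F_s)[-2]$ by Step 1, so these groups are computed by $H^p(G, K_2(F_s))$ with $p \ge 1$, hence torsion).

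Third, I would extract the long exact sequence of the descent triangle. The composite $\bH^4_\et(A, \Z(2)) \to H^4(R\Gamma(G, C)) \to \bH^4_\et(A_s, \Z(2))$ coincides with the restriction $H^3(A, \Q/\Z(2)) \to H^3(A_s, \Q/\Z(2))$ (by naturality of the map of triangles), so restricting the connecting map $\delta$ to $K := \Ker(H^3(A, \Q/\Z(2)) \to H^3(A_s, \Q/\Z(2)))$ produces a map $K \to H^2(G, N)$ since the image lies in the subgroup $H^2(G, N) \subset H^4(R\Gamma(G, C))$. A diagram chase in the LES then yields exactness at $K$ and at $H^2(F, N)$. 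The failure of exactness at $H^4(F, \Q/\Z(2))$ is controlled by $\Ker(d_3) \subseteq \bH^4_\et(A_s, \Z(2))^G = H^3(A_s, \Q/\Z(2))^G$: the resulting obstruction complex
\[H^3(A,\Q/\Z(2))\to H^3(A_s,\Q/\Z(2))^G\to H^3(F, N)\]
has its middle homology equal to the failure homology, the last map being the $d_3$-differential. The principal technical obstacle is the careful bookkeeping of the edge morphisms and the $d_3$-differentials across the two relevant spectral sequences; the $p$-primary part goes through uniformly thanks to the de Rham--Witt formulation of \eqref{LSVGLq}.
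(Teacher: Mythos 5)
Your architecture is in fact the same as the paper's: the paper forms the fibre version of your cone --- a complex $\Z(A/F,2)$, ``the'' homotopy fibre of $\Z(2)_F\to Rf_*\Z(2)_A$ in the derived category of $G$-modules --- runs the Hochschild--Serre spectral sequence on it, obtains exactly your short exact sequence $0\to H^2(G,N)\to \bH^5(F,\Z(A/F,2))\to H^3(A_s,\Q/\Z(2))^G\to H^3(G,N)$ (your $d_3$ is the last map, \emph{cf}.\ \eqref{eq2a}), splices it with the long exact sequence \eqref{eq3} coming from the Kummer/Artin--Schreier triangles, and concludes with the same diagram chase (formalized there by the ``Lemma of the 700th'' of Merkurjev--Tignol). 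But your Step 1 contains a genuine error. The vanishing $H^i(C)=0$ for $i\le 1$, equivalently $\bH^{\le 1}_\et(-,\Z(2))=0$, is not available, and it is not what Proposition \ref{isosbasdegre} says (that proposition only treats degrees $2,3,4$). In degree $1$ it is false: $\bH^1_\et(F_s,\Z(2))\simeq CH^2(F_s,3)\simeq K_3^{\ind}(F_s)$, a nonzero divisible group whose torsion is $\bigoplus_{l\ne p}\Q_l/\Z_l(2)$; in degrees $\le 0$ the vanishing is precisely the Beilinson--Soul\'e conjecture, which the paper explicitly refuses to assume. Consequently your identification $R\Gamma_\et(F_s,\Z(2))\simeq K_2(F_s)[-2]$ fails, and with it your stated justification that $\bH^{\ge 3}_\et(-,\Z(2))$ is torsion.

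The repair is exactly what the paper's proof does: one does not prove vanishing but \emph{unique divisibility} in low degrees. From the triangles $\Z(2)\to\Q(2)\to\Q/\Z(2)$ and $\cd(F_s)=0$ one first gets $\H^q(\Q/\Z(A/F,2))=0$ for $q\le 1$, whence $\H^q(\Z(A/F,2))$ is uniquely divisible for $q\le 2$ (i.e.\ $H^q(C)$ for $q\le 1$ in your indexing). Since $H^p(G,-)$ kills uniquely divisible modules for $p>0$, the rows $q\le 1$ of your $E_2$-page vanish for $p>0$, and all your subsequent computations --- $H^n(R\Gamma(G,C))\simeq H^{n-2}(G,N)$ for $n\le 3$, the short exact sequence at $n=4$, and the identifications $\bH^{i+1}_\et(-,\Z(2))\simeq H^i(-,\Q/\Z(2))$ over $F$ and over $A$ for $i=3,4$ --- survive verbatim (the latter are best obtained from $\bH^q(-,\Q(2))=0$ for $q\ge 3$ over a field or a semi-local regular ring, not from your collapsed $R\Gamma_\et(F_s,\Z(2))$). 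A second point you pass over silently: since $\Z(2)$ is only conjecturally bounded below cohomologically, the convergence of the descent spectral sequence for the unbounded complex $C$ is not automatic; the paper secures it via K-injective resolutions in the sense of Spaltenstein together with precisely the vanishing $E_2^{p,q}=0$ for $p>0$, $q\le 2$. With these two corrections your proposal becomes the paper's proof.
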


\begin{proof} On va suivre la technique de \cite{kahn}, en rempla\c cant les complexes
$\Gamma(X,2)$ de Lichtenbaum par les complexes $\Z_X(2)$ de Bloch, consid\'er\'es pour la
topologie \'etale. Cette op\'eration est
\emph{a priori} un peu d\'esagr\'eable, car ces complexes
 ne sont cohomologiquement born\'es
inf\'e\-rieu\-rement  que conjecturalement (conjecture de Beilinson-Soul\'e), et on travaille avec un corps de
dimension cohomologique \'eventuellement infinie. Que ce probl\`eme soit innocent est
expliqu\'e dans
\cite[\S 2.3]{kahncycle}. 

Soit
$f:\Spec A\to
\Spec F$ le morphisme structural et soit
$\Z(A/F,2)$ ``la" fibre homotopique de $\Z(2)_F\to Rf_*\Z(2)_A$, prise dans la
cat\'egorie d\'eriv\'ee des complexes de $G$-modules. On a une suite spectrale de
Hoch\-schild-Serre
\[E_2^{p,q}=H^p(G,\H^q(\Z(A/F,2)))\Rightarrow \bH^{p+q}(F,\Z(A/F,2))\]

Comme $\Z(A/F,2)$ est un complexe a priori non born\'e, cette suite
spectrale s'obtient en rempla\c cant ce complexe de faisceaux par un
complexe quasi-isomorphe $K$-injectif au sens de Spaltenstein, cf. \cite[Th. 4.5 et Rem.
4.6]{spaltenstein}. Elle converge par l'argument rappel\'e dans \cite[\S 2.3]{kahncycle}, ou
par une variante plus concr\`ete de cet argument donn\'ee ci-dessous.

Consid\'erons le diagramme commutatif de triangles exacts
\[\begin{CD}
\Z(A/F,2)@>>> \Z_F(2)@>>> Rf_*\Z_A(2)@>+1>>\\
@VVV @VVV @VVV\\
\Q(A/F,2)@>>> \Q_F(2)@>>> Rf_*\Q_A(2)@>+1>>\\
@VVV @VVV @VVV\\
\Q/\Z(A/F,2)@>>> (\Q/\Z)_F(2)@>>> Rf_*(\Q/\Z)_A(2)@>+1>>\\
@V{+1}VV @V{+1}VV @V{+1}VV
\end{CD}\]

La derni\`ere ligne et la valeur de $(\Q/\Z)_F(2)$, $(\Q/\Z)_A(2)$ \eqref{LSVGL2} montrent que
$\H^q(\Q/\Z(A/F,2))=0$ pour $q\le 1$. On d\'eduit alors de la colonne de gauche que
$\H^q(\Z(A/F,2))$ est uniquement divisible pour $q\le 2$, ce qui donne la premi\`ere ligne de

\[
\H^q(\Z(A/F,2))=
\begin{cases}
\text{uniquement divisible} &\text{pour $q\le 2$}\\
K_2(A_s)/K_2(F_s) &\text{pour $q=3$}\\
0 &\text{pour $q=4$}\\
H^3(A_s,\Q/\Z(2))) &\text{pour $q=5$}
\end{cases}
\]
cf. \cite[Lemma 3.1]{kahn}. La diff\'erence avec le calcul de \cite{kahn} est le cas $q\le 1$,
o\`u  l'on trouve $0$ (ici ce n'est qu'une conjecture). Le reste du calcul se fait avec les
m\^emes raisonnements que dans loc. cit.

En particulier on a $E_2^{p,q}=0$ pour $p>0,q\le 2$, ce qui assure la convergence de la suite
spectrale.

Comme dans
\cite{kahn2}, \eqref{LSVGL2} donne aussi une suite exacte
\begin{multline}\label{eq3}
0\to \bH^4(F,\Z(A/F,2))\to H^3(F,\Q/\Z(2))\to H^3(A,\Q/\Z(2))\\ 
\to \bH^5(F,\Z(A/F,2))\to H^4(F,\Q/\Z(2))\to H^4(A,\Q/\Z(2))\to\dots 
\end{multline}

 Donc $E_2^{p,q}=0$ pour $q\le 1$ et $q=2, p\ne 0$, d'o\`u un isomorphisme
\begin{equation}\label{eq1a}
H^1(G,K_2(A_s)/K_2(F_s))\iso \bH^4(F,\Z(V/F,2))\iso \Ker \eta
\end{equation}
o\`u $\eta$ est l'homomorphisme $H^3(F,\Q/\Z(2))\to H^3(A,\Q/\Z(2))$ (on a utilis\'e
\eqref{eq3}),  et une suite exacte
\begin{multline}\label{eq2a}
0 \to H^2(G,K_2(A_s)/K_2(F_s))\to \bH^5(F,\Z(A/F,2))\\
\to\bH^5(F_{s},\Z(A_{s}/F_{s},2))^G\to H^3(G,K_2(A_s)/K_2(F_s)).
\end{multline}

Disposons \eqref{eq2a} et une partie de \eqref{eq3} en croix: pour faire tenir le diagramme
dans la page, on note $M=K_2(A_s)/K_2(F_s)$, $C=\Z(A/F,2)$ et $\Q/\Z(2)=2$.
\[\xymatrix{
&&H^3(F,2)\ar[d]\\
&& H^3(A,2)\ar[d]\\
0 \ar[r]& H^2(G,M)\ar[r]& \bH^5(F,C)
\ar[r]\ar[d]& H^3(A_s,2)^G\ar[r] & H^3(G,M)\\
&&H^4(F,2)\ar[d]\\
&& H^4(A,2)
}\]

Un argument simple  \cite[Lemma of the 700th, p. 142]{700}  montre  alors que l'homologie du complexe
\[0\to H^2(G,M)\to H^4(F,2)\to H^4(A,2)\]
en $H^2(G,M)$ (resp. en $H^4(F,2)$) est isomorphe \`a l'homologie du complexe
\[H^3(F,2)\to H^3(A,2)\to H^3(A_s,2)^G\to H^3(G,M)\]
en $H^3(A,2)$ (resp. en $H^3(A_s,2)^G$). La conclusion en suit.
\end{proof}

Soit $V$ une  vari\'et\'e lisse g\'eom\'etriquement int\`egre sur  
 $F$. Soit $V^{(1)}$ l'ensemble des points de codimension 1 de $V$.
 Pour tout $x \in V^{(1)}$ le symbole mod\'er\'e d\'efinit une suite
 exacte de modules galoisiens
 \[0 \to \K_{2}(  O_{V_{s},x} ) \to \K_{2}(F_{s}(V)) \to F_{s}(x)^{\times}  \to 0, \]
d'o\`u une suite exacte de groupes de cohomologie
\begin{multline}\label{eq6.1}
H^2(G, K_{2}(O_{V_{s},x} )/K_2(F_s))  \to H^2(G, K_{2}(F_{s}(V))/K_2(F_s))\\
 \to H^2(G,\  F_{s}(x)^{\times}).
\end{multline}

Sur un corps de caract\'eristique z\'ero et de dimension cohomologique 1,
l'\'enonc\'e suivant est \'etabli dans \cite[Prop. 8.4]{ctv}.
Il sert ici, sur un corps global, \`a d\'emontrer
la proposition \ref{troisdef}.

\begin{prop}\label{avecbk}
Soit $V$ une  vari\'et\'e lisse g\'eom\'etriquement int\`egre sur un corps
 $F$.
Notons 
$$ \varepsilon : H^3_{\nr}(V,\Q/\Z(2)) \to H^3_{\nr}( V\times_{F}F_{s},\Q/\Z(2))$$
et
$${\mathcal N}(V) = 
\Ker\Big(H^2(G, K_{2}(F_{s}(V))/K_2(F_s)) \to H^2(G,\bigoplus_{x \in V^{(1)} } F_{s}(x)^{\times})   \Big).$$
Les suites exactes de la proposition \ref{p2.1} se globalisent  en une suite exacte
  $$ H^3(F,\Q/\Z(2)) \to  \Ker(\varepsilon)  \to {\mathcal N}(V) \to H^4(F,\Q/\Z(2)).$$
\end{prop}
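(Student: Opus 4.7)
The plan is to deduce this globalization from Proposition~\ref{p2.1} applied to the function field of $V$, together with the Bloch--Ogus--Gersten description of unramified cohomology.

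First I would apply Proposition~\ref{p2.1} to $A = F(V)$, viewed as the filtered colimit of the rings $\Gamma(U,\sO_V)$ as $U$ runs through non-empty affine opens of $V$. Each such $U$ is smooth, geometrically integral, and essentially of finite type over $F$, so satisfies the hypotheses. Since étale cohomology, Milnor/Quillen $K_2$, and the Galois cohomology of their quotients all commute with filtered colimits, the conclusion of Proposition~\ref{p2.1} passes to the limit, giving an exact sequence
\[H^3(F,\Q/\Z(2)) \to \Ker(\alpha) \to H^2(G, K_2(F_s(V))/K_2(F_s)) \to H^4(F,\Q/\Z(2)),\]
where $\alpha$ is the restriction map $H^3(F(V),\Q/\Z(2))\to H^3(F_s(V),\Q/\Z(2))$.

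Next I would identify $\Ker(\varepsilon)$ as a subgroup of $\Ker(\alpha)$. The Gersten conjecture for étale cohomology (Bloch--Ogus) gives an injection $H^3_\nr(V_s,\Q/\Z(2)) \hookrightarrow H^3(F_s(V),\Q/\Z(2))$, so $\Ker(\varepsilon) = H^3_\nr(V,\Q/\Z(2)) \cap \Ker(\alpha)$. With this identification in hand, exactness at $\Ker(\varepsilon)$ of the desired sequence comes for free from the above exact sequence, once we restrict the middle map to this subgroup.

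The crux is to match the image of $\Ker(\varepsilon)$ in $H^2(G, K_2(F_s(V))/K_2(F_s))$ with $\mathcal{N}(V)$. For each codimension-one point $x\in V^{(1)}$, the tame symbol sequence
\[0\to K_2(\sO_{V_s,x})\to K_2(F_s(V))\to F_s(x)^\times\to 0\]
yields, after quotienting out $K_2(F_s)$, an edge map $H^2(G,K_2(F_s(V))/K_2(F_s))\to H^2(G, F_s(x)^\times)$ compatible, via the Hochschild--Serre machinery underlying Proposition~\ref{p2.1}, with the étale residue $\partial_x:H^3(F(V),\Q/\Z(2))\to H^2(F(x),\Q/\Z(1))$. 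An unramified class therefore maps into $\mathcal{N}(V)$, proving the inclusion of the image in $\mathcal{N}(V)$ and its annihilation in $H^4(F,\Q/\Z(2))$.

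The hard part, and the main obstacle, is the reverse inclusion: given $\xi\in \mathcal{N}(V)$ whose image in $H^4(F,\Q/\Z(2))$ vanishes, the lift $\beta\in\Ker(\alpha)$ supplied by the first paragraph must be modified, modulo the image of $H^3(F,\Q/\Z(2))$ (which preserves $\xi$), so that every residue $\partial_x\beta$ vanishes on the nose, not merely in $H^2(G,F_s(x)^\times)$. The compatibility of the preceding paragraph shows $\partial_x\beta$ lies in the kernel $H^2(F(x),\Q/\Z(1))\to H^2(G,F_s(x)^\times)$, a subgroup controlled by $H^2(F,\Q/\Z(1))$ and Hilbert~90 via a further Hochschild--Serre analysis at each residue field $F(x)$. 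Threading these corrections back into $\beta$ simultaneously across all $x\in V^{(1)}$ will require exactly the cocycle bookkeeping that mimics, in the relative Galois setting over arbitrary $F$, the argument of \cite[Prop.~8.4]{ctv}, where the case $\cd F\le 1$ in characteristic zero made the obstructions automatic.
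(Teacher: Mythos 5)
Your setup is fine (and the colimit argument in your first paragraph is superfluous: $F(V)$ is itself a regular local $F$-algebra, essentially of finite type and geometrically integral, so Proposition \ref{p2.1} applies to it directly), but the proof has a genuine gap at exactly the point you flag: exactness at ${\mathcal N}(V)$ is left as a programme (``will require exactly the cocycle bookkeeping\dots''), not an argument. Worse, the correction strategy you outline cannot work as stated. The only ambiguity in a lift $\beta\in\Ker(\alpha)$ of $\xi\in{\mathcal N}(V)$ is by elements of the image of $H^3(F,\Q/\Z(2))$, and every such class extends to $H^3(\sO_{V,x},\Q/\Z(2))$ for all $x\in V^{(1)}$, hence has vanishing residue everywhere; adding such classes can therefore never kill a nonzero residue of $\beta$. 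If some lift were ramified, no correction of the kind you propose could repair it; what must be shown is that the residues of $\beta$ vanish automatically.

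The missing idea --- which is the entire content of the paper's half-line proof (``mettre en regard les suites exactes de la proposition \ref{p2.1} relatives \`a $F(V)$ et aux $O_{V,x}$'') --- is that Proposition \ref{p2.1} applies not only to $F(V)$ but to each discrete valuation ring $\sO_{V,x}$, $x\in V^{(1)}$, and that its sequences are natural in $A$. Given $\xi\in{\mathcal N}(V)$ killed in $H^4(F,\Q/\Z(2))$: by exactness of \eqref{eq6.1}, $\xi$ lifts to $\xi_x\in H^2(G,K_2(O_{V_s,x})/K_2(F_s))$ for each $x$; by naturality, $\xi_x$ dies in $H^4(F,\Q/\Z(2))$, so exactness of the sequence for $\sO_{V,x}$ at its $H^2$-term lifts it to $\beta_x\in\Ker\left(H^3(\sO_{V,x},\Q/\Z(2))\to H^3(\sO_{V_s,x},\Q/\Z(2))\right)$. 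Any global lift $\beta\in\Ker(\alpha)$ of $\xi$ then differs from the image of $\beta_x$ by a class coming from $H^3(F,\Q/\Z(2))$ (exactness at $\Ker(\alpha)$), hence is unramified at $x$; since $x$ was arbitrary, $\beta\in\Ker(\varepsilon)$, with no simultaneous correction needed. The same comparison read in the other direction also cleans up your forward inclusion: an element of $\Ker(\varepsilon)$ lies in $\Ker\left(H^3(\sO_{V,x},\Q/\Z(2))\to H^3(\sO_{V_s,x},\Q/\Z(2))\right)$ (using the Gersten injectivity of $H^3(\sO_{V_s,x})$ into $H^3(F_s(V))$ over $F_s$), so its image in $H^2(G,K_2(F_s(V))/K_2(F_s))$ factors through $H^2(G,K_2(O_{V_s,x})/K_2(F_s))$ and dies in $H^2(G,F_s(x)^{\times})$ by \eqref{eq6.1}; this replaces the compatibility you assert, but do not prove, between the Hochschild--Serre edge map and the Galois-cohomological residue $\partial_x$.
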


\begin{proof} Il suffit de mettre en regard les suites exactes de la proposition
 \ref{p2.1} relatives \`a $F(V)$ et aux $O_{V,x} $, puis de tenir compte de \eqref{eq6.1}.
\end{proof}

\begin{theo}\label{cdFleq1}
Soit  $F$ un corps parfait  de dimension cohomologique $\leq 1$.
Soient $\bar F$ une cl\^oture
 alg\'ebrique de $F$ et $G={\rm Gal}({\bar F}/F)$.
Soit $V$ une $F$-vari\'{e}t\'{e} projective, lisse, g\'{e}om\'{e}triquement int\`{e}gre.
Notons   $\ovV=V\times_{F}\bar F$.
On a
alors une suite exacte 
\begin{multline*}0 \to \Ker \left(CH^2(V) \to CH^2(\ovV)^G\right)  \to  H^1(G, \bH^3_{\et}(\ovV,
\Z(2))     )
\\
\to\Ker\left( H^3_\nr(V,\Q/\Z(2)) \to H^3_\nr(\ovV,\Q/\Z(2))\right)
\\\to
  \Coker \left(CH^2(V) \to CH^2(\ovV)^G\right) \to 0.
\end{multline*}
\end{theo}

\begin{proof} Consid\'erons le diagramme commutatif 
\[\begin{CD}
0 &\to& CH^2(V) @>>> \bH^4_\et(V,\Z(2)) @>>> H^0(V,\H^3(\Q/\Z(2))) &\to& 0\\
&& @VVV @VVV @VVV\\
0 &\to& CH^2(\ovV)^G @>>> \bH^4_\et(\ovV,\Z(2))^G @>>> H^0(\ovV,\H^3(\Q/\Z(2)))^G
\end{CD}\]
o\`u les lignes exactes proviennent de \eqref{eqLK}. 
Dans la suite spectrale de Hochschild-Serre (\`a laquelle on peut penser comme suite spectrale
d'hypercohomologie du complexe de $G$-modules $\Z(2)_{\ovV}$)
\[E_2^{p,q}=H^p(G,\bH^q_\et(\ovV,\Z(2)))\Rightarrow \bH^{p+q}_\et(V,\Z(2))\]
on a $E_2^{p,q} =0$ pour $p>2$ puisque, par hypoth\`ese, $\cd(G)\le 1$. 
D'apr\`es \cite[Thm. 1.8 et Thm. 2.2]{ctraskind} et  \cite{grossuwa}
(voir aussi \cite[Prop. 4.17 et Rem. 4.18 ]{kahncycle}), sur le corps alg\'ebriquement clos
$\overline{F}$,
les groupes $H^0(\ovV , \K_{2})\simeq \bH^2_{\et}(\ovV,\Z(2))$
 et $H^1(\ovV , \K_{2}) \simeq \bH^3_{\et}(\ovV,\Z(2))$
sont chacun extension d'un groupe de torsion d'exposant fini
par un groupe divisible.
 Il en r\'esulte $E_2^{2,q}=0$ pour $q=2,3$.
 Ainsi la fl\`eche verticale centrale dans ce diagramme est surjective de noyau
$H^1(G,\bH^3_\et(\ovV,\Z(2)))$, 
et le th\'eor\`eme \ref{cdFleq1}
r\'esulte du lemme du serpent.
\end{proof}

\begin{rema}\label{r2.1}
Cet argument donne de plus une suite exacte
\begin{multline*}
H^3_\nr(V,\Q/\Z(2))\to H^3_\nr(\ovV,\Q/\Z(2))^G\\
\to H^1(F,CH^2(\ovV))\to
H^1(F,\bH^4_\et(\ovV,\Z(2))).
\end{multline*}
\end{rema}

\begin{rema}
En  caract\'eristique z\'ero, la proposition \ref{avecbk} (pour $cd(k)\le 1$) et
le th\'eor\`eme \ref{cdFleq1} sont
\'etablis dans \cite[prop. 8.4 et thm. 8.5]{ctv}; la premi\`ere est utilis\'ee pour d\'emontrer le second.  
On aurait pu suivre la m\^eme m\'ethode ici,  mais on n'e\^{u}t obtenu le r\'esultat qu'\`a la $p$-torsion pr\`es. 
\end{rema}

\subsection{Descente galoisienne sur un corps fini}
\label{varfinies}

Pour tirer du th\'{e}or\`{e}me \ref{cdFleq1}
des cons\'{e}quences pratiques, il faut contr\^{o}ler le module galoisien
$ \bH^3_{\et}(\ovV,\Z(2))  \simeq H^1(\ovV , \K_{2}) $. 
Le th\'{e}or\`{e}me suivant regroupe des r\'{e}sultats de Raskind et du premier auteur  \cite[Thm. 2.2]{ctraskind}
pour $l\neq p$ et de 
Gros et Suwa \cite[\S 3]{grossuwa} pour $l=p$.  

Pour $V$ projective et lisse sur un corps $F$
d'exposant caract\'eristique $p$,
les groupes $H^{i}(\ovV,\Z_{l}(j))$ et  $H^{i}(\ovV,\Q_{l}/\Z_{l}(j))$
ci-dessous utilis\'es sont pour $l\neq p$ les groupes de cohomologie \'etale
$H^{i}_{\et}(\ovV,\Z_{l}(j))$
 et  $H^{i}_{\et}(\ovV,\Q_{l}/\Z_{l}(j))$.
Pour $l=p$, ce sont ceux d\'efinis par Gros et Suwa dans \cite{grossuwa}.
 
Rappelons que le groupe $\bigoplus_{l } H^3(\ovV,\Z_{l}(2))\{l\}$ est   fini.
Pour la partie premi\`ere \`a $p$, c'est d\^u   
\`a Gabber  \cite{gabber}.
Pour la partie $p$-primaire, c'est d\^u \`a 
 Illusie et Raynaud \cite[p.~194]{illusieraynaud}).

\begin{theo}[\cite{ctraskind,grossuwa}] \label{CTRask} 
Soient  $F$ un corps parfait d'exposant  caract\'e\-ris\-tique~$p$,
  $\bar F$ une cl\^oture
 alg\'ebrique de $F$ et $G={\rm Gal}({\bar F}/F)$.
Soit $V$ une $F$-vari\'{e}t\'{e} projective, lisse, g\'{e}om\'{e}triquement int\`{e}gre.
Soit $M=M(\ovV)$ le module galoisien  fini
$\bigoplus_{l} H^3(\ovV,\Z_{l}(2))\{l\}$.
\begin{itemize}
\item[(a)] Il existe une suite exacte naturelle
$$0 \to D \to \bH^3_{\et}(\ovV,\Z(2)) \to M \to 0,$$
o\`u le  groupe $D$ est divisible.
\item[(b)] Pour tout premier $l$, il existe un isomorphisme naturel 
$$H^2(\ovV, \Q_{l}/\Z_{l}(2)) \iso \bH^3_{\et}(\ovV,\Z(2))\{l\}.$$
\item[(c)] Pour tout premier $l$, il existe un isomorphisme naturel
de groupes divisibles
$$H^2(\ovV, \Q_{l}/\Z_{l}(2))_\div \iso D\{l\}$$
\end{itemize}
\end{theo}

On a  la proposition  
 (cf. \cite[Prop. 4.1]{grossuwa}) :

\begin{prop} 
Soit  $\F$ un corps fini. Soit $V$ une $\F$-vari\'{e}t\'{e} projective, lisse, g\'{e}om\'{e\-}tri\-quement int\`{e}gre.
Soit $M$ 
le module galoisien fini 
$$ \bigoplus_{l} H^3(\ovV,\Z_{l}(2))\{l\}.$$
On a un isomorphisme de groupes finis :
$$H^1(G,\bH^3_{\et}(\ovV,\Z(2)) ) \iso H^1(G,M).$$
\end{prop}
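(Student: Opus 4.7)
Le plan est d'appliquer le foncteur de cohomologie galoisienne à la suite exacte courte
\[0\to D\to \bH^3_{\et}(\ovV,\Z(2))\to M\to 0\]
fournie par le théorème \ref{CTRask}(a). La suite exacte longue associée
\[H^0(G,M)\to H^1(G,D)\to H^1(G,\bH^3_{\et}(\ovV,\Z(2)))\to H^1(G,M)\to H^2(G,D)\]
montre que l'isomorphisme cherché résultera des deux annulations $H^1(G,D)=0$ et $H^2(G,D)=0$.

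La seconde annulation est immédiate : $G=\Gal(\ovF/\F)\simeq \hat\Z$ a pour dimension cohomologique $1$ sur les modules galoisiens continus discrets, donc $H^i(G,-)=0$ pour tout $i\geq 2$. La première est le cœur de l'affaire. Le groupe $D$ est somme directe de ses parties $l$-primaires $D\{l\}$, puisque $\bH^3_{\et}(\ovV,\Z(2))\simeq H^1(\ovV,\K_2)$ est un groupe de torsion (\cite{ctraskind}, \cite{grossuwa}). D'après le théorème \ref{CTRask}(c), on a $D\{l\}\simeq H^2(\ovV,\Q_l/\Z_l(2))_\div$ ; la suite exacte longue attachée à $0\to \Z_l(2)\to \Q_l(2)\to \Q_l/\Z_l(2)\to 0$ permet alors d'identifier
\[D\{l\}\simeq \bigl(H^2(\ovV,\Z_l(2))/\mathrm{tors}\bigr)\otimes_{\Z_l}\Q_l/\Z_l.\]

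L'ingrédient clé est le théorème de Deligne (conjectures de Weil) : les valeurs propres du Frobenius géométrique $F$ agissant sur le $\Q_l$-espace vectoriel $H^2(\ovV,\Q_l(2))$ sont de valeur absolue $q^{-1}$, en particulier distinctes de $1$. Ainsi $F-1$ est inversible sur $H^2(\ovV,\Q_l(2))$, donc surjectif sur le quotient divisible $D\{l\}$. Comme $H^1(\hat\Z,A)=A/(F-1)A$ pour tout module discret $A$, on obtient $H^1(G,D\{l\})=0$ pour chaque $l$ ; la cohomologie $H^1$ commutant aux sommes directes de $\hat\Z$-modules, il vient $H^1(G,D)=0$.

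L'obstacle principal n'est pas de nature calculatoire mais tient à la récupération des bonnes références : il faut invoquer \cite{ctraskind,grossuwa} pour savoir que $\bH^3_{\et}(\ovV,\Z(2))$ se décompose en parties $l$-primaires, et le théorème \ref{CTRask}(c) pour identifier $D\{l\}$ à un groupe où le Frobenius est contrôlé par les poids de Deligne. Une fois ces points acquis, la suite exacte longue se réduit à l'isomorphisme annoncé ; sa finitude résulte de celle de $M$, puisque la cohomologie de $G$ à valeurs dans un module fini est finie.
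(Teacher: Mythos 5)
Votre plan (suite exacte longue associée à $0\to D\to \bH^3_{\et}(\ovV,\Z(2))\to M\to 0$, puis annulation de $H^1(G,D)$ et $H^2(G,D)$) est bien celui du texte, mais votre démonstration de $H^1(G,D)=0$ présente une lacune réelle au nombre premier $l=p=\car(\F)$. Votre argument clef invoque Deligne pour les valeurs propres du Frobenius sur $H^2(\ovV,\Q_l(2))$, ce qui ne vaut que pour la cohomologie $\ell$-adique avec $l\neq p$. Or le module $M$ de l'énoncé contient la composante $p$-primaire $H^3(\ovV,\Z_p(2))\{p\}$ (finie d'après Illusie--Raynaud), et le théorème \ref{CTRask}~(c) identifie $D\{p\}$ à $H^2(\ovV,\Q_p/\Z_p(2))_\div$, où par définition $H^2(\ovV,\Q_p/\Z_p(2))=H^0(\ovV,\nu_\infty(2))$ est un groupe de cohomologie de Hodge--Witt logarithmique, auquel votre argument de poids ne s'applique pas tel quel. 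Le texte traite ce cas séparément : $H^1(G,D\{l\})$ est un groupe de coinvariants d'un groupe divisible, donc divisible, et sa finitude --- donc sa nullité --- résulte pour $l\neq p$ de la finitude des coinvariants $H^2_{\et}(\ovV,\Z_l(2))_G$ (Deligne), et pour $l=p$ de la finitude des coinvariants de $H^0(\ovV,\nu_\infty(2))$, établie suivant O.~Gabber dans \cite[\S 2.2, formule (34)]{ctss}. Pour $l\neq p$, votre variante directe (\og $F-1$ surjectif sur $D\{l\}$ \fg) est correcte et équivalente à celle du texte, mais il vous manque tout le cas $p$-primaire.

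Second point : vous affirmez que $\bH^3_{\et}(\ovV,\Z(2))\simeq H^1(\ovV,\K_2)$ est de torsion, en citant \cite{ctraskind} et \cite{grossuwa}; or ces références (et le présent texte, qui les utilise dans la preuve du théorème \ref{cdFleq1}) donnent seulement que ce groupe est extension d'un groupe de torsion d'exposant fini par un groupe divisible --- sa torsionnalité sur $\ovF$ est de nature conjecturale (type Parshin/Beilinson) et n'est pas connue en général. Ce point se contourne comme dans le texte : de $0\to D_\tors\to D\to D/D_\tors\to 0$, le quotient $D/D_\tors$ est uniquement divisible, donc de $H^1$ nul, d'où la surjectivité de $H^1(G,D_\tors)\to H^1(G,D)$, et on applique le théorème \ref{CTRask}~(c) à $D_\tors\iso\bigoplus_l H^2(\ovV,\Q_l/\Z_l(2))_\div$. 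Signalons enfin deux imprécisions : les énoncés \og $H^i(\hat\Z,-)=0$ pour $i\geq 2$ \fg{} et \og $H^1(\hat\Z,A)=A/(F-1)A$ \fg{} ne valent que pour les modules discrets de torsion (on a $H^2(\hat\Z,\Z)\simeq\Q/\Z$); vos conclusions subsistent néanmoins, car $D\{l\}$ est de torsion et, $D$ étant divisible, $H^2(G,D)$ est à la fois de torsion et sans torsion, donc nul.
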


\begin{proof}
De la suite exacte du th\'eor\`eme \ref{CTRask}~(a)  on tire la suite exacte
$$  H^1(G,D)  \to H^1(G,\bH^3_{\et}(\ovV,\Z(2))) \to H^1(G,M) \to H^2(G,D),$$
o\`u $D$ est un groupe divisible. Ceci implique d\'ej\`a $H^2(G,D)=0$
et que la fl\`eche $H^1(G,D_{\tors}) \to H^1(G,D)$ est surjective.
D'apr\`es le th\'eor\`eme  \ref{CTRask}~(c), on a un isomorphisme
de modules galoisiens
$$ \bigoplus_{l} H^2(\ovV, \Q_{l}/\Z_{l}(2))_\div \iso D_{\tors}.$$

 Pour tout $l$, on a un isomorphisme 
 $$H^2(\ovV,  \Z_{l}(2))\otimes \Q/\Z \iso H^2(\ovV, \Q_{l}/\Z_{l}(2))_\div$$
 (cf. \cite[p.~ 781]{ctss}), d'o\`u
 \begin{multline*}H^1(G,H^2(\ovV,  \Z_{l}(2)))\otimes \Q/\Z\simeq H^1(G,H^2(\ovV,  \Z_{l}(2))\otimes \Q/\Z)\\
  \iso H^1(G,H^2(\ovV, \Q_{l}/\Z_{l}(2))_\div).
 \end{multline*}
 
 On a 
 $H^1(G,H^2(\ovV, \Q_{l}/\Z_{l}(2))_\div)=0$
 pour tout premier $l$. 
En effet, ce groupe est un groupe de coinvariants
 d'un groupe divisible, donc est divisible. Pour $l\neq p$, sa finitude, et donc sa
nullit\'e, r\'esulte du fait que le groupe de coinvariants
$H^2_{\et}(\ovV,\Z_{l}(2))_{G}$
est fini (Deligne, voir \cite[\S 2.1]{ctss}). 

Par d\'efinition $H^2(\ovV, \Q_{p}/\Z_{p}(2))=H^0(\ovV,\nu_{\infty}(2))$.
La finitude du groupe des coinvariants de ce dernier groupe sous
l'action de $G$ est \'etablie, suivant O. Gabber, dans \cite[\S 2.2, Formule (34)]{ctss}.
  \end{proof}

La combinaison de ce r\'esultat avec le th\'eor\`eme \ref{cdFleq1} donne  :

\begin{theo}\label{hauptsatz}
Soit $\F$ un corps fini.
Soit $\ovF$ une cl\^{o}ture alg\'{e}brique de $\F$, et soit $G$ le groupe de Galois de $\ovF$
sur
$\F$. Soit $V$ une $\F$-vari\'{e}t\'{e} projective, lisse, g\'{e}om\'{e}triquement int\`{e}gre.
Soit $M$ le module galoisien fini
$\bigoplus_{l} H^3(\ovV,\Z_{l}(2))\{l\}$.
On a alors une suite exacte 
\begin{multline*}
0 \to \Ker \left(CH^2(V) \to CH^2(\ovV)\right)  \to  H^1(G,M) \\
\to  \Ker\left(H^3_\nr(V,\Q/\Z(2)) \to H^3_\nr(\ovV,\Q/\Z(2))\right)\\
\to   \Coker\left(CH^2(V) \to CH^2(\ovV)^G\right) \to 0.
\end{multline*}
\end{theo}

  Si l'on remplace le corps $\F$ par un corps de fonctions d'une variable sur le corps des complexes,
on a un \'enonc\'e analogue \cite[Th\'eor\`eme 8.7]{ctv}, mais  il faut alors supposer que le groupe de
cohomologie coh\'erente $H^2(V,O_{V}) $ est nul.

\begin{cor}\label{brauertrivial}
Soit $\F$ un corps fini de caract\'eristique $p$.
Soit $\ovF$ une cl\^{o}ture alg\'{e}brique de $\F$, et $G=\Gal(\ovF/\F)$.
Soit $V$ une $\F$-vari\'{e}t\'{e} projective et lisse, g\'{e}o\-m\'{e\-}tri\-quement
int\`{e}gre.
\begin{itemize}
\item[(a)] Si $\bigoplus_{l} H^3(\ovV,\Z_{l}(2))\{l\}=0$, on a une suite exacte
\begin{multline*}
0 \to CH^2(V) \to CH^2(\ovV)^G \to H^3_\nr(V,\Q/\Z(2)) \to
H^3_\nr(\ovV,\Q/\Z(2))^G\\
\to H^1(\F,CH^2(\ovV))\to
H^1(\F,\bH^4_\et(\ovV,\Gamma(2))).
\end{multline*}
\item[(b)] Si $V$ est une vari\'{e}t\'{e} g\'e\-o\-m\'e\-tri\-que\-ment rationnelle,  on a 
une suite exacte
$$0 \to CH^2(V)[1/p] \to CH^2(\ovV)^G[1/p] \to H^3_\nr(V,\Q/\Z(2))[1/p] \to 0$$
o\`u pour tout groupe ab\'elien $A$ on note $A[1/p]:= A\otimes_\Z\Z[1/p]$ .
\end{itemize}
\end{cor}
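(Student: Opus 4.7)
Mon plan consiste à démontrer (a) par recollement du Théorème \ref{hauptsatz} avec la Remarque \ref{r2.1}, puis à déduire (b) en appliquant (a) au cas géométriquement rationnel, après avoir établi deux annulations géométriques.

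Pour (a) : sous l'hypothèse $M=0$, on a $H^1(G,M)=0$, et la suite exacte à quatre termes du Théorème \ref{hauptsatz} dégénère en l'injection $CH^2(V)\hookrightarrow CH^2(\ovV)^G$ ainsi qu'en un isomorphisme entre son conoyau et
$$K:=\Ker\bigl(H^3_\nr(V,\Q/\Z(2))\to H^3_\nr(\ovV,\Q/\Z(2))\bigr).$$
La Remarque \ref{r2.1} fournit par ailleurs la suite exacte
$$H^3_\nr(V,\Q/\Z(2))\to H^3_\nr(\ovV,\Q/\Z(2))^G\to H^1(\F,CH^2(\ovV))\to H^1(\F,\bH^4_\et(\ovV,\Z(2))).$$
Puisque l'inclusion $H^3_\nr(\ovV,\Q/\Z(2))^G\hookrightarrow H^3_\nr(\ovV,\Q/\Z(2))$ est injective, le noyau de son premier morphisme coïncide avec $K$. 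Le recollement en $H^3_\nr(V,\Q/\Z(2))$ des deux suites donne la suite exacte à six termes annoncée (on identifie $\bH^4_\et(\ovV,\Gamma(2))$ et $\bH^4_\et(\ovV,\Z(2))$ via la proposition \ref{isosbasdegre}).

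Pour (b) : pour $V$ géométriquement rationnelle, je vais vérifier les deux annulations suivantes, puis appliquer (a) composante par composante pour $l\neq p$. \emph{Premièrement}, $H^3_\nr(\ovV,\Q_l/\Z_l(2))=0$ pour $l\neq p$ : le groupe $H^3_\nr$ étant un invariant birationnel des variétés projectives et lisses et $\ovV$ étant birationnelle à $\P^n_{\ovF}$, on a $H^3_\nr(\ovV,\Q_l/\Z_l(2))=H^3_\nr(\P^n_{\ovF},\Q_l/\Z_l(2))=H^3(\ovF,\Q_l/\Z_l(2))=0$ par la formule de fibré projectif pour la cohomologie non ramifiée et l'algébricité close de $\ovF$. \emph{Deuxièmement}, $M(\ovV)\{l\}=H^3_\et(\ovV,\Z_l(2))\{l\}=0$ pour $l\neq p$ : puisque $\ovV$ est rationnelle, on a $CH_0(\ovV_\Omega)=\Z$, et un argument de décomposition de la diagonale à la Bloch--Srinivas (analogue à celui utilisé dans la preuve de la proposition \ref{surfacedomine}) donne une écriture $\Delta_{\ovV}=Z_1+Z_2$ dans $CH_d(\ovV\times\ovV)$ avec $Z_1$ supporté sur $D\times\ovV$, $D$ un diviseur sur $\ovV$, et $Z_2=\ovV\times\{\mathrm{pt}\}$; par action par correspondances sur $H^3_\et(\ovV,\Z_l(2))$, on ramène la torsion de ce groupe à celle de la cohomologie d'une variété de dimension strictement inférieure, et on conclut par récurrence sur $\dim\ovV$.

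Ces deux annulations permettent d'appliquer (a) à coefficients premiers à $p$, et la suite à six termes obtenue se termine par $H^3_\nr(\ovV,\Q/\Z(2))^G[1/p]=0$, d'où la suite courte
$$0\to CH^2(V)[1/p]\to CH^2(\ovV)^G[1/p]\to H^3_\nr(V,\Q/\Z(2))[1/p]\to 0$$
cherchée. L'obstacle principal est la seconde annulation, à savoir $H^3_\et(\ovV,\Z_l(2))\{l\}=0$ pour $\ovV$ géométriquement rationnelle et $l\neq p$ : faute de résolution des singularités en caractéristique positive, on ne peut pas directement déduire ce résultat de la formule de l'éclatement appliquée à une chaîne birationnelle reliant $\ovV$ à $\P^n$, et il faut un argument de décomposition de la diagonale soigneusement itéré pour conclure. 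Le reste de la preuve est essentiellement formel : recollement de suites exactes pour (a), et dégénérescence directe de la suite à six termes pour (b).
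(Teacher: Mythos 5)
Votre partie (a) est correcte et emprunte exactement la voie du texte : sous l'hypoth\`ese $M=0$, le th\'eor\`eme \ref{hauptsatz} donne l'injectivit\'e de $CH^2(V)\to CH^2(\ovV)^G$ et identifie son conoyau au noyau $K$ de $H^3_\nr(V,\Q/\Z(2))\to H^3_\nr(\ovV,\Q/\Z(2))$, lequel co\"{\i}ncide bien avec le noyau de la fl\`eche vers $H^3_\nr(\ovV,\Q/\Z(2))^G$; le recollement avec la remarque \ref{r2.1} fournit la suite \`a six termes. Seule r\'eserve de d\'etail : l'identification $\bH^4_\et(\ovV,\Gamma(2))\simeq\bH^4_\et(\ovV,\Z(2))$ ne rel\`eve pas de la proposition \ref{isosbasdegre}, qui s'arr\^ete en degr\'e $3$, mais de la comparaison $\Gamma(2)\simeq\Z_\et(2)$ dans la cat\'egorie d\'eriv\'ee.

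En revanche, votre d\'emonstration de (b) comporte une lacune r\'eelle, pr\'ecis\'ement au point que vous signalez vous-m\^eme comme l'obstacle principal : l'annulation de $H^3_\et(\ovV,\Z_{l}(2))\{l\}$ pour $\ovV$ rationnelle et $l\neq p$. La d\'ecomposition enti\`ere de la diagonale $\Delta_{\ovV}=Z_1+Z_2$ fait agir l'identit\'e de $H^3_\et(\ovV,\Z_l(2))$ \`a travers $H^3_\et(\tilde D,\Z_l(2))$, o\`u $\tilde D$ d\'esingularise (en caract\'eristique positive : alt\`ere, ce qui requiert d\'ej\`a les alt\'erations de Gabber de degr\'e premier \`a $l$, que vous ne mentionnez pas) le support $D$ de $Z_1$; on en tire seulement que la torsion de $H^3_\et(\ovV,\Z_l(2))$ est facteur direct de celle de $H^3_\et(\tilde D,\Z_l(2))$. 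Mais $\tilde D$ n'est pas rationnelle en g\'en\'eral : l'hypoth\`ese de r\'ecurrence ne s'applique pas \`a elle, et la torsion de $H^3$ d'une vari\'et\'e projective lisse quelconque de dimension inf\'erieure n'est pas nulle (une surface d'Enriques a de la $2$-torsion dans $H^3$). Votre r\'ecurrence sur la dimension ne peut donc pas conclure. Le texte contourne enti\`erement cette difficult\'e en deux lignes : $\Br(\ovV)\{l\}$, invariant birationnel des vari\'et\'es projectives et lisses, est nul pour $\ovV$ rationnelle; d'apr\`es Grothendieck \cite[(8.9)]{BrIII}, la suite de Kummer identifie le quotient de $\Br(\ovV)\{l\}$ par son sous-groupe divisible maximal au groupe fini $H^3_\et(\ovV,\Z_l(1))\{l\}$, qui est donc nul, et les twists \'etant (non canoniquement) isomorphes sur $\ovF$, on obtient $H^3_\et(\ovV,\Z_l(2))\{l\}=0$. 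Une fois cette annulation acquise, la conclusion de (b) sort directement du th\'eor\`eme \ref{hauptsatz} apr\`es inversion de $p$ (avec $H^1(G,M)[1/p]=0$ et $H^3_\nr(\ovV,\Q/\Z(2))[1/p]=0$), sans qu'il soit n\'ecessaire d'invoquer (a) \og composante par composante \fg, invocation d'ailleurs d\'elicate puisque $CH^2$ n'est pas de torsion.
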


\begin{proof}
  L'\'{e}nonc\'{e} (a) est une cons\'{e}quence imm\'{e}diate
du th\'eo\-r\`eme~\ref{hauptsatz} et 
de la remarque \ref{r2.1}.
Soit $l$ premier, $l\neq p$.
Si  $\ovV$ est rationnelle,   les invariants birationnels
 $H^3_\nr(\ovV,\Q_{l}/\Z_{l}(2))$ et $\Br(\ovV)\{l\}$ sont nuls.
 Comme montr\'e par Grothendieck \cite[(8.9)]{BrIII},
 il r\'esulte de la suite de Kummer que
le quotient de $\Br(\ovV)\{l\}$ par son sous-groupe divisible maximal
 est le groupe fini  $H^3_{\et}(\ovV,\Z_{l}(1))\{l\}$. Pour $\ovV$ rationnelle,
 on a donc  $H^3_{\et}(\ovV,\Z_{l}(1))\{l\}=0$ et donc $H^3_{\et}(\ovV,\Z_{l}(2))\{l\}=0$.
L'\'enonc\'e b) r\'esulte alors du  th\'eo\-r\`eme~\ref{hauptsatz}.
  \end{proof}

\begin{rema}
La question de la surjectivit\'e de la fl\`eche  $ CH^2(V) \to CH^2(\ovV)^G$
sur un corps de base fini
avait \'et\'e pos\'ee par
T.~Geisser. A. Pirutka \cite{pirutka}
vient d'exhiber des exemples de vari\'et\'es projectives et lisses sur un corps fini,
 g\'e\-o\-m\'e\-tri\-que\-ment rationnelles,  de dimension 5,  pour
lesquelles les   groupes 
 $H^3_\nr(V,\Q/\Z(2)) $ et  $\Coker\left(CH^2(V) \to CH^2(\ovV)^G\right) $
 sont non nuls.
 Le corollaire  \ref{brauertrivial} (b)  montre directement que, pour les vari\'et\'es 
g\'e\-o\-m\'e\-tri\-que\-ment rationnelles,  ces deux groupes sont isomorphes  
 \`a la $p$-torsion pr\`es.  
\end{rema}

\bigskip

Soit $V/\F$ une surface projective et lisse, g\'e\-o\-m\'e\-tri\-que\-ment int\`egre.
Comme rappel\'e dans la proposition \ref{ctssgros}, on a
  $H^3_\nr(\ovV,\Q/\Z(2))=0$ et  
 $H^3_\nr(V,\Q/\Z(2))=0$.
Le th\'eor\`eme \ref{hauptsatz} donne donc une suite exacte
$$ 0 \to H^1(G,M) \to CH^2(V) \to CH^2(\ovV)^G \to 0.$$

Comme une telle surface poss\`ede un z\'ero-cycle de degr\'e 1, 
cela donne aussi
une suite exacte
$$ 0 \to H^1(G,M) \to A_{0}(V) \to A_{0}(\ovV)^G \to 0,$$
  o\`u $A_{0}(V) \subset CH_{0}(V)$ est le groupe des z\'ero-cycles de degr\'e z\'ero
modulo \'equivalence rationnelle. 

Le th\'eor\`eme de Ro\v\i tman donne un isomorphisme 
$$A_{0}(\ovV) \iso Alb_{X}(\ovF)$$
entre le groupe de Chow des z\'ero-cycles de degr\'e z\'ero modulo l'\'equivalence
rationnelle et le groupe des points g\'eom\'etriques de la vari\'et\'e d'Albanese.
Par ailleurs on v\'erifie que le module galoisien $M$
est  isomorphe
 au dual  de Cartier $D$ de la torsion du groupe de N\'eron-Severi $NS$ de $\ovV$.
On obtient alors une suite exacte $$ 0 \to H^1(\F, D)) \to A_{0}(V) \to Alb_{X}(\F) \to 0. $$

On retrouve ainsi, dans le cas des surfaces, un th\'eor\`eme de K.~Kato et S.~Saito (\cite[Prop. 9.1]{KS},
voir aussi \cite{ctraskind2}).

 \section{Z\'ero-cycles   sur les corps globaux de caract\'eristique positive}\label{globalpositif}
 
 \subsection{Conjectures sur les z\'ero-cycles : rappels}\label{rappelscyclesglobalpositif}
 
  Soit $\F$ un corps fini, $C/\F$ une courbe projective, lisse, g\'eom\'etriquement connexe,
de corps des fonctions $K=\F(C)$. Pour $v$ point ferm\'e de $C$, notons 
$K_{v}$ le compl\'et\'e de $K$ en $v$, c'est-\`a-dire le corps des fractions
de l'anneau local compl\'et\'e $\hat{O}_{C,v}$.

Soit $X/\F$ une vari\'et\'e projective, lisse, g\'eom\'e\-tri\-quement connexe de dimension $d+1$, \'equip\'ee
d'un morphisme dominant $p : X \to C$ de fibre g\'en\'erique $V/K$ lisse et g\'eom\'e\-tri\-quement int\`egre.
Soit $l$ premier diff\'erent de la caract\'eristique de $\F$. Notons $V_{v}=V\times_{K}K_{v}$.

Comme expliqu\'e dans \cite[\S 1 et \S 2]{ctseattle},  on a un diagramme commutatif de complexes :

\[\begin{CD}
 H^{2d}_{\et}(V,\mu_{l^n}^{{\otimes} d}) @>>> \displaystyle \prodr_{v \in
C^{(1)} } H^{2d}_{\et}(V_{v},\mu_{l^n}^{{\otimes} d} ) @>>> \Hom(H^{2}_{\et}(V,\mu_{l^n}),   
\Q_{l}/\Z_{l})\\ 
@AA{cl}A @AA{cl}A @AAA\\ 
CH_{0}(V) @>>>\displaystyle \prod_{v \in C^{(1)} }
CH_{0}(V_{v})@>>>  \Hom(\Br(V)[l^n], \Q_{l}/\Z_{l}).
\end{CD}\]

Dans ce diagramme, le  terme m\'edian  du complexe sup\'erieur est un produit direct restreint :
on ne prend que les familles  $\{\xi_{v}\}$ telles que pour presque tout $v$, $\xi_{v}$ est dans
l'image de 
$H^{2d}_{\et}(X\times_{C}Spec (\hat{O}_{C,v}), \mu_{l^n}^{{\otimes} d} )$. C'est une suite
exacte (S.~Saito
\cite{saitodual}).
Les fl\`eches $\rm cl$ sont les applications cycle en cohomologie \'etale. La fl\`eche
$\prod_{v \in C^{(1)} } CH_{0}(V_{v}) \to  \Hom(\Br(V)[l^n], \Q_{l}/\Z_{l})$ est induite par
les accouplement naturels
$$ CH_{0}(V_{v}) \times \Br(V_{v}) \to \Br(K_{v}) \iso \Q/\Z$$
qui a un z\'ero-cycle $z_{v}$ et un \'el\'ement $A \in \Br(V_{v})$ associe $\inv_{v}(A(z_{v})) \in \Q/\Z$.

\`A la suite de divers travaux (\cite{cts81}, \cite{katosaito}, \cite{saito}),
le premier auteur a formul\'e  la conjecture suivante \cite[Conjecture 2.2]{ctseattle}.

\begin{conj} \label{congenzerocycletotale}  
Pour chaque $v \in C^{(1)}$, soit $z_{v} \in CH_{0}(V_{v})$.
Supposons que pour tout \'el\'ement  
$A \in \Br(V)\{l\}$, on ait $\sum_{v} \inv_{v}(A(z_{v}))=0$. 
Alors pour tout $n>0$ il existe   $z_{n} \in CH_{0}(V)$ 
tel que pour tout $v$ on ait $cl(z_{n})=cl(z_{v}) \in H^{2d}_{\et}(V_{v},\mu_{l^n}^{{\otimes} d} ).$
\end{conj}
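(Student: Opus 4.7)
My plan is to prove the conjecture under the hypotheses of Theorem \ref{corollairebrauermaninF(C)}: namely, the existence of a smooth projective model $p:X\to C$ with $\dim X = d+1 = 3$, the Tate conjecture for divisors on $X$, and $H^3_\nr(X,\Q_l/\Z_l(2)) = 0$. The overall strategy is a diagram chase in the commutative diagram displayed just before the conjecture, combined with the strong integral cycle-class surjectivity on $X$ forced by the two hypotheses.

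First I would exploit the right square and the Brauer--Manin hypothesis to show that the family $\{cl(z_v)\}$ lies in the kernel of the map to $\Hom(H^2_\et(V,\mu_{l^n}),\Q_l/\Z_l)$. The Kummer sequence presents $\Br(V)[l^n]$ as a quotient of $H^2_\et(V,\mu_{l^n})$ modulo $\Pic(V)/l^n$; the Tate conjecture for divisors on $X$ implies the finiteness of $\Br(X)\{l\}$, which allows one to pass from the vanishing of $\sum_v \inv_v(A(z_v))$ on all of $\Br(V)\{l\}$ to vanishing of the full pairing with $H^2_\et(V,\mu_{l^n})$. Saito's duality (\cite{saitodual}) --- which asserts the exactness of the top row --- then furnishes a class $\xi\in H^{2d}_\et(V,\mu_{l^n}^{\otimes d})$ whose localization at each $v$ equals $cl(z_v)$.

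The main obstacle is to realize $\xi$ as the cycle class of an actual zero-cycle on $V$. I would do this by working on $X$. Proposition \ref{brauerfini} together with the Tate hypothesis forces the cokernel of $CH^2(X)\otimes\Z_l\to H^4_\et(X,\Z_l(2))$ to be finite; Theorem \ref{Hauptsatz} combined with $H^3_\nr(X,\Q_l/\Z_l(2))=0$ then forces this cokernel to be torsion-free; a finite torsion-free group is zero, so the integral cycle-class map is surjective. The restricted-product structure in the middle column of the diagram gives precisely the unramifiedness needed to lift $\xi$ through the localization sequence for the open immersion $V\hookrightarrow X$ (whose obstruction group $\bigoplus_v H^3(X_v,\mu_{l^n})$ is computed via Gysin purity on smooth closed fibers, which can be assumed after shrinking or altering): the local obstructions vanish place by place, producing a global lift $\tilde\xi\in H^4_\et(X,\mu_{l^n}^{\otimes 2})$. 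Modulo a controllable Bockstein contribution from $H^5_\et(X,\Z_l(2))[l^n]$, the integral surjectivity then yields $Z\in CH^2(X)$ with $cl(Z)=\tilde\xi$ up to a class supported on the closed fibers.

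Finally I would take $z_n$ to be the restriction $Z|_V\in CH_0(V)$: the equality $cl(z_n)=cl(z_v)$ in each $H^{2d}_\et(V_v,\mu_{l^n}^{\otimes d})$ will follow by compatibility of the cycle-class map with base change along $\Spec K_v\to C$, together with the fact that any closed-fiber correction to $\tilde\xi$ restricts trivially to every $V_v$. The delicate point --- and the true technical core of the argument --- is the careful management of the Bockstein and the choice of integral lift in the third step: one must ensure that the surjectivity of $CH^2(X)\otimes\Z_l\twoheadrightarrow H^4_\et(X,\Z_l(2))$ actually descends, at the finite level $\mu_{l^n}^{\otimes 2}$, to a surjection hitting the specific $\tilde\xi$ dictated by the $\{z_v\}$. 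The Tate hypothesis and the vanishing of $H^3_\nr$ do most of the heavy lifting here, but the interplay with the fibration $X\to C$ and with the restricted-product condition is what makes the argument global.
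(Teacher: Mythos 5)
The statement you were asked to prove is a \emph{conjecture} (taken from \cite{ctseattle}); the paper does not prove it, and only establishes it under supplementary hypotheses. Your announced plan --- to prove it under the hypotheses of Theorem \ref{corollairebrauermaninF(C)} --- is therefore the right target, and the first half of your argument coincides exactly with the paper's proof of that theorem: Proposition \ref{brauerfini} (Tate for divisors on the threefold $X$) gives a finite cokernel for $CH^2(X)\otimes\Z_l\to H^4_\et(X,\Z_l(2))$; Theorem \ref{Hauptsatz} identifies the torsion of this cokernel with the quotient of $H^3_\nr(X,\Q_l/\Z_l(2))$ by its maximal divisible subgroup; your vanishing hypothesis (the paper only needs divisibility, a slightly weaker hypothesis) then kills the torsion of a finite group, hence the whole cokernel, and the integral cycle map is surjective. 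Up to this point your argument and the paper's are the same.

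Where you diverge is that the paper then stops: it invokes Theorem \ref{brauermaninF(C)}, i.e. \cite[Prop.\ 3.2]{ctseattle} (extending \cite[Cor.\ (8-6)]{saito}), as a black box --- surjectivity of the cycle class map implies Conjectures \ref{congenzerocycletotale} and \ref{congenzerocycle}. You attempt to reprove that implication, and this is where your sketch has genuine gaps. First, your purity/localization step fails as stated: the class $\xi$ furnished by Saito's duality lives in $H^{2d}(V,\mu_{l^n}^{\otimes d})=\colim_U H^{2d}(p^{-1}(U),\mu_{l^n}^{\otimes d})$, and the obstruction to extending it to all of $X$ is concentrated precisely at the closed fibers over $C\setminus U$ --- the fibers you cannot shrink away, and which may be singular, so smooth Gysin purity is unavailable there; ``altering'' replaces $X$ and destroys the hypotheses (Tate, $H^3_\nr=0$) you already spent on $X$. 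Handling these bad fibers, via the restricted-product condition and the components $cl(z_v)$ at exactly those places, is the substantive content of the cited results, not a removable technicality. Second, the Bockstein obstruction $H^5_\et(X,\Z_l(2))[l^n]$ to descending integral surjectivity to the finite level $\mu_{l^n}^{\otimes 2}$ is named but never controlled; calling it ``controllable'' is not an argument, and you yourself flag it as the technical core. (A minor point: your appeal to finiteness of $\Br(X)\{l\}$ to pass from orthogonality to $\Br(V)\{l\}$ to orthogonality to $H^2_\et(V,\mu_{l^n})$ is unnecessary --- it follows from the Kummer surjection $H^2_\et(V,\mu_{l^n})\surj\Br(V)[l^n]$ and the commutativity of the right-hand square alone.) In sum: correct and identical to the paper through the surjectivity of $CH^2(X)\otimes\Z_l\to H^4_\et(X,\Z_l(2))$; beyond that, what you offer is an incomplete sketch of a theorem the paper deliberately cites rather than proves, with the two gaps above left open.
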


Elle a comme cas particulier l'\'enonc\'e suivant (voir \cite[Statement (${\rm M}_{l}^*)$, p.~400]{saito}) :

\begin{conj} \label{congenzerocycle} 
S'il existe une famille $\{z_{v}\}_{v \in C^{(1)}}$, de z\'ero-cycles locaux de
degr\'es premiers \`a $l$  telle que pour tout \'el\'ement  
$A \in \Br(V)\{l\}$ on ait $\sum_{v }    \inv_{v}(A(z_{v}))=0$ alors il existe un z\'ero-cycle de
degr\'e premier \`a $l$ sur~$V$.
\end{conj}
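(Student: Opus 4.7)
L'\'enonc\'e est une conjecture ouverte; ce que je propose ici est une preuve sous les hypoth\`eses additionnelles du th\'eor\`eme cit\'e en introduction (Th\'eor\`eme \ref{corollairebrauermaninF(C)}), \`a savoir que $V$ est la fibre g\'en\'erique d'un solide projectif et lisse $p:X\to C$ sur $\F$, que la conjecture de Tate vaut pour les diviseurs sur $X$, et que $H^3_\nr(X,\Q_l/\Z_l(2))=0$. Le plan est d'utiliser $X$ comme mod\`ele global de la situation arithm\'etique sur $K=\F(C)$ et de transf\'erer l'information depuis la cohomologie $l$-adique de $X$ vers les z\'ero-cycles de $V$.

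Tout d'abord, j'exploiterais le diagramme commutatif rappel\'e au \S \ref{rappelscyclesglobalpositif}: la ligne sup\'erieure est exacte d'apr\`es le th\'eor\`eme de dualit\'e de S.~Saito, tandis que la ligne inf\'erieure n'est qu'un complexe. \'Etant donn\'ee une famille $\{z_v\}$ de z\'ero-cycles locaux de degr\'es premiers \`a $l$ orthogonale \`a $\Br(V)\{l\}$, son image par les fl\`eches verticales de classe de cycle tombe dans le noyau de la fl\`eche de droite de la ligne sup\'erieure, donc se rel\`eve, pour chaque $n$, en une classe $\alpha_n\in H^{2d}_\et(V,\mu_{l^n}^{\otimes d})$, o\`u $d=\dim V=2$. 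Le but est alors de montrer que $\alpha_n$ provient d'un cycle global $z\in CH_0(V)$ dont les classes locales co\"{\i}ncident avec $\cl(z_v)$ modulo $l^n$.

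Ensuite je relierais $H^4_\et(V,\mu_{l^n}^{\otimes 2})$ \`a $\bH^4_\et(X,\Z(2))$ via les suites de Gysin/localisation attach\'ees \`a la fibration $p:X\to C$. L'hypoth\`ese $H^3_\nr(X,\Q_l/\Z_l(2))=0$ conjugu\'ee \`a la suite exacte fondamentale \eqref{eqLK} donne un isomorphisme $CH^2(X)\otimes\Z_l \iso \bH^4_\et(X,\Z(2))\otimes \Z_l$. La conjecture de Tate pour $(X,1,l)$, via la proposition \ref{brauerfini}, fournit de plus la surjectivit\'e modulo torsion de la classe de cycle en codimension $d_X-1=2$ dans $H^4_\et(X,\Z_l(2))$. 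La combinaison de ces deux faits permet de relever $\alpha_n$ en un cycle de codimension $2$ sur $X$; la restriction \`a un ouvert dense de la fibre g\'en\'erique fournit le candidat $z\in CH_0(V)$.

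L'\'etape la plus d\'elicate sera de v\'erifier que le $z$ ainsi obtenu co\"{\i}ncide effectivement avec la famille prescrite $\{z_v\}$ dans $H^{2d}_\et(V_v,\mu_{l^n}^{\otimes d})$ pour chaque point ferm\'e $v\in C$. Cette compatibilit\'e demande une analyse via la descente galoisienne pour $X$ sous $G=\Gal(\ovF/\F)$, et devrait se ramener, par la suite exacte du Th\'eor\`eme \ref{hauptsatz}, au calcul de l'obstruction dans $H^1(G,M)$. L'orthogonalit\'e de $\{z_v\}$ \`a $\Br(V)\{l\}$ devrait pr\'ecis\'ement annuler cette obstruction, mais le d\'etail des identifications --- entre l'accouplement de Brauer--Manin sur $V$ et les classes dans $\bH^4_\et(X,\Z(2))$ --- est pr\'ecis\'ement l'endroit o\`u la \emph{nullit\'e} (et non la simple finitude) de $H^3_\nr(X,\Q_l/\Z_l(2))$ devient indispensable.
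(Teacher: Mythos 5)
Votre cadrage initial est correct : l'\'enonc\'e est une conjecture, que l'article ne d\'emontre que sous les hypoth\`eses du th\'eor\`eme \ref{corollairebrauermaninF(C)}, et c'est bien \`a cette d\'emonstration conditionnelle qu'il faut comparer votre tentative. Or celle-ci diverge du texte en un point essentiel et y laisse un trou v\'eritable. La preuve de l'article tient en deux temps : d'abord, l'hypoth\`ese de Tate pour les diviseurs donne, via la proposition \ref{brauerfini}, la finitude du conoyau de $CH^2(X)\otimes\Z_l \to H^4_\et(X,\Z_l(2))$ ; ce conoyau, \'etant fini donc de torsion, s'identifie par le th\'eor\`eme \ref{Hauptsatz} au quotient de $H^3_\nr(X,\Q_l/\Z_l(2))$ par son sous-groupe divisible maximal, lequel est nul d\`es que ce groupe est \emph{divisible} --- notez que la divisibilit\'e suffit, contrairement \`a votre affirmation finale selon laquelle la nullit\'e serait indispensable (l'hypoth\`ese (ii) du th\'eor\`eme \ref{corollairebrauermaninF(C)} n'exige que la divisibilit\'e). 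L'application cycle est donc surjective, et l'on invoque alors le th\'eor\`eme \ref{brauermaninF(C)} (\cite{ctseattle}, Prop. 3.2, \'etendant S.~Saito) comme bo\^{\i}te noire : c'est lui qui contient \emph{tout} le passage de cette surjectivit\'e aux conjectures \ref{congenzerocycletotale} et \ref{congenzerocycle}. Vous tentez au contraire de red\'emontrer ce dernier \'enonc\'e \`a la main, et c'est l\`a que l'argument \'echoue.

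Deux points pr\'ecis. D'une part, la ``surjectivit\'e modulo torsion'' que vous tirez de la proposition \ref{brauerfini} ne permet pas de relever une classe modulo $l^n$ : un conoyau fini peut avoir de la $l$-torsion, et la surjectivit\'e de $CH^2(X)/l^n \to H^4_\et(X,\Z_l(2))/l^n$ peut alors \'echouer pour tout $n$ ; il vous faut la surjectivit\'e int\'egrale, que seul fournit l'argument via le th\'eor\`eme \ref{Hauptsatz} rappel\'e ci-dessus, et que votre texte poss\`ede en germe ($H^3_\nr(X,\Q_l/\Z_l(2))=0$ plus conoyau fini) mais n'assemble jamais explicitement. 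D'autre part, l'\'etape que vous reconnaissez vous-m\^eme comme ``la plus d\'elicate'' --- v\'erifier que le z\'ero-cycle global construit redonne les classes locales $\cl(z_v)$ dans $H^4_\et(V_v,\mu_{l^n}^{\otimes 2})$ pour tout $v$ --- est pr\'ecis\'ement le contenu de \cite{ctseattle} et des travaux de S.~Saito, et l'outil que vous proposez pour la traiter n'est pas le bon : le th\'eor\`eme \ref{hauptsatz} est un \'enonc\'e de descente galoisienne pour des vari\'et\'es sur le corps fini $\F$, et le groupe $H^1(G,M)$ avec $G=\Gal(\ovF/\F)$ n'a aucun rapport apparent avec l'obstruction place par place sur les compl\'et\'es $K_v$ de $K=\F(C)$ ; cette compatibilit\'e locale repose sur l'exactitude de la suite de dualit\'e globale de Saito (le produit restreint du \S~\ref{rappelscyclesglobalpositif}) combin\'ee \`a la surjectivit\'e de l'application cycle sur $X$, exactement ce que le th\'eor\`eme \ref{brauermaninF(C)} encapsule. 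En l'\'etat, votre proposition esquisse donc une red\'emonstration incompl\`ete d'un r\'esultat que l'article cite, et la partie manquante --- signal\'ee chez vous par les ``devrait'' --- est le c\oe ur m\^eme de l'argument.
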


Pour plus de d\'etail sur ces conjectures, que l'on peut formuler plus g\'en\'era\-lement pour
 toute vari\'et\'e projective et lisse sur un corps global $K$,
en particulier sur un corps de nombres, on consultera 
l'introduction de  \cite{wittenberg}.

 \medskip

Le th\'eor\`eme suivant \cite[Prop. 3.2]{ctseattle}   \'etend un th\'eor\`eme de Shuji Saito
\cite[Cor. (8-6)]{saito}.

\begin{theo}[\cite{ctseattle}]   \label{brauermaninF(C)}
Soit $\F$ un corps fini, $C/\F$ une courbe projective, lisse, g\'eom\'etriquement connexe,
de corps des fonctions $K=\F(C)$.
Soit $X/\F$ une vari\'et\'e projective, lisse, g\'eom\'e\-tri\-quement connexe de dimension $d$, \'equip\'ee
d'un morphisme dominant $p : X \to C$ de fibre g\'en\'erique $V/K$ lisse et g\'eom\'e\-tri\-quement int\`egre.
Soit $l$ premier diff\'erent de la caract\'eristique de $\F$.
Si l'application cycle 
$$CH^{d-1}(X) \otimes \Z_{l} \to H^{2d-2}_{\et}(X,\Z_{l}(d-1))$$ 
est surjective, les  conjectures \ref{congenzerocycletotale} et \ref{congenzerocycle} valent
pour $V$.
\end{theo}

On ne conna\^{i}t pas de contre-exemple \`a l'hypoth\`ese de surjectivit\'e
dans le th\'eor\`eme \ref{brauermaninF(C)}. 
Sur la cl\^{o}ture alg\'ebrique d'un corps fini,  voir la d\'emonstration
de la proposition \ref{dim3algclos}.

Sur un corps fini, 
 du th\'eor\`eme de Lefschetz faible
 on d\'eduit ais\'ement que la surjectivit\'e de
 $$CH^{2}(X) \otimes \Z_{l} \to H^{4}_{\et}(X,\Z_{l}(d-1))$$
 pour toute vari\'et\'e projective et lisse $X$ de dimension 3
 impliquerait la surjectivit\'e de
$$CH^{d-1}(X) \otimes \Z_{l} \to H^{2d-2}_{\et}(X,\Z_{l}(d-1))$$ 
pour toute vari\'et\'e projective et lisse $X$ de dimension $d>3$.

\subsection{Le cas des solides fibr\'es en coniques}
  
Commen\c cons par un \'enonc\'e g\'en\'eral sur les solides.

\begin{theo}\label{corollairebrauermaninF(C)}
Soit $\F$ un corps fini, $C/\F$ une courbe projective, lisse, g\'eo\-m\'e\-tri\-quement connexe,
de corps des fonctions $K=\F(C)$.
Soit $X/\F$ une vari\'et\'e projective, lisse, g\'eom\'etriquement connexe de dimension~$3$, \'equip\'ee
d'un morphisme dominant $p : X \to C$ de fibre g\'en\'erique une surface $V/K$ lisse et g\'eom\'etriquement int\`egre. Soit $l$ premier diff\'erent de la caract\'eristique de $\F$.
 
  On suppose que : 
  \begin{thlist}
\item La conjecture de Tate \cite{tate} vaut pour les diviseurs sur $X$, c'est-\`a-dire  
  que l'application cycle
$$ CH^1(X) \otimes \Q_{l} \to H^2_{\et}(X,\Q_{l}(1))$$
est surjective. 

 \item  Le groupe $H^3_\nr(X,\Q_{l}/\Z_{l}(2))$ est divisible.
 \end{thlist}
 
Alors  les conjectures \ref{congenzerocycletotale} et \ref{congenzerocycle} valent pour $V$.
\end{theo}

\begin{proof}
D'apr\`es la proposition \ref{brauerfini}, comme la vari\'et\'e $X$ est de dimension~3,
 l'application cycle
$$ CH^{2}(X) \otimes \Z_{l} \to H^{4}_{\et}(X,\Z_{l}(2)) $$
a son conoyau fini.
 D'apr\`es le
th\'eor\`eme 
\ref{Hauptsatz}, la torsion de ce conoyau, donc le conoyau lui-m\^eme,
est le quotient de
 de $H^3_\nr(X,\Q_{l}/\Z_{l}(2))$ par son sous-groupe divisible maximal,
et par l'hypoth\`ese (ii)  ce quotient est nul. L'application cycle ci-dessus 
est donc surjective, et
on peut alors appliquer le th\'eor\`eme
\ref{brauermaninF(C)}.
\end{proof}

Le lemme suivant rassemble des r\'esultats bien connus.

\begin{lem}\label{coniques}
(i) Soit $K$ un corps et $C$ une $K$-conique lisse. L'application naturelle
$\Br(K) \to \Br(C)$ est surjective, et son noyau est d'ordre au plus 2.

(ii) Soit $A$ un anneau de valuation discr\`ete de corps des fractions $K$ et de corps r\'esiduel $\kappa$
de caract\'eristique diff\'erente de $2$.Toute conique lisse $C/K$ admet un mod\`ele r\'egulier
$Y \subset \P^2_{A}$ donn\'e par une \'equation homog\`ene $x^2-ay^2-bt^2=0$
avec $a \in A^{\times}$ et $b$ de valuation $0$ ou $1$.
La fibre sp\'eciale de $Y/A$ est soit

(ii.a)  une $\kappa$-courbe propre, lisse, g\'eom\'etriquement connexe de genre z\'ero 

ou 

(ii.b)  une $\kappa$-courbe int\`egre qui sur une extension au plus quadratique $\lambda/\kappa$
est la r\'eunion disjointe de deux droites  se coupant transversalement en
un $\kappa$-point.

(iii) Soit $\alpha \in \Br(K)\{l\}$ avec $l \in A^{\times}$. Supposons que 
l'image de $\alpha$ dans $\Br(C)$ appartient \`a $\Br(Y) \subset \Br(C)$.
Alors le r\'esidu $\partial _{A}(\alpha)  \in H^1(\kappa,\Q_{l}/\Z_{l})$
est  nul dans le cas (ii.a), et appartient \`a $H^1(\lambda/\kappa,\Z/2)\hookrightarrow  \Z/2$
dans le cas (ii.b). Il est en particulier nul si $l\neq 2$.
\cqfd
\end{lem}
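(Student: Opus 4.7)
For part (i), I would pick a closed point $P \in C$ of degree at most $2$ (which always exists since the linear system of hyperplane sections on $C$ has degree $2$), and set $L := \kappa(P)$. Then $C_L \cong \P^1_L$, so $\Br(C_L) = \Br(L)$ by the classical computation for the projective line. A Hochschild--Serre / corestriction--restriction argument applied to the (at most) quadratic cover $C_L \to C$ then yields both the surjectivity of $\Br(K) \to \Br(C)$ and the fact that its kernel is killed by $2$; this kernel is classically generated by the quaternion class $[C] \in \Br(K)[2]$ associated with the conic.

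For part (ii), since $2 \in A^\times$, I would diagonalize the ternary quadratic form defining $C$ over $K$. Rescaling variables and dividing each coefficient by an even power of a uniformizer $\pi$, one writes the equation in the normalized form $x^2 - ay^2 - bt^2 = 0$ with $a \in A^\times$ and the valuation of $b$ equal to $0$ or $1$. If $b \in A^\times$, the scheme $Y \subset \P^2_A$ is $A$-smooth and its special fiber is the smooth geometrically integral plane conic $x^2 - \bar a y^2 - \bar b t^2 = 0$, giving case (ii.a). If $b = \pi b'$ with $b' \in A^\times$, the special fiber is $x^2 - \bar a y^2 = 0$, the union of two lines meeting transversally at the $\kappa$-point $P = (0{:}0{:}1)$, swapped by $\Gal(\lambda/\kappa)$ with $\lambda = \kappa(\sqrt{\bar a})$ of degree $\leq 2$ over $\kappa$. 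Regularity of $Y$ need only be checked at $P$: in the affine chart $t = 1$, the relation $\pi b' = x^2 - ay^2$ shows that $\pi \in (x,y)^2$ inside $\sO_{Y,P}$, so the maximal ideal of $\sO_{Y,P}$ is generated by $x$ and $y$; hence $\sO_{Y,P}$ is a regular local ring of dimension $2$.

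For part (iii), I would argue by purity on the two-dimensional regular scheme $Y$: the hypothesis $\alpha_C \in \Br(Y)$ forces all residues of $\alpha_{K(Y)}$ at the generic points of $Y_\kappa$ to vanish. Away from the node (and everywhere in case (ii.a)), the morphism $Y \to \Spec A$ is smooth, so the special fiber appears with multiplicity one in the divisor of $\pi$; equivalently, the ramification index of $A \hookrightarrow \sO_{Y,\eta}$ at any generic point $\eta$ of a component of $Y_\kappa$ equals $1$. Using the standard commutativity of
\[\begin{CD}
\Br(K)\{l\} @>{\partial_A}>> H^1(\kappa,\Q_l/\Z_l)\\
@VVV @VV{e\cdot\mathrm{res}}V\\
\Br(K(Y))\{l\} @>{\partial_\eta}>> H^1(\kappa(\eta),\Q_l/\Z_l)
\end{CD}\]
with $e = 1$, we obtain $\mathrm{res}(\partial_A \alpha) = 0$ in $H^1(\kappa(\eta),\Q_l/\Z_l)$. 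In case (ii.a), $Y_\kappa$ is geometrically integral, so $\kappa$ is algebraically closed in $\kappa(\eta)$ and the restriction is injective, giving $\partial_A \alpha = 0$. In case (ii.b), the algebraic closure of $\kappa$ inside $\kappa(\eta)$ is exactly $\lambda$ (the two components of $Y_\kappa \times_\kappa \lambda$ being swapped by $\Gal(\lambda/\kappa)$), so the kernel of $\mathrm{res}$ equals $H^1(\lambda/\kappa,\Q_l/\Z_l)$, a group isomorphic to $\Z/2$ when $l = 2$ and trivial otherwise. The main technical point is the verification that the ramification index is exactly $1$, which is what enables the clean identification of $\partial_A \alpha$ via residues on $K(Y)$.
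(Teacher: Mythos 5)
The paper offers no proof of this lemma: it is introduced by the sentence ``Le lemme suivant rassemble des r\'esultats bien connus'' and closed immediately with the end-of-proof symbol, the implicit references being classical (Witt for (i), standard arguments on regular models of conics for (ii) and (iii)). Your proposal therefore supplies the missing argument, and it is essentially the expected one; it is correct in structure, with two points that need tightening. In (i), restriction--corestriction along $C_L\to C$ only shows that the \emph{cokernel} of $\Br(K)\to\Br(C)$ is killed by $2$; genuine surjectivity requires carrying out the Hochschild--Serre argument you allude to, for which the degree-$2$ point must be chosen separable (a point to watch in characteristic $2$): one uses $H^1(\Gal(L/K),\Pic(C_L))=H^1(\Z/2,\Z)=0$ (trivial action, since the degree is Galois-invariant) together with the surjectivity of $\Br(K)\to \Br(L)^{\Gal(L/K)}$, which follows from $H^3(G,L^\times)\simeq H^1(G,L^\times)=0$ for $G$ cyclic --- or one simply invokes Witt's theorem, which is what the authors have in mind. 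In (ii), rescaling the variables only brings the three diagonal coefficients to valuations in $\{0,1\}$; in the case where \emph{two} of them have odd valuation you must in addition multiply the equation by a uniformizer and rescale two of the variables to reach the stated normal form with a single possibly non-unit coefficient --- your description skips this case.

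With these repairs the rest is correct and matches the standard treatment: regularity at the node via $\pi\in(x,y)^2$, hence $\mathfrak{m}_P=(x,y)$ in the two-dimensional local ring; and in (iii), the vanishing of $\partial_\eta$ on classes extending to $\Br(\sO_{Y,\eta})$, the ramification index $e=1$ at each generic point of the special fibre (smoothness of $Y\to\Spec A$ there, even in case (ii.b), since the fibre is generically smooth), the compatibility $\partial_\eta(\alpha_{K(Y)})=e\cdot\mathrm{res}(\partial_A\alpha)$, and the identification of $\Ker(\mathrm{res})$ with $H^1(\lambda/\kappa,\Q_l/\Z_l)$, using that the algebraic closure of $\kappa$ in $\kappa(\eta)$ is $\kappa$ in case (ii.a) and $\lambda$ in case (ii.b), where $\kappa(\eta)$ is a rational function field over $\lambda$. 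Note in passing that you invoke purity on $Y$, but only the trivial direction is needed: a class in the image of $\Br(\sO_{Y,\eta})$ has zero residue at $\eta$, which follows directly from the hypothesis $\alpha_C\in\Br(Y)$ by functoriality.
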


\begin{prop}\label{Tatemonteauxconiques}
Soient $\F$ un corps fini de caract\'eristique $p$ dif\-f\'e\-rente de $2$,  
et $ S,X$ des vari\'et\'es projectives, lisses, g\'eom\'etriquement connexes
sur $\F$, 
$X \to S $ un morphisme dominant de    fibre g\'en\'erique $X_{\eta}$ une conique lisse
sur le corps $\F(S)$.
Si la conjecture de Tate pour les diviseurs et  le nombre premier $l \neq p$ vaut pour $S$, alors elle vaut pour $X$.
\end{prop}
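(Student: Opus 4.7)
The plan is to deduce the Tate conjecture for divisors on $X$ at $l$ from that on $S$ by constructing an injection $\Br(X)\{l\} \hookrightarrow \Br(S)\{l\}$. By Remarque \ref{tateviaBrauer}, the Tate conjecture for divisors at a prime $l \ne p$ on a smooth projective $\F$-vari\'et\'e is equivalent to the finiteness of the $l$-primary Brauer group, and by Proposition \ref{diversesimplications} (b) (i) this property does not depend on the choice of $l \ne p$. I therefore replace $l$ by any odd prime distinct from $p$, reducing the statement to: the hypothesis that $\Br(S)\{l\}$ is finite forces $\Br(X)\{l\}$ to be finite.

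Set $K = \F(S)$ and $C = X_{\eta}$. By purity for the Brauer group of smooth varieties, $\Br(X)\{l\}$ embeds into $\Br(\F(X))\{l\} = \Br(K(C))\{l\}$, and its image lies in $\Br(C)\{l\}$. By Lemme \ref{coniques} (i), $\Br(K) \to \Br(C)$ is surjective with $2$-torsion kernel, whence it induces an isomorphism $\Br(K)\{l\} \iso \Br(C)\{l\}$ for $l$ odd. Composing, I obtain an injection $\varphi \colon \Br(X)\{l\} \hookrightarrow \Br(K)\{l\}$. The crux of the proof is to show that the image of $\varphi$ is contained in $\Br(S)\{l\}$, which, by purity for the smooth $\F$-vari\'et\'e $S$, consists of the classes in $\Br(K)\{l\}$ with trivial residue at every $v \in S^{(1)}$.

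Fix such $v$, set $A = \sO_{S,v}$, take $\alpha \in \Br(X)\{l\}$, and let $\beta = \varphi(\alpha)$. Since $X$ is smooth, the scheme $X_A := X \times_S \Spec A$ is regular and proper over $A$ with generic fibre $C$, i.e.\ a regular proper model of $C$ over the discrete valuation ring $A$; restricting $\alpha$ to $\Br(X_A)$, the image of $\beta$ in $\Br(C) \subset \Br(K(C))$ lies in $\Br(X_A)$. By Lemme \ref{coniques} (ii), $C$ also admits the explicit regular proper model $Y \subset \P^2_A$. Any two regular proper models of $C$ over $A$ admit a common refinement obtained by iterated blow-ups at regular closed points (Shafarevich--Lichtenbaum), and such blow-ups preserve the Brauer group by codimension-$2$ absolute purity (Gabber, for $l$ prime to the residue characteristic). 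Hence $\Br(X_A) = \Br(Y)$ as subgroups of $\Br(K(C))$, so the image of $\beta$ lies in $\Br(Y)$, and Lemme \ref{coniques} (iii), applicable because $l \ne 2$, yields $\partial_v(\beta) = 0$, as desired.

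The main obstacle is the control of residues of $\beta$ at singular fibres of the conic bundle, overcome via the explicit model $Y$ of Lemme \ref{coniques} together with the birational invariance of the Brauer group for regular arithmetic surfaces. The whole argument depends on the reduction to $l$ odd: the $2$-torsion kernel of $\Br(K) \to \Br(C)$, generated by the quaternion algebra of the generic fibre, and the possible non-vanishing of the mod-$2$ residue at singular fibres, together obstruct a direct argument at $l = 2$; this explains the role both of the hypothesis $p \ne 2$ and of Proposition \ref{diversesimplications} in the initial reduction.
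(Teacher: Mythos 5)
Your proof is correct and follows essentially the same route as the paper's: reduce to odd $l$ by the independence of $l$ in the Tate conjecture for divisors (Remarque \ref{tateviaBrauer}, Proposition \ref{diversesimplications} b) (i)), lift $\alpha$ to $\beta \in \Br(\F(S))$ via Lemme \ref{coniques} (i), compare the regular proper model $X\times_{S}\Spec \sO_{S,v}$ over each $A=\sO_{S,v}$ with the explicit model $Y$ of Lemme \ref{coniques} (ii) to place the class in $\Br(Y)$, and annihilate the residues by Lemme \ref{coniques} (iii) before invoking purity (\emph{cf.} \cite[Th\'eor\`eme (6.1)]{BrIII}) and the finiteness of $\Br(S)\{l\}$. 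The only differences are cosmetic: where the paper simply cites purity for two-dimensional regular schemes to identify the relevant Brauer groups of the two regular proper models, you justify this step via a common blow-up refinement (a legitimate, slightly more detailed filling of the same gap), and since $l$ is odd you conclude $\beta\in\Br(S)\{l\}$ directly, rather than, as the paper does, bounding the residues by $\Z/2$ on the finite set $T$ of bad codimension-one points.
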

\begin{proof}
 Soit $\alpha \in \Br(X)\{l\}$. 
 Soit $l$ un nombre premier impair. 
Sa restriction \`a $\Br(X_{\eta})$ est l'image
 d'un \'el\'ement $\beta \in \Br(\F(S))$. 
 Soit $x$ un point de codimension $1$ sur $S$, et soit $A=O_{S,x}$ l'anneau 
 de valuation discr\`ete d\'efini par son anneau local.
 Soit $Y/A$ un mod\`ele de $X_{\eta}/\F(S)$ comme dans le lemme \ref{coniques} (ii).
 Les $A$-sch\'emas r\'eguliers propres $Y/A$ et $X\times_{S}\Spec(A)$ 
  ont des fibres g\'en\'eriques isomorphes. Par la puret\'e du groupe
 de Brauer sur les sch\'emas r\'eguliers de dimension 2 \cite[Prop. 2.3]{BrII}, ceci implique que
 $\alpha \in \Br(X_{\eta})$ appartient \`a $\Br(Y)$. Il r\'esulte alors du lemme
 \ref{coniques} (iii) que  $\partial_{A}(\alpha)$ est nul si on est dans le cas (ii.a)
 et ne peut prendre que l'une de deux valeurs si on est dans le cas (ii.b).
 Partant d'une \'equation $x^2-ay^2-bt^2=0$ pour $X_{\eta}$ sur le corps
 $\F(S)$, on voit qu'il n'y a qu'un ensemble $T$  fini de points de codimension 1
 de $S$ o\`u l'on ne peut pas prendre un mod\`ele du type (ii.a).
D'apr\`es \cite[Th\'eor\`eme (6.1)]{BrIII}, on a la suite exacte
$$0 \to \Br(S)\{l\} \to \Br(\F(S)\{l\} \to \bigoplus_{x \in X^{(1)}}     H^1(\F(S),\Q_{l}/\Z_{l}).$$
La conjecture de Tate pour la surface $S$ dit que le groupe $ \Br(S)\{l\}$
est fini.  Le sous-groupe des \'el\'ements de $ \Br(\F(S)\{l\}$
dont les r\'esidus hors de $T$ sont nuls et dont les r\'esidus aux points de $T$
appartiennent au groupe fini $$H^1(\lambda/\kappa,\Z/2)\hookrightarrow  \Z/2$$
est donc fini. Ceci implique que $\Br(X)\{l\}$ est fini, et donc la conjecture de Tate
vaut pour les diviseurs sur $X$ et le nombre premier $l$.
  \end{proof}

\begin{rema}
 L'argument d\'evelopp\'e dans la proposition pr\'ec\'edente a une port\'ee plus
g\'en\'erale.  Soient $\F$ un corps fini de caract\'eristique $p$ dif\-f\'e\-rente de $2$,  
et $ S,X$ des vari\'et\'es projectives, lisses, g\'eom\'etriquement connexes
sur $\F$, puis
$X \to S $ un morphisme dominant de    fibre g\'en\'erique $V=X_{\eta}/\F(S)$
lisse et g\'eom\'etriquement int\`egre. Supposons que pour un premier  $l \neq p$
l'application 
$\Br(\F(S))\{l\}  \to \Br(V)\{l\}$
est surjective (c'est le cas par exemple si $V$ est une intersection compl\`ete lisse
de dimension au moins $3$ dans un espace projectif).
Supposons le groupe $\Br(S)\{l\}$ fini. En utilisant  \cite[Prop. (4.3)]{tate},
le th\'eor\`eme de puret\'e pour le groupe de Brauer, et le comportement
des r\'esidus d'un \'el\'ement de $\Br(\F(S))\{l\} $ sur $S$ par passage \`a $X$,
on montre que le groupe $\Br(X)$ est fini. 
\end{rema}

\begin{theo}\label{parasugeneralise}
Soient $\F$ un corps fini de caract\'eristique $p$ dif\-f\'e\-rente de $2$,  
et $C,S,X$ des vari\'et\'es projectives, lisses, g\'eom\'etriquement connexes
sur $\F$, de dimensions respectives $1,2,3$, \'equip\'ees de morphismes dominants
$X \to S \to C$, la fibre g\'en\'erique  de $X\to S$ \'etant une conique lisse, 
la fibre  g\'en\'erique  de $S \to C$ \'etant une courbe lisse g\'eom\'etriquement int\`egre.
Soient  $K=\F(C)$ et $V$  la $K$-surface fibre g\'en\'erique
de l'application compos\'ee $X \to C$.

Si  la conjecture de Tate pour les diviseurs vaut  sur la $\F$-surface $S$, alors, pour tout $l\neq p$,
les conjectures \ref{congenzerocycletotale} et \ref{congenzerocycle} valent pour la $K$-surface $V$.
\end{theo}

 \begin{proof}
 D'apr\`es la proposition \ref{Tatemonteauxconiques}, la conjecture de Tate  en codimension $1$  vaut pour $X$.
D'apr\`es le th\'eor\`eme \ref{H3parimalasuresh} (Parimala et Suresh),
on a  $H^3_\nr(X,\Q_{l}/\Z_{l}(2))=0$ pour tout premier $l \neq p$.
On peut alors appliquer le  th\'eor\`eme \ref{corollairebrauermaninF(C)}.
\end{proof}

\begin{cor} \label{pasulocglob}
Soient $\F$ un corps fini de caract\'eristique $p$ dif\-f\'e\-rente de $2$,  
et $C,S,X$ des vari\'et\'es projectives, lisses, g\'eom\'etriquement connexes
sur $\F$, de dimensions respectives $1,2,3$, \'equip\'ees de morphismes dominants
$X \to S \to C$, la fibre g\'en\'erique  de $X\to S$ \'etant une conique lisse
 et la fibre g\'en\'erique  de $S\to C$ \'etant une conique lisse.
  Alors pour tout $l\neq p$, les conjectures \ref{congenzerocycletotale} et \ref{congenzerocycle} valent pour $V/\F(C)$,
fibre g\'en\'erique de l'application compos\'ee $X \to C$.

\end{cor}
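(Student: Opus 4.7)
Le plan est de d\'eduire le corollaire directement du Th\'eor\`eme \ref{parasugeneralise} appliqu\'e au m\^eme tour $X \to S \to C$. Comme toute conique lisse est une courbe lisse g\'eom\'etriquement int\`egre (g\'eom\'etriquement isomorphe \`a $\P^1$), la seule hypoth\`ese suppl\'ementaire \`a v\'erifier est la conjecture de Tate pour les diviseurs sur la surface $S$.

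Pour \'etablir la conjecture de Tate en codimension $1$ pour $S$ au premier $l$, je rejouerais l'argument de la Proposition \ref{Tatemonteauxconiques} d'un \'etage plus bas, la fibration en coniques $S \to C$ jouant le r\^ole de $X \to S$. Concr\`etement, soit $\alpha \in \Br(S)\{l\}$, vu dans $\Br(S_\eta)$ o\`u $S_\eta/\F(C)$ est la conique g\'en\'erique; par le Lemme \ref{coniques}(i), son image se rel\`eve en un \'el\'ement $\beta \in \Br(\F(C))$. Pour tout point ferm\'e $v \in C^{(1)}$, la puret\'e du groupe de Brauer sur le sch\'ema r\'egulier de dimension $2$ qu'est $S \times_C \Spec \hat O_{C,v}$, combin\'ee au Lemme \ref{coniques}(ii)--(iii) appliqu\'e \`a un mod\`ele local de $S_\eta$ sur $\hat O_{C,v}$, montre que le r\'esidu $\partial_v(\beta) \in H^1(\F(v),\Q_l/\Z_l)$ est nul en dehors d'un ensemble fini $T \subset C^{(1)}$ de places de mauvaise r\'eduction, et appartient \`a un sous-groupe de $\Z/2$ aux points de $T$ (il s'annule totalement lorsque $l \neq 2$).

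En utilisant la suite exacte des r\'esidus
\[0 \to \Br(C)\{l\} \to \Br(\F(C))\{l\} \to \bigoplus_{v \in C^{(1)}} H^1(\F(v),\Q_l/\Z_l)\]
(\cite[Thm. (6.1)]{BrIII}) et la nullit\'e $\Br(C)=0$ valable pour toute courbe projective et lisse sur un corps fini (th\'eorie du corps de classes), le sous-groupe de $\Br(\F(C))\{l\}$ form\'e des classes dont les r\'esidus sont nuls en dehors de $T$ et appartiennent \`a un sous-groupe fini aux points de $T$ est lui-m\^eme fini. On en d\'eduit la finitude de $\Br(S)\{l\}$, c'est-\`a-dire la conjecture de Tate pour les diviseurs sur $S$ au premier $l$ (Remarque \ref{tateviaBrauer}). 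Le Th\'eor\`eme \ref{parasugeneralise} s'applique alors \`a $X \to S \to C$ et donne les Conjectures \ref{congenzerocycletotale} et \ref{congenzerocycle} pour $V/\F(C)$. La difficult\'e principale r\'eside dans l'adaptation de la Proposition \ref{Tatemonteauxconiques} \`a une base de dimension $1$, mais aucun obstacle essentiel n'appara\^\i t : la nullit\'e $\Br(C)=0$ remplace trivialement la finitude de $\Br$ de la base utilis\'ee dans cette proposition.
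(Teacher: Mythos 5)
Votre preuve est correcte, mais elle suit une route diff\'erente de celle du texte. Vous v\'erifiez frontalement l'hypoth\`ese du th\'eor\`eme \ref{parasugeneralise} --- la conjecture de Tate pour les diviseurs sur $S$ --- en rejouant l'argument de r\'esidus de la proposition \ref{Tatemonteauxconiques} un \'etage plus bas pour la fibration en coniques $S\to C$, la nullit\'e $\Br(C)=0$ (courbe projective lisse sur un corps fini) rempla\c{c}ant la finitude de $\Br(S)\{l\}$ dans le r\^ole de la base; cela donne m\^eme un peu plus que n\'ecessaire, \`a savoir $\Br(S)\{l\}=0$ pour $l$ impair et $\Br(S)\{2\}$ fini. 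Le texte proc\`ede au contraire g\'eom\'etriquement: par le th\'eor\`eme de Tsen, les deux fibrations en coniques se scindent g\'en\'eriquement apr\`es une extension finie $\F'/\F$, d'o\`u une application rationnelle dominante $\P^1\times\P^1\times C' \dashrightarrow X'$, et la conjecture de Tate pour les diviseurs (\'evidente sur $\P^1\times\P^1\times C'$) descend \`a $X$ par \cite[Prop. 5.2 (b)]{tate}; on conclut alors par le th\'eor\`eme \ref{parasugeneralise}. L'approche du texte est plus courte une fois admis le r\'esultat de descente de Tate et ne demande aucune nouvelle analyse de ramification; la v\^otre reste enti\`erement dans la bo\^ite \`a outils d\'ej\`a d\'evelopp\'ee (lemme \ref{coniques}, puret\'e en dimension $2$, remarque \ref{tateviaBrauer}), \'evite l'extension du corps de base et l'invocation externe, et fournit un contr\^ole explicite de $\Br(S)$. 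Deux points de r\'edaction \`a soigner: la surjectivit\'e de $\Br(K)\to\Br(S_\eta)$ du lemme \ref{coniques} (i) se transmet bien aux composantes $l$-primaires (le noyau \'etant de $2$-torsion), ce qui l\'egitime le rel\`evement $\beta$; et si vous travaillez sur $\hat O_{C,v}$ plut\^ot que sur $O_{C,v}$ (comme le fait la proposition \ref{Tatemonteauxconiques}), il faut observer que les r\'esidus sont compatibles \`a la compl\'etion --- ou simplement utiliser $O_{C,v}$, ce qui suffit.
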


 \begin{proof}
En utilisant les propri\'et\'es des coniques et le th\'eor\`eme de Tsen, on voit qu'il existe une extension finie
$\F'/\F$,  une courbe $C'$ sur $\F'$  et, sur $\F'$, une application rationnelle dominante
de  ${\mathbb P}^1 \times  {\mathbb P}^1 \times C' $ vers $X'=X\times_{\F}\F'$. 
D'apr\`es  \cite[Prop. 5.2 (b]{tate}, la conjecture de Tate pour  les diviseurs sur $X$ r\'esulte de
la m\^{e}me conjecture   sur ${\mathbb P}^1 \times  {\mathbb P}^1 \times C' $,  laquelle est \'evidente.
Il est clair que la $\F(C)$-vari\'et\'e poss\`ede un z\'ero-cycle de degr\'e~$4$.
L'\'enonc\'e d\'ecoule  alors du th\'eor\`eme \ref{parasugeneralise}.
\end{proof}

 \begin{rema}
La surface $V$ poss\`ede clairement un z\'ero-cycle de degr\'e~$4$.
La conjecture \ref{congenzerocycle}   s'\'ecrit donc ici :
S'il existe une famille $\{z_{v}\}_{v \in C^{(1)}}$ de z\'ero-cycles locaux de
degr\'e 1 telle que pour tout \'el\'ement  
$A \in \Br(V)$ d'ordre impair on ait $\sum_{v }    \inv_{v}(A(z_{v}))=0$,
 alors il existe un z\'ero-cycle de
degr\'e 1 sur~$V$. Cet \'enonc\'e 
est  d\'ej\`a obtenu  par Parimala et Suresh 
dans \cite{parimalasuresh}.
 \end{rema}

 \section{Principe local-global pour $H^3$ non ramifi\'e}\label{global}

 Soient $K$ un corps global, $\Omega$ l'ensemble de ses places.
 Pour $v \in \Omega$, on note $K_{v}$ le compl\'et\'e de $K$ en $v$.
On note $K_{s}$ une cl\^oture s\'eparable de $K$ et $g=\Gal(K_{s}/K)$.
 Pour toute $K$-vari\'et\'e   $V$, on note  $\ovV=V\times_{K}K_{s}$,
 et $V_{v}=V \times_{K}K_{v}$.

\subsection{Questions et conjectures en $K$-th\'eorie alg\'ebrique}\label{conjKtheorie}

Soient $F$ un corps, $F_{s}$ une cl\^oture s\'eparable de $k$,
puis $g=\Gal(F_{s}/F)$. 
Soit $V$ une $F$-vari\'et\'e lisse g\'eom\'etriquement int\`egre.
 On dispose de l'application $g$-\'equivariante d\'efinie par les symboles mod\'er\'es
$$K_{2}F_{s}(V)/K_{2}F_{s} \to \bigoplus_{x \in V^{(1)}}  F_{s}(x)^{\times}.$$
Comme dans la proposition \ref{avecbk}, on note :
$${\mathcal N}(V)  : = \Ker( H^2(g,K_{2}F_{s}(V)/K_{2}F_{s}) \to H^2(g, \bigoplus_{x \in V^{(1)}}  F_{s}(x)^{\times})).$$

Si maintenant $F$ est 
un corps global $K$, on note
$$\cyr{X}{\mathcal N}(V) := \Ker ({\mathcal N}(V) \to \prod_{v} {\mathcal N}(V_{v})).$$

{Soient  $V$ une $K$-vari\'et\'e projective, lisse, g\'eom\'etriquement connexe.

 \begin{conj}\label{questionfinieglobalK} 
 Le groupe $\cyr{X}{\mathcal N}(V)$ est fini. 
    \end{conj}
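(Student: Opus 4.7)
\medskip

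The plan is to use Proposition \ref{avecbk} as a bridge between $\mathcal N(V)$ and the kernel of the geometric restriction map on non-ramified cohomology, and then compare the global version with its local analogues. First I would apply Proposition \ref{avecbk} to $F = K$ and to each completion $F = K_v$, yielding exact sequences
\[H^3(F,\Q/\Z(2))\to \Ker(\varepsilon_{V_F})\to \mathcal N(V_F)\to H^4(F,\Q/\Z(2))\]
with $\varepsilon_{V_F}: H^3_\nr(V_F,\Q/\Z(2))\to H^3_\nr(\ovV,\Q/\Z(2))$. Stacking these into a vertical diagram and taking the $\cyr{X}$-kernel yields a four-term exact sequence whose outer terms are the Tate--Shafarevich groups
\[\cyr{X}^{\,i}(K,\Q/\Z(2)) := \Ker\Big(H^i(K,\Q/\Z(2))\to \prod_v H^i(K_v,\Q/\Z(2))\Big)\]
for $i=3,4$, and whose middle piece compares $\cyr{X}\mathcal N(V)$ with the $\cyr{X}$-kernel of $\Ker(\varepsilon_V)$.

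The next step would be to invoke Kato's Hasse principle in weight $2$ for global fields: $\cyr{X}^{\,3}(K,\Q/\Z(2))$ is finite (nonzero only because of real places when $\car K=0$) and $\cyr{X}^{\,4}(K,\Q/\Z(2))=0$. This reduces the problem to finiteness of the Tate--Shafarevich-type group
\[\cyr{X}^{\,3}_\nr(V) := \Ker\Big(H^3_\nr(V,\Q/\Z(2))\to \prod_v H^3_\nr(V_v,\Q/\Z(2))\Big),\]
modulo a bounded contribution from $\cyr{X}^{\,3}(K,\Q/\Z(2))$. I would then attack $\cyr{X}^{\,3}_\nr(V)$ via Theorem \ref{cdFleq1} and Remark \ref{r2.1}, which express $\Ker(\varepsilon_{V})$ in terms of Galois cohomology of $\bH^3_\et(\ovV,\Z(2))$ and of the descent cokernel $\Coker(CH^2(V)\to CH^2(\ovV)^g)$. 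Up to divisible subgroups, the Galois modules involved are the finite modules $\bigoplus_l H^3(\ovV,\Z_l(2))\{l\}$ appearing in Theorem \ref{CTRask}, so the comparison local--global reduces to standard Tate--Shafarevich statements for finite Galois modules over a global field.

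The main obstacle is twofold. First, in full generality $H^3_\nr(\ovV,\Q/\Z(2))$ is not known to be finite (Question \ref{questiongeom}), and without this input one cannot even uniformly bound $\cyr{X}^{\,3}_\nr(V)$; I therefore expect any attack to be conditional on the Tate and Beilinson conjectures applied, via Theorem \ref{copiekahn}, to an integral model of $V$ (a smooth projective threefold over $\F$ in the function field case, or a smooth projective model over a ring of integers in the number field case), and then pulled back to $K$. Second, when $K=\F(C)$ has positive characteristic $p$, handling the $p$-primary component forces a delicate treatment of logarithmic Hodge--Witt complexes $\nu_\infty(2)$ at the places of bad reduction; the local--global principle for weight-$2$ $p$-adic cohomology of global function fields is more fragile than its $\ell\neq p$ counterpart, and this is the step I expect to absorb most of the technical work.
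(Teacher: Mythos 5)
You should first be clear about the status of the statement: Conjecture \ref{questionfinieglobalK} is one of the \emph{open conjectures} of the paper, and the paper contains no proof of it — only an equivalence with another conjecture, a conditional reduction in positive characteristic, and an unconditional special case. Your first step is in fact exactly the paper's: applying Proposition \ref{avecbk} over $K$ and over each $K_v$, together with $H^4(K,\Q/\Z(2))=0$ and $H^3(K,\Q/\Z(2))\simeq \bigoplus_{v\in\Omega_{\R}}\Z/2$ (these come from Milne's duality theorems, not really from Kato's Hasse principle; in fact the global-to-local map on $H^3$ is an isomorphism, so your $\cyr{X}^{\,3}(K,\Q/\Z(2))$ is zero), is precisely the content of Proposition \ref{troisdef}, which yields the exact sequence $0\to A\to \cyr{X}H^3_{\nr}(V,\Q/\Z(2))\to \cyr{X}{\mathcal N}(V)\to 0$ with $A=(\Z/2)^s$ finite. (Note that extracting this requires a diagram chase, not just "taking $\cyr{X}$-kernels", which does not preserve exactness in general.) At that point you have reduced the conjecture to the equivalent Conjecture \ref{questionfinieglobal}, which is exactly where the paper leaves matters.

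The genuine gaps come next. Theorem \ref{cdFleq1} and Remark \ref{r2.1} are proved under the hypothesis that $F$ is perfect of cohomological dimension $\le 1$: the Hochschild--Serre argument kills all $E_2^{p,q}$ with $p\ge 2$. A global field and its non-archimedean completions have cohomological dimension $\ge 2$, so these results simply do not apply to $\varepsilon_V$ over $K$ or $K_v$; there is no closed formula for $\Ker(\varepsilon_V)$ there, the extra terms $H^2(g,K_2$-type$)$ being exactly what one cannot control — which is why the paper's descent machinery (\S\ref{desgal}--\ref{varfinies}) is deployed only over finite fields. Consequently nothing reduces to "standard Tate--Shafarevich statements for finite Galois modules": the divisible part of $\bH^3_{\et}(\ovV,\Z(2))$ and the cokernel of $CH^2(V)\to CH^2(\ovV)^g$ are not under control over $K$, and $H^3_\nr(\ovV,\Q/\Z(2))$ is not known to be finite (Question \ref{questiongeom}). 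Your conditional fallback is sound only in positive characteristic: for $K=\F(C)$ one has a smooth projective model $X\to C$, the elementary residue argument of Proposition \ref{passageH3gen} embeds $\cyr{X}H^3_{\nr}(V,\Q_l/\Z_l(2))$ into $H^3_{\nr}(X,\Q_l/\Z_l(2))$, and finiteness of the latter follows from Tate plus Beilinson via Theorem \ref{copiekahn} (with unconditional vanishing for conic-bundle threefolds by Theorem \ref{H3parimalasuresh}); this is the route the paper indicates. Over a number field, however, there is in general no smooth projective model over the ring of integers, and Theorem \ref{copiekahn} concerns varieties over a finite field, not arithmetic schemes over the spectrum of a ring of integers, so that branch of your plan has no support. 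In sum, your proposal reproduces the paper's reduction (Proposition \ref{troisdef}) and its conditional positive-characteristic statement, but the two steps you add beyond them — the use of Theorem \ref{cdFleq1} over $K$, and the number-field model argument — fail as stated, and no proof of the conjecture results.
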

    
     \begin{qn} \label{questiondim3geomglobalK} 
 Si $\dim V=2$,  le groupe  $\cyr{X}{\mathcal N}(V)$ est-il nul  ?
\end{qn}

\begin{conj}
\label{conjsurfregtglobalK} 
  Si $V$ est une  $K$-surface g\'eom\'etri\-quement r\'egl\'ee,  
   le groupe  $\cyr{X}{\mathcal N}(V)$ est nul.
\end{conj}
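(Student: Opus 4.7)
The plan is to globalize the exact sequence of Proposition~\ref{avecbk}: applying it to $V/K$ and to each $V_v/K_v$ and comparing via the natural restriction maps, one obtains a four-term sequence of local-global obstruction groups
\begin{equation*}
\cyr{X}^3(K,\Q/\Z(2)) \to \cyr{X}\bigl(H^3_\nr(V,\Q/\Z(2))\bigr) \to \cyr{X}{\mathcal N}(V) \to \cyr{X}^4(K,\Q/\Z(2)),
\end{equation*}
where $\cyr{X}^i(K,M)=\Ker(H^i(K,M)\to\prod_v H^i(K_v,M))$. I have used here that $H^3_\nr(\ovV,\Q/\Z(2))=0$, which holds by Proposition~\ref{ctssgros} applied to the surface $\ovV$, to identify $\Ker(\varepsilon)$ with $H^3_\nr(V,\Q/\Z(2))$ both globally and locally.

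The first step will be to dispose of the outer terms. Kato's theorems on higher-dimensional local-global principles (together with Poitou--Tate duality in weight two for the number-field case) should yield $\cyr{X}^3(K,\Q/\Z(2))=0$, and a parallel analysis, using that the cohomological dimension of $K$ for torsion modules is at most three, should show that $H^4(K,\Q/\Z(2))$ is locally detected so that $\cyr{X}^4(K,\Q/\Z(2))=0$ as well. Granting this, the problem reduces to the core vanishing $\cyr{X}\bigl(H^3_\nr(V,\Q/\Z(2))\bigr)=0$.

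For this core vanishing I would exploit the geometric ruling. After a finite Galois extension $L/K$, the surface $V_L$ is birational to $\bP^1_L\times C$ for some smooth projective curve $C/L$, and combining birational invariance of unramified cohomology with the projective bundle formula expresses $H^3_\nr(V_L,\Q/\Z(2))$ in terms of $H^3_\nr(C,\Q/\Z(2))$ and $\Br(C)$. For curves over global fields, both groups satisfy local-global principles: the statement for $\Br$ is classical global class field theory, and the analogous statement for $H^3$ is a consequence of Kato's theorems on two-dimensional arithmetic schemes. A Galois descent argument, whose obstruction is controlled by the finite group $\Gal(L/K)$, should then transfer this local-global vanishing back down to $V$ over $K$.

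The main obstacle, in my judgement, will be this last descent step, together with the passage from a merely geometric ruling to an arithmetic fibration over $K$ amenable to explicit residue computations. Concretely, one wishes to realize $V$ birationally as a conic (or Severi--Brauer) bundle over a $K$-curve $C_0$ and then imitate the method of Parimala--Suresh (Theorem~\ref{H3parimalasuresh}), with the arithmetic of each completion $K_v$ playing the role their argument assigned to the function field of a component of the surface base. This is directly parallel to Wittenberg's work over number fields (\cite{wittenberg}), which required the finiteness of the Tate--Shafarevich group of the Jacobian of $C_0$, and one should expect to need an analogous hypothesis here --- yielding, most likely, a conditional statement of the kind that is typical for local-global principles for zero-cycles.
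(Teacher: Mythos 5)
You should first be aware that the statement you set out to prove is, in the paper, an \emph{open conjecture} (modelled on the Conjecture B of \cite{cts81}): the paper contains no proof of Conjecture \ref{conjsurfregtglobalK}, only special cases. Concretely, combining Proposition \ref{troisdef}, Proposition \ref{passageH3gen} and the theorem of Parimala--Suresh (Théorème \ref{H3parimalasuresh}), the paper obtains $\cyr{X}{\mathcal N}(V)=0$ unconditionally when $K=\F(C)$ is a global field of odd positive characteristic and $V$ is the generic fibre of a conic-bundle threefold $X\to S\to C$ --- the mechanism there being that a class in $\cyr{X}H^3_\nr(V,\Q_l/\Z_l(2))$ has all its residues zero on the integral model $X$, hence lies in $H^3_\nr(X,\Q_l/\Z_l(2))=0$; over number fields the paper only records (Remarque \ref{shaNVquestion}) that the known analogues require the finiteness of the Tate--Shafarevich group of the Jacobian of the base curve. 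There is thus no proof in the paper to compare with, and your proposal must be judged as an attempt at an open problem.

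As such it contains a genuine gap, and the gap sits exactly where the whole difficulty of the conjecture lies. Your first reduction is sound and is in substance Proposition \ref{troisdef} of the paper, except for one slip: exactness of your four-term sequence at $\cyr{X}\bigl(H^3_\nr(V,\Q/\Z(2))\bigr)$ fails over number fields, since the kernel of $\cyr{X}H^3_\nr(V,\Q/\Z(2))\to\cyr{X}{\mathcal N}(V)$ is a group $(\Z/2)^s$ coming from $H^3(K,\Q/\Z(2))\simeq \bigoplus_{v\in\Omega_\R}\Z/2$ at the real places with $V(K_v)=\emptyset$, and not the image of $\cyr{X}^3(K,\Q/\Z(2))=0$. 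The fatal step, however, is the descent from $L$ to $K$. By Proposition \ref{ctssgros} one has $H^3_\nr(\ovV,\Q/\Z(2))=0$ for \emph{every} smooth projective surface, so every class in $H^3_\nr(V,\Q/\Z(2))$ already dies over some finite extension of $K$; your vanishing of $\cyr{X}H^3_\nr(V_L,\Q/\Z(2))$ over a trivializing extension $L$ (via the ruling and Kato's Hasse principle \cite{katocrelle} for curves over global fields) therefore carries essentially no information about $V/K$. Restriction--corestriction only shows that $\cyr{X}H^3_\nr(V,\Q/\Z(2))$ is killed by $[L:K]$; for a nontrivial conic bundle any extension trivializing the ruling has even degree, while the relevant classes are $2$-primary, so nothing is gained. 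And the obstruction to descending the vanishing from $V_L$ to $V$ is, by Proposition \ref{avecbk}, measured by $H^2\bigl(g,K_2(F_s(V))/K_2(F_s)\bigr)$, i.e.\ by the group ${\mathcal N}(V)$ itself: asserting that this descent obstruction satisfies a local-global principle \emph{is} Conjecture \ref{conjsurfregtglobalK}, so your argument is circular at its decisive point. Your closing paragraph in effect concedes this: the route sketched there (realize $V$ as a conic bundle, imitate Parimala--Suresh, expect a Tate--Shafarevich-type hypothesis) leads only to the conditional or special-case statements already in the literature (\cite{wittenberg}, Salberger \cite{salberger1}, and the paper's own Théorème \ref{parasugeneralise}), not to the conjecture in its stated generality.
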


La conjecture \ref{conjsurfregtglobalK}  est inspir\'ee de \cite{cts81}, o\`u elle n'est formul\'ee
que pour les surfaces   rationnelles (conjecture B, op. cit.)

 Comme expliqu\'e dans \cite{cts81} (voir aussi la sous-section \ref{surfrat} ci-dessous), cette conjecture implique l'essentiel des   conjectures
locales-globales sur les z\'ero-cycles d'une telle surface faites dans \cite{cts81}.
 Pour $X$ une surface fibr\'ee en coniques sur la droite projective sur un corps de nombres,
 la conjecture B de  \cite{cts81}  fut \'etablie par Salberger \cite[Thm. (7.1)]{salberger1}. 
 
 \begin{rema}\label{shaNVquestion}

Des r\'esultats   sur les z\'ero-cycles,  analogues \`a ceux du  th\'eo\-r\`eme \ref{parasugeneralise},
ont \'et\'e obtenus pour les surfaces fibr\'ees en coniques sur une courbe de genre quelconque sur un corps
de nombres (le premier auteur, Frossard, van Hamel, Wittenberg, voir  \cite{wittenberg}).
Dans ces travaux, on fait l'hypoth\`ese que
le groupe de Tate-Shafarevich de la jacobienne de la courbe est fini. 
C'est l'analogue de l'hypoth\`ese que la conjecture de Tate vaut pour la surface $S$ dans le th\'eor\`eme
\ref{parasugeneralise}.

Il conviendrait de v\'erifier
que sans faire  cette hypoth\`ese les articles en question \'etablissent $\cyr{X}{\mathcal N}(V)=0$ 
pour toute telle  surface $V$ fibr\'ee en coniques au-desssus d'une courbe de genre quelconque, 
et donc  la conjecture \ref{conjsurfregtglobalK} pour de telles surfaces.
\end{rema}
 
\subsection{Traduction en termes de $H^3_{\nr}$}\label{traductionKH3}

Pour $V$ une vari\'et\'e projective, lisse, g\'eom\'etriquement connexe sur 
le corps global $K$, on note
$$\cyr{X}H^3_{\nr}(V,\Q/\Z(2)) : = \Ker (H^3_{\nr}(V,\Q/\Z(2)) \to \prod_{v} H^3_{\nr}(V_{v},\Q/\Z(2))).$$

On note
$$ \varepsilon : H^3_{\nr}(V,\Q/\Z(2)) \to H^3_{\nr}( \ovV,\Q/\Z(2)).$$
D\'efinissons  $\cyr{X}\left(\Ker(\varepsilon)\right)$ de la fa\c con \'evidente.

La fl\`eche naturelle  $\cyr{X}\left(\Ker(\varepsilon)\right) \to
\cyr{X}H^3_{\nr}(V,\Q/\Z(2))
$ est un isomorphisme. En effet, si $L$ est une extension r\'eguli\`ere de $K$,
toute classe dans $H^3_{\nr}(V,\Q/\Z(2))$ qui s'annule apr\`es extension
des scalaires de $K$ \`a $L$ s'annule par passage \`a $K_{s}$ 
\footnote{C'est une propri\'et\'e g\'en\'erale des foncteurs commutant
aux limites inductives filtrantes.}.

Notons $\Omega_{\R}$ l'ensemble, \'eventuellement vide, des places r\'eelles de $K$.
On a  :
$$ H^3(K,\Q/\Z(2)) \iso \bigoplus_{v \in \Omega_{\R}} H^3(K_{v},\Q/\Z(2)) =  \bigoplus_{v \in
\Omega_{\R}} \Z/2,$$ et
$$ H^4(K,\Q/\Z(2)) \iso \bigoplus_{v \in \Omega_{\R}} H^4(K_{v},\Q/\Z(2)) =0.$$
 Pour la torsion premi\`ere \`a la caract\'eristique de $K$, voir
 \cite[Thm. I. 4.10]{milneduality}. Si $K$ est un corps de fonctions d'une variable sur
 un corps fini de carac\-t\'e\-ristique~$p>0$, on a $\nu_{n}(2)=0$ pour tout $n>0$
  et donc $H^{r}(K,\Z/p^n(2))=0$
 pour tout entier $r$.

De   la proposition \ref{avecbk}  
on d\'eduit alors :

\begin{prop}\label{troisdef}
Soit $K$ un corps global.
  On a une suite exacte
$$0 \to  A  \to \cyr{X}H^3_{\nr}(X,\Q/\Z(2)) \to \cyr{X}{\mathcal N}(V) \to 0,$$
o\`u  $A=(\Z/2)^s$,
    avec   $s$
    au plus \'egal au nombre de places r\'eelles de $K$ 
    pour lesquelles 
    l'ensemble $V(K_{v})$  des $K_{v}$-point de $V$ est  vide.
\end{prop}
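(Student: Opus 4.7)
The plan is to apply Proposition \ref{avecbk} simultaneously to $V$ over $K$ and to each $V_v$ over $K_v$, then assemble the resulting four-term exact sequences into a commutative ladder and run the snake lemma. Using the vanishings $H^4(K,\Q/\Z(2))=0=H^4(K_v,\Q/\Z(2))$ recalled just before the statement, we introduce $Z_K:=\mathrm{Im}(H^3(K,\Q/\Z(2))\to\Ker(\varepsilon))$ and the analogous $Z_v$ at each place, turning the ladder into a diagram with exact rows
\[\begin{CD}
0 @>>> Z_K @>>> \Ker(\varepsilon) @>>> \mathcal{N}(V) @>>> 0\\
@. @VV{\phi_Z}V @VV{\phi_K}V @VV{\phi_{\mathcal{N}}}V @.\\
0 @>>> \prod_v Z_v @>>> \prod_v \Ker(\varepsilon_{K_v}) @>>> \prod_v \mathcal{N}(V_v) @>>> 0
\end{CD}\]
The isomorphism $\cyr{X}(\Ker(\varepsilon))\iso \cyr{X}H^3_\nr(V,\Q/\Z(2))$ noted in the excerpt identifies $\Ker(\phi_K)$ with $\cyr{X}H^3_\nr(V,\Q/\Z(2))$, while $\Ker(\phi_{\mathcal{N}})=\cyr{X}\mathcal{N}(V)$ by definition.

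The first substantive step is to prove that $\phi_Z$ is surjective. By the formulas recalled in the excerpt, the restriction map $H^3(K,\Q/\Z(2))\to\prod_v H^3(K_v,\Q/\Z(2))$ is an isomorphism (only real places contribute, and on both sides one gets $(\Z/2)^{|\Omega_\R|}$). A direct diagram chase then lifts any $(z_v)\in\prod_v Z_v$ to $(a_v)\in\prod_v H^3(K_v,\Q/\Z(2))$, transports it to a unique $a\in H^3(K,\Q/\Z(2))$, and sends it to the required preimage in $Z_K$ by commutativity. Consequently $\Coker(\phi_Z)=0$ and the snake sequence collapses to the desired short exact sequence
\[0\to\Ker(\phi_Z)\to\cyr{X}H^3_\nr(V,\Q/\Z(2))\to\cyr{X}\mathcal{N}(V)\to 0.\]

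It remains to identify $A:=\Ker(\phi_Z)$. Applying the snake lemma to the square
\[\begin{CD}
0 @>>> \Ker(H^3(K)\to Z_K) @>>> H^3(K,\Q/\Z(2)) @>>> Z_K @>>> 0\\
@. @VVV @VV{\simeq}V @VV{\phi_Z}V @.\\
0 @>>> \prod_v\Ker(H^3(K_v)\to Z_v) @>>> \prod_v H^3(K_v,\Q/\Z(2)) @>>> \prod_v Z_v @>>> 0
\end{CD}\]
whose middle vertical map is an isomorphism, shows that $\Ker(\phi_Z)$ is a quotient of $\prod_v \Ker\bigl(H^3(K_v,\Q/\Z(2))\to Z_v\bigr)$. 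At non-archimedean and complex places, $H^3(K_v,\Q/\Z(2))=0$, so those factors vanish. At a real place $v$ with $V(K_v)\neq\emptyset$, pullback along a rational point supplies a retraction of $H^3(K_v,\Q/\Z(2))\to H^3_\nr(V_v,\Q/\Z(2))$, forcing the corresponding factor to vanish as well. Hence $A$ is a quotient of $(\Z/2)^{s'}$, where $s'=\#\{v\in\Omega_\R:V(K_v)=\emptyset\}$, so $A\cong(\Z/2)^s$ with $s\le s'$, as claimed.

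The main obstacle is essentially careful bookkeeping: one must not conflate the ordinary kernel $\Ker(\varepsilon)\subset H^3_\nr(V,\Q/\Z(2))$ with the Shafarevich--Tate-style group $\cyr{X}H^3_\nr$, and the crucial ingredient allowing the transition is the isomorphism $\cyr{X}(\Ker(\varepsilon))\iso\cyr{X}H^3_\nr$ recalled above. The identification $\phi_Z$ surjective, reducing the snake sequence to a short exact one, is what makes the whole argument work. In positive characteristic there are no real places and $H^3(K,\Q/\Z(2))$ vanishes, so in that case $A=0$ and the conclusion reads simply $\cyr{X}H^3_\nr(V,\Q/\Z(2))\iso\cyr{X}\mathcal{N}(V)$.
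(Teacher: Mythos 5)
Your proof is correct and follows exactly the route the paper intends: Proposition \ref{troisdef} is stated there as an immediate consequence of Proposition \ref{avecbk} together with the displayed computations $H^3(K,\Q/\Z(2))\iso\bigoplus_{v\in\Omega_{\R}}H^3(K_v,\Q/\Z(2))$, $H^4(K,\Q/\Z(2))=0$, and the identification $\cyr{X}\left(\Ker(\varepsilon)\right)\iso\cyr{X}H^3_{\nr}(V,\Q/\Z(2))$, and your ladder-plus-snake argument, with the retraction at real places carrying a $K_v$-point, is precisely that deduction written out in full.
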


\begin{rema} Soit $K=\Q$. Soit $V/\Q$ avec $V(\R)=\emptyset$.
Le groupe $A$ vaut $\Z/2$ si et seulement si
$-1$ n'est pas une somme de 4 carr\'es dans $\Q(V)$
mais $-1$ est une somme de 4 carr\'es dans $\R(V)$.
Si  $V$ est  une surface, alors
  $-1$ est une somme de 4 carr\'es dans $\R(V)$
(Pfister) et $-1$  une somme de
 $8$ carr\'es dans $\Q(V)$ (Jannsen et le premier auteur).
C'est   un probl\`eme ouvert de savoir
s'il existe une telle surface $V$  sur $\Q$ pour laquelle 
$-1$ n'est pas une somme de 4 carr\'es dans $\Q(V)$.
Voir \`a ce sujet  \cite{jannsensujatha}.
\end{rema}

Soit $V$ une $K$-vari\'et\'e   projective, lisse,
g\'e\-o\-m\'e\-tri\-que\-ment connexe.
D'apr\`es la proposition \ref{troisdef}, les
 questions et conjectures du paragraphe \ref{conjKtheorie}
 sont \'equiva\-lentes aux questions et conjectures suivantes.

  \begin{conj}\label{questionfinieglobal}Le groupe $\cyr{X}H^3_{nr}(V,\Q/\Z(2))$ est fini.
    \end{conj}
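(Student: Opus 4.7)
The plan is to deduce the conjecture as an immediate translation of the $K$-theoretic formulation. By Proposition \ref{troisdef}, one has the exact sequence
\[0 \to A \to \cyr{X}H^3_{\nr}(V,\Q/\Z(2)) \to \cyr{X}{\mathcal N}(V) \to 0,\]
where $A \subset (\Z/2)^{|\Omega_{\R}|}$ is \emph{a priori} finite (bounded by the number of real places of $K$). Consequently, $\cyr{X}H^3_{\nr}(V,\Q/\Z(2))$ is finite if and only if $\cyr{X}{\mathcal N}(V)$ is finite, i.e.\ if and only if Conjecture \ref{questionfinieglobalK} holds. The present conjecture therefore contains no content beyond that equivalence plus the elementary observation that $A$ is finite; the real work consists in proving Conjecture \ref{questionfinieglobalK}.

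For the $K$-theoretic statement the natural line of attack would mirror the function-field arguments of \S \ref{globalpositif}. First I would choose a proper regular model $\sX$ of $V$ over the appropriate Dedekind-type base (a smooth projective curve $C$ over $\F$ in the function-field case, or $\Spec \sO_K$ in the number-field case), so that the local cohomological data defining $\cyr{X}{\mathcal N}(V)$ can be interpreted in terms of the global object $\sX$. One then combines the Hochschild--Serre spectral sequence and the Bloch--Ogus machinery (as in the proof of Proposition \ref{avecbk} and Theorem \ref{cdFleq1}) with the finiteness of $H^3_\nr(\sX,\Q/\Z(2))$ on the model: over $\F(C)$ this is provided, for the classes covered, by the Parimala--Suresh theorem \ref{H3parimalasuresh} combined with Theorem \ref{corollairebrauermaninF(C)}; over a number field it should be supplied by the Brauer--Manin-type results of Salberger, Frossard, van Hamel and Wittenberg alluded to in Remark \ref{shaNVquestion}, under the customary hypothesis on Tate--Shafarevich groups.

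The hard part will, of course, be to carry this through for an arbitrary smooth projective $V$. One obstacle is that a nice proper regular model $\sX$ with a fibration structure making the Brauer or $H^3_\nr$ data tractable is generally not available; another is that the relevant Tate-type input on the model is itself conjectural. Thus, while the reduction step (Proposition \ref{troisdef}) is immediate, a general proof of Conjecture \ref{questionfinieglobal} appears to be essentially as deep as the most general local-global conjectures for zero-cycles, and for now one can only hope to verify it for specific classes such as geometrically rational surfaces or conic bundles, by importing the known conditional results cited above.
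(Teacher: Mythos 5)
You correctly recognize that this statement is an open conjecture: the paper offers no proof, and your reduction is exactly the paper's own move --- immediately before stating Conjecture \ref{questionfinieglobal}, the authors invoke Proposition \ref{troisdef} to declare it equivalent to Conjecture \ref{questionfinieglobalK}, the finiteness of $A=(\Z/2)^s$ making the two finiteness statements interchangeable, so on this point your attempt and the paper coincide. Where your sketch of supporting evidence diverges from the paper is in the mechanism over a global field of positive characteristic. The paper does not pass through Hochschild--Serre or Bloch--Ogus machinery on a model, nor through Theorem \ref{corollairebrauermaninF(C)}: its Proposition \ref{passageH3gen} is a one-line residue argument giving a direct inclusion $\cyr{X}H^3_{\nr}(V,\Q_{l}/\Z_{l}(2)) \hookrightarrow H^3_{\nr}(X,\Q_{l}/\Z_{l}(2))$ for $V$ the generic fibre of a fibration $X\to C$ (a class everywhere locally unramified has all its residues on $X$ zero), after which finiteness of the right-hand group --- Conjecture \ref{questionfinie}, known for $X\in B_\Tate(\F)$ by Theorem \ref{copiekahn}, and actually vanishing for conic-bundle threefolds by the Parimala--Suresh Theorem \ref{H3parimalasuresh} --- settles the conjecture for such $V$. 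Note in particular that Theorem \ref{corollairebrauermaninF(C)} is a \emph{consumer}, not a producer, of control on $H^3_\nr(X,\Q_l/\Z_l(2))$: it assumes divisibility of that group (together with Tate for divisors) in order to deduce local-global statements for zero-cycles, so it cannot supply the finiteness your plan asks of it; the correct inputs are Theorem \ref{copiekahn} (conditional on Tate--Beilinson) and Theorem \ref{H3parimalasuresh}. With that miscitation corrected, your overall assessment --- the reduction is immediate, the general case is as deep as the main conjectures of the subject, and verification is currently possible only for special fibred classes (with the number-field analogues of Remark \ref{shaNVquestion} conditional on Tate--Shafarevich finiteness) --- is precisely the position the paper itself takes.
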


 \begin{qn} \label{questiondim3geomglobal} 
Si $\dim V=2$, 
  le groupe $\cyr{X}H^3_{nr}(V,\Q/\Z)(2)$ est-il  fini d'exposant 2,
  nul si $V $ poss\`ede des $K_{v}$-points pour toute place r\'eelle $v$?
\end{qn}

\begin{conj}
\label{conjsurfregtglobal} 
Si $V$ est  une  $K$-surface g\'eom\'etri\-quement r\'egl\'ee,  
    le groupe $\cyr{X}H^3_{nr}(V,\Q/\Z)(2)$ est un groupe fini $A=(\Z/2)^s$,
    avec $s$ au plus \'egal au nombre de places r\'eelles de $K$ 
    telles que $V(K_{v})=\emptyset$.
\end{conj}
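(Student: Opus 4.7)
Plan de d\'emonstration pour la Conjecture \ref{conjsurfregtglobal}.

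La r\'eduction de principe est fournie par la Proposition \ref{troisdef}: on dispose d'une suite exacte
\[0 \to A \to \cyr{X}H^3_{\nr}(V,\Q/\Z(2)) \to \cyr{X}{\mathcal N}(V) \to 0\]
avec $A=(\Z/2)^s$ et $s$ major\'e par le nombre de places r\'eelles $v$ telles que $V(K_v)=\emptyset$. L'\'enonc\'e est donc \'equivalent \`a la Conjecture \ref{conjsurfregtglobalK}, \`a savoir $\cyr{X}{\mathcal N}(V)=0$ pour $V$ g\'eom\'etriquement r\'egl\'ee. Le premier gain est donc de d\'eplacer le probl\`eme en $K$-th\'eorie, o\`u les m\'ethodes birationnelles sont plus naturelles.

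Je proc\'ederais ensuite en trois \'etapes. D'abord, v\'erifier l'invariance $K$-birationnelle de $\cyr{X}{\mathcal N}(V)$: le foncteur $W \mapsto H^2(g, K_2(K_s(W))/K_2(K_s))$ ne d\'epend que du corps des fonctions, et l'annulation des r\'esidus mod\'er\'es aussi, apr\`es \'etude standard des \'eclatements. Comme $V$ est g\'eom\'etriquement r\'egl\'ee, elle est $K$-birationnelle \`a une fibration en coniques $f : X \to C$ au-dessus d'une $K$-courbe lisse g\'eom\'etriquement int\`egre $C$: sur $K_s$ on obtient $\P^1 \times \bar C$, et le descente cohomologique fournit sur $K$ une forme tordue qui est bien une fibration en coniques (\'eventuellement d\'eg\'en\'er\'ee en un nombre fini de points). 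Deuxi\`emement, on utilise la structure de fibration en coniques pour d\'ecouper un \'el\'ement $\alpha \in \cyr{X}{\mathcal N}(X)$ en une partie ``horizontale'' (contr\^ol\'ee par la K-th\'eorie de $K(C)$) et une partie ``verticale'' (contr\^ol\'ee par les fibres d\'eg\'en\'er\'ees), \`a la mani\`ere de Salberger \cite{salberger1}. Comme $\cyr{X}{\mathcal N}$ d'une conique lisse au-dessus de $K(C)$ se ram\`ene, par le lemme \ref{coniques}, au $H^3$ d'un corps global (donc trivial modulo la $2$-torsion r\'eelle), la partie horizontale dispara\^it. Troisi\`emement, pour contr\^oler la partie verticale, on applique les techniques de descente et d'approximation d\'evelopp\'ees pour les conjectures locales-globales sur les z\'ero-cycles dans \cite{wittenberg} (Frossard, van Hamel, Wittenberg pour une base de genre quelconque).

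La principale difficult\'e est l\`a: les r\'esultats cit\'es dans \cite{wittenberg} supposent la finitude du groupe de Tate-Shafarevich de la jacobienne de $C$, et il s'agit d'isoler ce qui, dans leur argument, est r\'eellement n\'ecessaire pour obtenir la nullit\'e de $\cyr{X}{\mathcal N}(V)$ (objet cohomologique) plut\^ot que la conjecture local-global pleine sur $CH_0$ (objet g\'eom\'etrique). Comme le sugg\`ere la remarque \ref{shaNVquestion} des auteurs, il est plausible que cette hypoth\`ese soit superflue pour le seul groupe $\cyr{X}{\mathcal N}(V)$, puisque dans la situation analogue sur un corps fini \`a la place d'un corps de nombres (Th\'eor\`eme \ref{parasugeneralise}), l'hypoth\`ese utilis\'ee est la conjecture de Tate pour une surface auxiliaire $S$, qui joue ici le r\^ole dualis\'e par Artin-Tate du groupe de Tate-Shafarevich. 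Le parall\`ele sugg\`ere qu'une r\'evision de \cite{wittenberg} dans l'esprit de la preuve du Th\'eor\`eme \ref{parasugeneralise}, en substituant l'analyse directe des $H^3$ non ramifi\'es au d\'etour par $CH_0$, devrait conduire \`a une preuve inconditionnelle de la conjecture, au moins pour les surfaces fibr\'ees en coniques sur une courbe de genre quelconque, et couvrir ainsi tous les cas g\'eom\'etriquement r\'egl\'es par invariance birationnelle.
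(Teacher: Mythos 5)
Le point essentiel \`a noter d'abord : l'\'enonc\'e \ref{conjsurfregtglobal} est une \emph{conjecture} du papier, pas un th\'eor\`eme ; les auteurs n'en donnent aucune d\'emonstration (ils \'etablissent seulement, via la proposition \ref{troisdef}, son \'equivalence avec la conjecture \ref{conjsurfregtglobalK}, et des cas tr\`es particuliers en caract\'eristique positive). Votre premi\`ere \'etape (la r\'eduction \`a $\cyr{X}{\mathcal N}(V)=0$) co\"{\i}ncide donc avec ce que fait le papier, mais la suite de votre plan ne constitue pas une preuve, et elle contient deux lacunes r\'eelles. La premi\`ere est la r\'eduction aux fibrations en coniques : il est faux qu'une surface g\'eom\'etriquement r\'egl\'ee soit $K$-birationnelle \`a une fibration en coniques au-dessus d'une courbe. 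La descente galoisienne du r\'eglage que vous invoquez ne fonctionne que lorsque la base a genre $\geq 1$ (unicit\'e de la fibration sur $K_s$) ; pour les surfaces g\'eom\'etriquement \emph{rationnelles} --- qui sont g\'eom\'etriquement r\'egl\'ees --- il existe des $K$-surfaces (par exemple des surfaces de del Pezzo de rang de Picard $1$ sur $K$) sans aucune structure de fibration en coniques sur $K$, et l'invariance birationnelle de $\cyr{X}{\mathcal N}$ ne transporte le r\'esultat qu'entre $K$-vari\'et\'es $K$-birationnelles, pas entre $K$-formes d'une m\^eme vari\'et\'e sur $K_s$. C'est pr\'ecis\'ement pourquoi la conjecture B de \cite{cts81} n'est connue, sur un corps de nombres, que pour les fibr\'es en coniques sur $\P^1$ (Salberger \cite{salberger1}), et pourquoi le papier souligne que le cas g\'eom\'etriquement rationnel reste ouvert.

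La seconde lacune est votre troisi\`eme \'etape : supprimer l'hypoth\`ese de finitude du groupe de Tate--Shafarevich de la jacobienne dans les travaux recens\'es par \cite{wittenberg} est exactement le probl\`eme ouvert que les auteurs formulent dans la remarque \ref{shaNVquestion} (``il conviendrait de v\'erifier...'') ; affirmer que cela ``devrait'' marcher par analogie n'est pas un argument. L'analogie avec le th\'eor\`eme \ref{parasugeneralise} joue d'ailleurs contre vous : en caract\'eristique positive, le r\'esultat est \emph{conditionnel} \`a la conjecture de Tate pour la surface $S$ (l'analogue de la finitude de Sha), et la nullit\'e inconditionnelle de $\cyr{X}{\mathcal N}(V)$ pour les fibr\'es en coniques n'y est obtenue que par le m\'ecanisme sp\'ecifique \`a $\F(C)$ : plongement $\cyr{X}H^3_{\nr}(V) \hookrightarrow H^3_{\nr}(X)$ pour un mod\`ele projectif lisse $X/\F$ (proposition \ref{passageH3gen}), puis annulation de Parimala--Suresh (th\'eor\`eme \ref{H3parimalasuresh}). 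Sur un corps de nombres, il n'existe ni mod\`ele propre et lisse sur un ``corps fini'' ni analogue du th\'eor\`eme de Parimala--Suresh, de sorte que ce m\'ecanisme ne se transporte pas. Votre plan reformule donc la conjecture et la question ouverte de la remarque \ref{shaNVquestion} plut\^ot qu'il ne les r\'esout.
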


\begin{prop}\label{sommedirecte}
Soit $V/K$ une surface projective et lisse sur un corps global $K$. Soit $l$ un premier distinct de la
caract\'eristique de $K$.
L'image de l'application de restriction aux compl\'et\'es de $K$
$$H^3_\nr(V,\Q_{l}/\Z_{l}(2)) \to \prod_{v \in \Omega} H^3_\nr(V_{v},\Q_{l}/\Z_{l}(2))$$ 
appartient \`a  la somme directe $ \displaystyle\bigoplus_{v \in \Omega} H^3_\nr(V_{v},\Q_{l}/\Z_{l}(2))$.
\end{prop}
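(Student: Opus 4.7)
My plan is to show, for each class $\alpha \in H^3_\nr(V,\Q_l/\Z_l(2))$, that $\alpha_v=0$ in $H^3_\nr(V_v,\Q_l/\Z_l(2))$ for all but finitely many $v$. By the Bloch--Kato theorem (recalled at the end of \S \ref{outils}), $H^3_\nr(V,\Q_l/\Z_l(2))$ is the union of the $H^3_\nr(V,\mu_{l^n}^{\otimes 2})$, so one may assume $\alpha \in H^3_\nr(V,\mu_{l^n}^{\otimes 2})$ for some $n\ge 1$.

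First, I spread $V/K$ to a smooth projective model $\mathcal V\to U$, where $U$ is a dense open in $\Spec O_K$ in the number field case, or in $C$ in the function field case, with $l$ invertible on $U$. Viewed as an element of $H^3(K(V),\mu_{l^n}^{\otimes 2})$ unramified at codimension $1$ points of $V$, the class $\alpha$ has at most finitely many nonzero residues at codimension $1$ points of $\mathcal V$; the horizontal ones vanish by hypothesis, so the only possibly nonzero residues occur at finitely many vertical components, i.e.\ at special fibres $\mathcal V_v$. Excluding these finitely many places from $U$, one obtains $\alpha \in H^3_\nr(\mathcal V,\mu_{l^n}^{\otimes 2})$.

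Next, for each $v \in U$, I consider $\mathcal V_v^h := \mathcal V\times_U\Spec O_v^h$, smooth projective over the Henselian trait $O_v^h$, with generic fibre $V_v$ and special fibre the smooth projective $k(v)$-surface $V_{k(v)}$. The class $\alpha$ restricts to $\alpha^h \in H^3_\nr(\mathcal V_v^h,\mu_{l^n}^{\otimes 2})$ and thence to $\alpha_v \in H^3_\nr(V_v,\mu_{l^n}^{\otimes 2})$. Since $\alpha^h$ is unramified at the generic point of the divisor $V_{k(v)} \subset \mathcal V_v^h$, it admits a well-defined reduction $\bar\alpha_v \in H^3(k(V_{k(v)}),\mu_{l^n}^{\otimes 2})$; by the Gersten conjecture for \'etale cohomology on smooth schemes over Dedekind bases (Bloch--Ogus, Gillet, Colliot-Th\'el\`ene--Hoobler--Kahn), $\bar\alpha_v$ automatically lies in $H^3_\nr(V_{k(v)},\mu_{l^n}^{\otimes 2})$, which vanishes by Proposition \ref{ctssgros}. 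Combined with proper--smooth base change for \'etale cohomology (which identifies $H^*_\et(\mathcal V_v^h,\mu_{l^n}^{\otimes 2})$ with $H^*_\et(V_{k(v)},\mu_{l^n}^{\otimes 2})$), this forces $\alpha^h=0$, hence $\alpha_v=0$.

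The main obstacle is the final deduction that $\bar\alpha_v=0$ implies $\alpha^h=0$: one needs the injectivity of the specialisation map $H^3_\nr(\mathcal V_v^h,\mu_{l^n}^{\otimes 2}) \to H^3_\nr(V_{k(v)},\mu_{l^n}^{\otimes 2})$. Point-wise this follows from smooth base change applied to the stalks of $\mathcal H^q(\mu_{l^n}^{\otimes 2})$ along $V_{k(v)} \hookrightarrow \mathcal V_v^h$; globalising to sections requires comparing the Bloch--Ogus spectral sequences of $\mathcal V_v^h$ and $V_{k(v)}$, both abutting via proper base change to isomorphic \'etale cohomology groups.
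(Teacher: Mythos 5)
Your first half coincides with the paper's proof: reduce to finite coefficients $\mu_{l^n}^{\otimes 2}$, spread $V$ out to a smooth projective model $\mathcal V\to U$, observe that the finitely many codimension-one points of $\mathcal V$ where the class ramifies lie over finitely many closed points of $U$, and shrink $U$ so that the class lies in $H^0(\mathcal V,{\mathcal H}^3(\mu_{l^n}^{\otimes 2}))$, hence restricts into $H^0(\mathcal V\times_O O_v,{\mathcal H}^3(\mu_{l^n}^{\otimes 2}))$ for every $v\in U$. At that point the paper concludes by quoting the theorem of Saito and Sato (\cite{saitosato2}, cf.\ \cite[th.~3.16]{ctbbki}): for a projective smooth relative surface over an excellent henselian discrete valuation ring with finite residue field, this group \emph{vanishes}. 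You instead try to deduce the local vanishing from Proposition~\ref{ctssgros} on the closed fibre via an injectivity of the specialisation map $H^3_\nr(\mathcal V_v^h,\mu_{l^n}^{\otimes 2})\to H^3_\nr(V_{k(v)},\mu_{l^n}^{\otimes 2})$, and this is where your argument has a genuine gap --- one you partly acknowledge, but whose proposed repair does not work.

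Concretely: (1) your appeal to ``the Gersten conjecture for \'etale cohomology on smooth schemes over Dedekind bases (Bloch--Ogus, Gillet, Colliot-Th\'el\`ene--Hoobler--Kahn)'' is a misattribution; those results establish the Gersten resolution for smooth varieties \emph{over a field}, and Gersten for \'etale cohomology over a mixed-characteristic discrete valuation ring was not available (this is precisely one reason the Saito--Sato theorem is hard). Without it there is no Bloch--Ogus spectral sequence with $E_2^{p,q}=H^p_\Zar(\mathcal V_v^h,{\mathcal H}^q)$ to compare. (2) Proper base change does identify $H^*_\et(\mathcal V_v^h,\mu_{l^n}^{\otimes 2})$ with $H^*_\et(V_{k(v)},\mu_{l^n}^{\otimes 2})$, but only as abutments; it gives no control of the coniveau filtrations on the two sides, hence no injection on the subquotients $H^0(-,{\mathcal H}^3)$. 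Your stalkwise smooth base change argument is vacuous here: at a strictly henselian local ring the degree-$3$ cohomology vanishes on both sides, so the entire content of the statement is global, and that global statement --- in fact the stronger vanishing $H^0(\mathcal V\times_O O_v,{\mathcal H}^3(\mu_{l^n}^{\otimes 2}))=0$ --- is exactly the theorem of Saito--Sato, proved by quite different means (cycle class maps and arithmetic duality over local fields, not a Gersten resolution over the base). So your reduction of the proposition to the local statement is correct and matches the paper, but the local statement itself cannot be obtained by the base-change/spectral-sequence comparison you sketch; you must invoke \cite{saitosato2} (or reprove it), as the paper does.
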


\begin{proof}  Soit ${\mathcal V}/U$ un mod\`ele projectif et lisse  de $V/K$ au-dessus d'un
ouvert $U={\rm Spec}(O)$ de l'anneau des entiers du corps de nombres $K$, 
ou d'une courbe lisse sur un corps fini, de corps des fractions
le corps global $K$ de caract\'eristique positive.
Soit $\xi \in H^3(k(V),\mu_{l^n}^{\otimes 2})$ une classe non ramifi\'ee sur $V$.
Les points  (en nombre fini) de codimension $1$   de $\mathcal{V}$ 
o\`u le r\'esidu de
$\xi$ est non nul sont situ\'es au-dessus d'un nombre fini de points ferm\'es de~$U$.
Quitte \`a remplacer $U$ par un ouvert non vide, on peut donc
supposer que $\xi$  appartient \`a $H^0({\mathcal V},{\mathcal H}^3(\mu_{l^n}^{\otimes 2}))$.
Pour tout $v \in U$,
son image $\xi_{v} \in H^3_\nr(V_{v},\mu_{l^n}^{\otimes 2})$ appartient donc
\`a $H^0({\mathcal V}\times_{O}O_{v},{\mathcal H}^3(\mu_{l^n}^{\otimes 2}))$.
Or S. Saito et K.~Sato  ont montr\'e  que pour une surface relative projective et lisse
sur un anneau de valuation discr\`ete hens\'elien  excellent, \`a corps r\'esiduel fini,
  ce dernier groupe est nul
(\cite{saitosato2},  cf. \cite[th. 3.16]{ctbbki}).
 \end{proof}

\subsection{Sur un corps global de caract\'eristique positive}

Sur un tel corps, le lien entre les conjectures et questions ci-dessus et celles du \S
\ref{exemplesquestions}  est fourni par la :

\begin{prop}\label{passageH3gen}
Soit $\F$ un corps fini, $C/\F$ une $\F$-courbe projective lisse g\'eom\'etriquement connexe,
$K=\F(C)$ son corps des fonctions, $X$ une $\F$-vari\'et\'e projective lisse g\'eom\'etriquement connexe,
$ f : X \to C$ un morphisme dominant \`a fibre g\'en\'erique lisse g\'eom\'etriquement int\`egre $V/K$.

Pour tout $l$ premier, $l \neq {\rm car.} \F$,
on a une inclusion naturelle
$$\cyr{X}H^3_{\nr}(V,\Q_{l}/\Z_{l}(2)) \hookrightarrow  H^3_{\nr}(X,\Q_{l}/\Z_{l}(2)).$$

\end{prop}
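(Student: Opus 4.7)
The plan is to exploit the identification $K(X)=K(V)$ coming from the dominance of $f$, which embeds both $H^3_\nr(V,\Q_{l}/\Z_{l}(2))$ and $H^3_\nr(X,\Q_{l}/\Z_{l}(2))$ as subgroups of the common Galois cohomology group $H^3(K(V),\Q_{l}/\Z_{l}(2))$. Since both are defined as kernels of residue maps, the claimed inclusion is automatically injective, and the whole content is the set-theoretic inclusion $\cyr{X}H^3_\nr(V,\Q_{l}/\Z_{l}(2))\subset H^3_\nr(X,\Q_{l}/\Z_{l}(2))$.

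First I would partition the codimension one points of $X$. Since $C$ is a smooth curve and $X$ is integral, $f:X\to C$ is flat, so a codimension one point of $X$ is either a codimension one point of the generic fibre $V$ or a generic point of an irreducible component of a closed fibre $X_{v}$ with $v\in C$ closed. For $\xi\in H^3_\nr(V,\Q_{l}/\Z_{l}(2))$ the residues at the horizontal codimension one points vanish by hypothesis, so it suffices to control the residues at the vertical ones.

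Fix such a point $x$ lying on the fibre over $v$, and consider the base change $X^\wedge := X\times_{C}\Spec \hat{O}_{C,v}$. This is a regular scheme whose closed fibre still equals $X_{v}$ and whose generic fibre is $V_{v}=V\times_{K}K_{v}$; the point $x$ admits a unique lift $x^\wedge$ to $X^\wedge$, and the local extension $O_{X,x}\hookrightarrow O_{X^\wedge,x^\wedge}$ is a flat extension of discrete valuation rings with trivial ramification index and trivial residue field extension (completing the base at $v$ leaves the component through $x$ unchanged). Functoriality of the residue maps under such unramified extensions of DVRs identifies the residue of $\xi$ at $x$ with the residue at $x^\wedge$ of the image $\xi_{v}$ of $\xi$ in $H^3(K_{v}(V_{v}),\Q_{l}/\Z_{l}(2))$.

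To conclude, the hypothesis $\xi\in\cyr{X}H^3_\nr(V,\Q_{l}/\Z_{l}(2))$ says that $\xi_{v}$ is zero in $H^3_\nr(V_{v},\Q_{l}/\Z_{l}(2))$; since unramified cohomology injects into the Galois cohomology of the function field, $\xi_{v}$ vanishes in $H^3(K_{v}(V_{v}),\Q_{l}/\Z_{l}(2))$, hence so do all of its residues, in particular at $x^\wedge$. Thus the residue of $\xi$ at $x$ vanishes, $\xi$ is unramified at every codimension one point of $X$, and $\xi\in H^3_\nr(X,\Q_{l}/\Z_{l}(2))$. The only delicate step is the residue-compatibility under base change $O_{C,v}\to\hat{O}_{C,v}$; this is standard (Kato residues are functorial under flat DVR extensions with trivial ramification and residue extension), but should be invoked explicitly.
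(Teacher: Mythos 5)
Your proof is correct and follows exactly the paper's route: the paper's own proof is the one-sentence observation that a class in $\cyr{X}H^3_{\nr}(V,\Q_{l}/\Z_{l}(2))$, viewed in $H^3(K(V),\Q_{l}/\Z_{l}(2))=H^3(\F(X),\Q_{l}/\Z_{l}(2))$, has all its residues on $X$ zero and hence lies in $H^3_{\nr}(X,\Q_{l}/\Z_{l}(2))$. What you add --- the horizontal/vertical dichotomy and the check at vertical points via the base change $X\times_{C}\Spec \hat{O}_{C,v}$, using compatibility of residues for an extension of discrete valuation rings with ramification index $1$ and trivial residue field extension --- is precisely the verification the paper leaves implicit, and you carry it out correctly.
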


\begin{proof} Une classe dans $$H^3(K(V),\Q_{l}/\Z_{l}(2)) = H^3(\F(X),\Q_{l}/\Z_{l}(2))$$
qui est dans $\cyr{X}H^3_{\nr}(V,\Q_{l}/\Z_{l}(2))$  a tous ses r\'esidus nuls sur $X$. Elle appartient
donc \`a  $H^3_{\nr}(X,\Q_{l}/\Z_{l}(2))$.
\end{proof}

Ainsi :

(a)  La finitude de $ H^3_{\nr}(X,\Q_{l}/\Z_{l}(2))$ (conjecture \ref{questionfinie})
entra\^{\i}ne la finitude de   $\cyr{X}H^3_{\nr}(V,\Q_{l}/\Z_{l}(2))$ (conjecture \ref{questionfinieglobal})
et de $\cyr{X}{\mathcal N}(V)$ (conjecture \ref{questionfinieglobalK}).

(b) Pour  $X$ de dimension $3$, la nullit\'e de  $ H^3_{\nr}(X,\Q_{l}/\Z_{l}(2))$ (question \ref{questiondim3fini})
entra\^{\i}ne la nullit\'e de   $\cyr{X}H^3_{\nr}(V,\Q_{l}/\Z_{l}(2))$ (question \ref{questiondim3geomglobal})
et de $\cyr{X}{\mathcal N}(V)$ (question \ref{conjsurfregtglobalK}).

\medskip

Soient ${\mathcal X} \to S \to C$ des morphismes dominants de vari\'et\'es projectives lisses,
g\'eom\'etriquement connexes de dimensions respectives 3,2,1,  sur un corps fini
$\F$ de caract\'eristique $p$ impaire,  soit $K=\F(C)$ et $$X={\mathcal X}\times_{C} {\rm \Spec} K.$$
 Supposons que la fibre g\'en\'erique de $X\to S$ soit une conique. 
Pour $l \neq p$ un nombre premier, le th\'eor\`eme de Parimala et Suresh (th\'eor\`eme \ref{H3parimalasuresh} ci-dessus)
donne  $H^3_{\nr}(X,\Q_{l}/\Z_{l}(2))=0$.
Pour la surface $V/K=\F(C)$,  on a  donc $ \cyr{X}H^3_{\nr}(V,\Q_{l}/\Z_{l}(2))=0$
et   $\cyr{X}{\mathcal N}(V)=0$.  
On comparera ce r\'esultat \`a la remarque
\ref{shaNVquestion} ci-dessus.

 \subsection{Cas des surfaces rationnelles}\label{surfrat}\
 
 Soient $F$  un corps et $F_{s}$
 une cl\^{o}ture s\'eparable
  de $K$. Soit $g={\Gal}(F_{s}/F)$.
Soit $V$ une $F$-surface projective, lisse, g\'e\-o\-m\'e\-tri\-que\-ment rationnelle.
Le groupe de Picard $\Pic(V\times_{F}F_{s})$
est un $g$-r\'eseau autodual. On note $S$ le $K$-tore de groupe de caract\`eres le
$g$-r\'eseau  $\Pic(V\times_{F}F_{s})$. Soit $A_{0}(V)$ le groupe des z\'ero-cycles de degr\'e
z\'ero  sur $V$ modulo l'\'equivalence rationnelle. L'indice $I(V)$ de $V$ est le pgcd des 
degr\'es des points ferm\'es de $V$.

\begin{theo}[\protect{\cite{blochrat,cts81,ctHilbert90}}]\label{surfratavecpoint}
Supposons $I(V)=1$.
On a alors  une suite exacte naturelle :
$$ 0 \to A_{0}(V) \to H^1(F,S) \to {\mathcal N}(V) \to 0.$$
\end{theo}

Supposons maintenant que $F$ soit un corps global $K$.
L'application diagonale $H^1(K,S) \to \prod_{v} H^1(K_{v},S)$ a son image dans la somme
directe;  on note 
$$ \cyr{CH}^1(K,S) :=\Coker \Big(H^1(K,S) \to \bigoplus_{v} H^1(K_{v},S)\Big).$$

La dualit\'e de Tate-Nakayama pour le $K$-tore $S$
 donne une injection
$$   \cyr{CH}^1(K,S) \hookrightarrow \Hom(H^1(K,\Pic(\ovV)),\Q/\Z).$$

Pour une r\'ef\'erence r\'ecente pour ces faits sur les $K$-tores, voir \cite[Prop.
2.34]{bourqui}.

Le  th\'eor\`eme \ref{surfratavecpoint} donne donc naissance au diagramme commutatif de suites exactes :
\[\begin{CD}
 0 @>>>  A_{0}(V) @>>> H^1(K,S) @>>> {\mathcal N}(V) @>>> 0 \\
   &&        @VVV @VVV @VVV \\
  0 @>>> \displaystyle \bigoplus_{v}A_{0}(V_{v}) @>>> \displaystyle \bigoplus_{v}H^1(K_{v},S)
@>>> \displaystyle\bigoplus_{v}{\mathcal N}(V_{v}) @>>> 0.
  \end{CD}\]
  
La suite du serpent donne alors une suite exacte
$$0 \to \cyr{X}A_{0}(V) \to  \cyr{X}^1(K,S) \to  \cyr{X}{\mathcal N}(V) \to \cyr{CH}A_{0}(V) 
\to \cyr{CH}^1(k,S).$$
 On a donc :

\begin{prop}\label{equisurfrat}
Dans la situation ci-dessus, on a   \'equivalence entre les deux \'enonc\'es suivants
\begin{itemize}
\item[(a)] $  \cyr{X}{\mathcal N}(V)=0$ (Conjecture B de \cite{cts81}).
\item[(b)] L'injection  $ \cyr{X}A_{0}(V) \hookrightarrow  \cyr{X}^1(K,S)$ est un isomorphisme
et l'appli\-cation $\cyr{CH}A_{0}(V) \to \cyr{CH}^1(K,S)$ est injective.
\end{itemize}
\qed
\end{prop}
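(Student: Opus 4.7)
The plan is to read the equivalence directly off the six-term exact sequence
$$0 \to \cyr{X}A_{0}(V) \to  \cyr{X}^1(K,S) \to  \cyr{X}{\mathcal N}(V) \to \cyr{CH}A_{0}(V) \to \cyr{CH}^1(K,S)$$
displayed just before the statement, which is itself obtained by the snake lemma applied to the two horizontal short exact sequences (the first of which is Théorème \ref{surfratavecpoint} over $K$, the second its sum over all completions $K_v$) connected by the natural vertical restriction maps. Once this sequence is in hand, the proof of (a) $\Leftrightarrow$ (b) is pure diagram chase.

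For (a) $\Rightarrow$ (b): if $\cyr{X}{\mathcal N}(V)=0$, the exact sequence immediately forces $\cyr{X}A_{0}(V) \iso \cyr{X}^1(K,S)$ (the map is injective by exactness and surjective since its cokernel embeds into $\cyr{X}{\mathcal N}(V)=0$), and the map $\cyr{CH}A_{0}(V) \to \cyr{CH}^1(K,S)$ is injective, since its kernel is the image of $\cyr{X}{\mathcal N}(V)=0$.

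For (b) $\Rightarrow$ (a): surjectivity of $\cyr{X}A_{0}(V) \to \cyr{X}^1(K,S)$ means the preceding arrow $\cyr{X}^1(K,S) \to \cyr{X}{\mathcal N}(V)$ has image zero, so by exactness $\cyr{X}{\mathcal N}(V) \hookrightarrow \cyr{CH}A_{0}(V)$. But by hypothesis the next arrow $\cyr{CH}A_{0}(V) \to \cyr{CH}^1(K,S)$ is injective, while by exactness its kernel is precisely the image of $\cyr{X}{\mathcal N}(V)$; hence $\cyr{X}{\mathcal N}(V)=0$.

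The argument is essentially formal, so there is no real obstacle beyond justifying the snake lemma step: one has to verify that Théorème \ref{surfratavecpoint} is indeed functorial with respect to base change $K \hookrightarrow K_v$, so that the diagram displayed above the proposition commutes; but this is clear from the construction of the exact sequence (all the maps involved — cycle class, the forgetful map to $H^1(K,S)$, and tame symbols — are defined naturally). No further input is required.
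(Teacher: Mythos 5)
Your proof is correct and is essentially the paper's own: the paper derives the six-term snake-lemma sequence immediately before the statement and then asserts the proposition with no further argument (the \qed in the statement), the equivalence being exactly the diagram chase you spell out. Your added remark on the functoriality of Théorème \ref{surfratavecpoint} under the base changes $K\hookrightarrow K_v$ only makes explicit what the paper takes for granted.
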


L'\'enonc\'e (b)  se r\'einterpr\`ete comme l'existence d'une suite exacte de grou\-pes finis
$$ 0 \to \cyr{X}^1(K,S) \to A_{0}(X) \to \bigoplus_{v} A_{0}(X_{v}) \to \Hom(H^1(K,\Pic(\ovV)),\Q/\Z)$$
soit encore, compte tenu de l'isomorphisme $\Br(V)/\Br(K) \iso H^1(K,\Pic(\ovV))$,
$$ 0 \to \cyr{X}^1(K,S) \to A_{0}(X) \to \bigoplus_{v} A_{0}(X_{v})  \to \Hom(\Br(V)/\Br(K),\Q/\Z).$$

Lorsque $K$ est un corps de nombres, l'existence d'une telle suite exacte est la conjecture A de
\cite{cts81}. Qu'elle r\'esulte de la conjecture B est d\'ej\`a mentionn\'e dans \cite{cts81}.

Pour une surface $V$  fibr\'ee en coniques sur $\P^1_{K}$, lorsque $K$ est un corps de nombres,
cette conjecture A fut \'etablie par Salberger \cite{salberger1}.
Nous sommes maintenant en mesure de donner un \'enonc\'e analogue sur un corps global $K$
de caract\'eristique positive. On observera que la m\'ethode de d\'emonstration est fort  diff\'erente
de celle de Salberger sur un corps de nombres.

\begin{theo}
Soit $K=\F(C)$ le corps des fonctions d'une courbe projective, lisse, g\'eom\'etriquement connexe
sur un corps fini $\F$ de caract\'eristique diff\'erente de $2$. Soit $X$ une $\F$-vari\'et\'e
projective, lisse, g\'eom\'etriquement connexe, de dimension $3$, \'equip\'ee d'un morphisme $X
\to \P^1_{C}$ de fibre g\'en\'erique une conique. Soit $V/K$ la fibre g\'en\'erique de $X \to
C$, qui est une surface fibr\'ee en coniques au-dessus de $\P^1_{K}$, et donc
g\'eom\'etriquement rationnelle. Supposons $I(V)=1$. On a alors une suite exacte
de groupes de torsion 2-primaire :
$$ 0 \to \cyr{X}^1(K,S) \to A_{0}(V) \to \bigoplus_{v} A_{0}(V_{v})  \to \Hom(\Br(V)/\Br(K),\Q/\Z).$$
\end{theo}

\begin{proof} 
Pour $V/K$ comme ci-dessus, il est connu que les groupes
intervenant dans cette suite sont de torsion 2-primaire, et m\^eme de $2$-torsion  (voir \cite{cts81}).
D'apr\`es la proposition \ref{troisdef}, on a $$\cyr{X}H^3_{\nr}(X,\Q_{2}/\Z_{2}(2)) \iso \cyr{X}{\mathcal N}(V)\{2\}.$$
D'apr\`es la proposition \ref{passageH3gen}
on a $$\cyr{X}H^3_{\nr}(V,\Q_{2}/\Z_{2}(2)) \hookrightarrow  H^3_{\nr}({X},\Q_{2}/\Z_{2}(2)).$$
Le th\'eor\`eme \ref{H3parimalasuresh} (Parimala et Suresh) donne $ H^3_{\nr}({X},\Q_{2}/\Z_{2}(2))=0$.
On a donc $ \cyr{X}{\mathcal N}(V)\{2\}=0$.  La d\'emonstration de la proposition \ref{equisurfrat}
valant composante $l$-primaire par composante $l$-primaire, ceci permet de conclure.
\end{proof}

\end{document}